\DeclareSymbolFont{legacymaths}{OT1}{cmr}{m}{n}
\newtheorem{thm}{Theorem}[section]
\newtheorem{lem}[thm]{Lemma}
\newtheorem{cor}[thm]{Corollary}
\theoremstyle{definition}
\newtheorem{defn}[thm]{Definition}
\newtheorem{defn-rem}[thm]{Definition and Remark}
\newtheorem{conj}[thm]{Conjecture}
\newtheorem{rem}[thm]{Remark}
\numberwithin{equation}{section}
\def\X{{\mathbb X}}
\def\Y{{\mathbb Y}}
\def\P{{\mathbb P}}
\def\ds{\displaystyle}
\def\k{\Bbbk}
\DeclareMathAlphabet{\pazocal}{OMS}{zplm}{m}{n}
\newcommand{\Fa}{\mathcal{F}}
\newcommand{\Ga}{\mathcal{G}}
\newcommand{\Ha}{\mathcal{H}}
\newcommand{\Ca}{\mathcal{C}}
\newcommand{\La}{\mathcal{L}}
\newcommand{\Ma}{\mathcal{M}}
\newcommand{\Na}{\mathcal{N}}
\newcommand{\Ta}{\mathcal{T}}
\begin{document}

\title[The Waldschmidt constant of a standard $\k$-configuration in $\P^2$]{The Waldschmidt constant of a standard $\k$-configuration in $\P^2$}
\thanks{Last updated: \today}

\author[M.V. Catalisano]{Maria Virginia Catalisano}
\address[M.V. Catalisano]{Scuola Politecnica\\ Universit\`a di Genova, Genoa, Italy
}
\email{mariavirginia.catalisano@unige.it}

\author[G. Favacchio]{Giuseppe Favacchio}
\address[G. Favacchio]{Dipartimento di Ingegneria, Università degli studi di Palermo, Viale delle Scienze, 90100 Palermo, Italy}
\email{giuseppe.favacchio@unipa.it}

\author[E. Guardo]{Elena Guardo}
\address[E. Guardo]{Dipartimento di Matematica e Informatica\\
Universit\`a di  Catania, Catania, Italy}
\email{guardo@dmi.unict.it}

\author[Y.S. Shin]{Yong-Su Shin${}^{*}$}
\address[Y.S. Shin]{Department of Mathematics,
Sungshin Women's University, Seoul, 02844, Republic of Korea}
\email{ysshin@sungshin.ac.kr }
\thanks{${}^{*}$Corresponding author}

\keywords{Waldschmidt constant, $\k$-configurations, standard $\k$-configurations, symbolic powers.}
\subjclass[2010]{13A17, 14M05}

\begin{abstract} A $\k$-configuration of type $(d_1,\dots,d_s)$,  where $1\leqslant d_1 < \ldots < d_s $ are integers,  is a specific set of points in $\P^2$  that has a number of algebraic and  geometric properties. For example,  the graded Betti numbers and Hilbert functions of all $\k$-configurations in $\P^2$ are determined by the type $(d_1,\dots,d_s)$. However  the Waldschmidt constant of a $\k$-configuration in $\P^2$ of the same type may vary. 
In this paper, we find that the Waldschmidt constant of a $\k$-configuration in $\P^2$ of type $(d_1,\dots,d_s)$ with $d_1\ge s\ge 1$ is $s$.  Then we deal with  the Waldschmidt constants of  standard $\k$-configurations in $\P^2$ of type 
$(a)$, $(a,b)$, and $(a,b,c)$ with $a\ge 1$. In particular, we  prove that the Waldschmidt constant of a standard $\k$-configuration in $\P^2$ of type $(1,b,c)$ with $c\ge 2b+2$  does not depend on $c$.
\end{abstract}

\maketitle
\tableofcontents


\section{Introduction}

A set of points $\mathbb{X}$ in $\mathbb P^2$ is called a {\it $\k$-configuration} of
 type $(d_1,\ldots,d_s)$, where  $1\leqslant d_1 < \ldots < d_s $ are integers, when there exists a partition of $\mathbb X =\mathbb X_1\cup \cdots \cup \mathbb X_s$  and $s$ distinct lines $L_1, \ldots, L_s \subseteq \mathbb{P}^2$  such that, for each $i=1,\ldots,s$ we have $|\mathbb{X}_i|=d_i$, $\mathbb{X}_i \subseteq L_i$ and, for $i>1$, $L_{i}\cap (\mathbb{X}_1\cup \cdots \cup \mathbb{X}_{i-1}) =\emptyset$. The last condition forces a point in $\mathbb X$ to belong to the set $\mathbb X_i$ corresponding to the largest index of a line containing~it.

The $\k$-configurations  were introduced in the 1980s by Roberts and Roitman in \cite{RR} and extensively studied in the literature for their several interesting properties, see for instance  \cite{CGS:1,GGSV:2, GHS:1,GHS:2,GMS:1,GS:1}.

In 1995, Harima \cite{H} extended this definition to $\P^3$, and then in 2001 Geramita, Harima, and Shin  \cite{GHS:1,GHS:5}  generalized  the definition to $\P^n$. Moreover, Roberts and Roitman showed that all $\k$-configurations in $\P^2$ of type $(d_1,\dots,d_s)$ have the same Hilbert function, which is encoded in the type. This result was generalized again by Geramita, Harima, and Shin \cite[Corollary 3.7]{GHS:5} to show that all graded Betti numbers of the associated ideal of a $\k$-configuration in $\P^n$ depend on the type only. However, it should be noted that $\k$-configurations in $\P^n$  of the same type can have very different algebraic and geometric properties \cite{CHT,CH}.
In  \cite[Section 3.3]{CGS:1} the authors showed that two different $\k$-configurations of the same type may have different Waldschmidt constants. 
For an easy example, consider the following two $\k$-configurations $\X$ and $\Y$ in $\P^2$ of type $(1,2,3)$. 

\begin{figure}[ht]
\centering
\vskip -1.9cm
\definecolor{ttqqtt}{rgb}{0.2,0,0.2}
\begin{tikzpicture}[line cap=round,line join=round,>=triangle 45,x=1cm,y=1cm]
\clip(-14.13,5.03) rectangle (8.13,10.91);
\draw [line width=1pt] (-11.17,7.13)-- (-7.55,7.69);
\draw [line width=1pt] (-11.8,6.3)-- (-6.5,6.3);
\draw [line width=1pt] (-9.4,8.7)-- (-8.05,8.21);

\draw [line width=1pt] (-2,9)-- (-4.65,6.03);
\draw [line width=1pt] (-3,9)-- (-0.27,6.03);
\draw [line width=1pt] (-4.83,6.45)-- (-0.99,7.71);
\draw [line width=1pt] (-3.97,7.75)-- (-0.01,6.45);
\draw (-9.61,6.11) node[anchor=north west] {$\X$};
\draw (-2.7,6.13) node[anchor=north west] {$\Y$};

\draw (-12.5,6.6) node[anchor=north west] {$\La_3$};
\draw (-11.87,7.45) node[anchor=north west] {$\La_2$};
\draw (-10.1,9.05) node[anchor=north west] {$\La_1$};

 \draw (-5.36,6.3) node[anchor=north west] {$\Ma_1$};
\draw (-5.55,6.7) node[anchor=north west] {$\Ma_2$};
 \draw (-4.74,8.1) node[anchor=north west] {$\Ma_3$};

\begin{scriptsize}
\draw [fill=ttqqtt] (-8.7,8.45) circle (3.5pt);

\draw [fill=ttqqtt] (-10.127111939186168,7.291330749739156) circle (3.5pt);
\draw [fill=ttqqtt] (-8.54475808615293,7.536114771202862) circle (3.5pt);
\draw [fill=ttqqtt] (-10.95,6.3) circle (3.5pt);
\draw [fill=ttqqtt] (-9.11,6.3) circle (3.5pt);
\draw [fill=ttqqtt] (-7.41,6.3) circle (3.5pt);
\draw [fill=ttqqtt] (-2.4925650557620815,8.44795539033457) circle (3.5pt);
\draw [fill=ttqqtt] (-3.3089496389630537,7.532988517841406) circle (3.5pt);
\draw [fill=ttqqtt] (-1.623248453983267,7.5022153510367415) circle (3.5pt);
\draw [fill=ttqqtt] (-2.419420533782162,7.2409713873527295) circle (3.5pt);
\draw [fill=ttqqtt] (-4.045603659897343,6.7073800490961855) circle (3.5pt);
\draw [fill=ttqqtt] (-0.9352633886169351,6.753748082121721) circle (3.5pt);
\end{scriptsize}
\end{tikzpicture}
\vskip -.8cm
\caption{$\k$-configurations $\X$ and $\Y$ in $\P^2$ of type $(1,2,3)$}
\end{figure}
Then the Waldschmidt constants of $\X$ and $\Y$ are different, i.e., 
$$
\widehat\alpha(I_\X)=\frac{7}{3} \quad \text{and} \quad \widehat\alpha(I_\Y)=2,
$$
respectively (see \cite{CGS:1,GHMN}). 

\medskip

The {\em Waldschmidt constant} of a homogeneous ideal $I$ in $R=\k[x_0,x_1,\dots,x_n]$
was introduced in \cite{W} as
$$
\widehat \alpha(I)=\lim_{t\to\infty} \frac{\alpha(I^{(t)})}{t},
$$
where  $I^{(t)}$  is the $t$-th symbolic power of the ideal $I$, defined by $I^{(t)}=\bigcap_{P\in{\rm Ass}(I)}(I^t R_P\cap R)$, and  
$\alpha(I^{(t)})$ is   the least degree among all minimal homogeneous generators of $I^{(t)}$.
 In   \cite[Lemma 2.3.1]{BH}   it was proved that this limit exists. 
 
 Note that if $I_\X$ is the  ideal defining a set of distinct points $\X=\{P_1,\dots,P_s\}$   in $\P^n$ and $I_{P_i}$ is the  ideal of the point $P_i$, then the  $t$-th symbolic power of   $I_\X$ is  $I_{\X}^{(t)} =I^t_{P_1}\cap \cdots \cap I^t_{P_s}$ ,
 that is, $I_{\X}^{(t)}$ defines a
  {\em homogeneous set of fat points supported at $\X$},  
 denoted by $t\X$.
If $I_\X$ is the  ideal of a set of  points $\X$, instead of   \lq\lq Waldschmidt constant of $I_\X$\rq\rq , we
simply write \lq\lq Waldschmidt constant of $\X$\rq\rq.

A prolific line of research involves the study of the Waldschmidt constant of zero dimensional schemes in $\mathbb P^n$, see \cite{BCG^+,   CHHV, DFMS, DHNSST, DHST, DHST:2, HM, HMF, MSS, NCT} just to cite some papers. 


 In this paper we  show  many other cases where $\k$-configurations in $\P^2$ of the same type have different Waldschmidt constants,  and  we extend some results  found in \cite{CGS:1}.
In particular we focus on the so called 
 {\it standard $\k$-configurations} in $\mathbb P^2$, see Definition \ref{standardk},  
   and we find the Waldschmidt constants of all standard $\k$-configurations of type   $(a)$,  $(a, b)$ and $(a, b, c)$, except for type $(2, 3, 5)$, as summarized in Table \ref{T:20210916-101}.

The paper is structured as follows.

 In Section~\ref{preliminari} we recall some definitions and useful tools, in particular we prove,
in a more general context,  the existence of irreducible curves (see Lemma \ref{T:20210527-109}).
 In Section 
\ref{M20210723-1}  we describe a  method  to find the Waldschmidt constant of a set $\X$ of points, that works 
in particular when $\X$ is supported on some lines. 
 In Section 
\ref{d1ds}  we consider particular schemes with support on lines,  when the number of points on each line is bigger than the number of lines.  As an application, we find the Waldschmidt constants of  standard $\k$-configurations of type $(a)$ and, for $a>1$, of type  $(a,b)$. To complete the case $(a,b)$, we recall the result in {\cite[Proposition 3.3]{DHST:2}}.
In Section~\ref{S1bc}, we find the Waldschmidt constants of standard $\k$-configurations of type $(1,b,c)$. 
 In Section~\ref{Sabc}, we find the Waldschmidt constants of standard $\k$-configurations of type $(a,b,c)$, with $a >1$, except the type $(2,3,5)$.

To lighten the reading load,  the proofs of some theorems of Section 5, that are very similar to the  proofs of other theorems in the same section, can be found in the Appendix, where an interested reader will find all the details. 

\begin{table}[h] 
\renewcommand{\arraystretch}{2.1}
\[
\begin{array}[t]{|c|c|c|c|cccccccccccccccccc}
\hline
\text{The type of $\X$} & \text{Note} &\widehat\alpha(I_{\X})  & \text{From}    \\ \hline
(a)  & & 1 &  \text{Corollary~\ref{M20210802-1}}  \\  \hline
 (1,b)  & &  \dfrac{2b-1}{b} &  \text{Remark~\ref{Sa-ab}}  \\  \hline
 (a,b)  & \text{ $a\geq 2$   }& 2 &  \text{Corollary~\ref{M20210802-1}}  \\  \hline
(1,b,b+1)  & \text{$b$ even, $b\geq 4$   }&  \dfrac{9b-4}{3b} &  \text{Theorem~\ref{T:20210606-402}}  \\  \hline
(1,b,c)  & \text{$c$ even, $c\leq 2b-4$   }&  \dfrac{6b+3c-4}{2b+c} &   \text{Theorem~\ref{MMcpari}} \\  \hline
(1,b,c)  & \text{$c$ odd, $b+1 < c\leq 2b-3$   }&  \dfrac{6b+3c-7}{2b+c-1} &   \text{Theorem~\ref{MMcdispari}} \\  \hline
(1,b,2b-2)  & &  \dfrac{6b^2-14b+6}{2b^2-4b+1} &   \text{Theorem~\ref{M2b-2}} \\  \hline
(1,b,2b-1)  & &  \dfrac{6b^2-8b+1}{2b^2-2b} &   \text{Theorem~\ref{T:20210717}} \\  \hline

(1,b,2b)  & &  \dfrac{6b-5}{2b-1}   &  \text{Theorem~\ref{T:20210428-203}}  \\  \hline
(1,b,2b+1)  & &  \dfrac{6b^2-2b-3}{2b^2-1}   & \text{Theorem~\ref{T:20210618-203}}   \\  \hline
(1,b,c)  &\text{$c \geq 2b+2$   }&   \dfrac{3b-1}{b}  & \text{Theorem~\ref{T:20210617-419}}   \\  \hline
(2,3,4)  & &   \dfrac{17}{6}  &  \text{Theorem~\ref{M20210728-234}} \\  \hline
(2,3,5)  & &   \dfrac{17}{6}\leq \widehat\alpha(I_{\X})\le \dfrac{71}{24} &  \text{Remark~\ref{rem235}} \\  \hline
(2,3,c)  & c\ge 6 &   3  &  \text{Theorem~\ref{M20210730-2bc}} \\  \hline
(2,b,c)  & b\ge 4 &   3  &  \text{Theorem~\ref{M20210730-2bc}}\\  \hline
(a,b,c)  & a\ge 3 &   3  &  \text{Theorem~\ref{M20210730-abc}} \\  \hline
\end{array}
\]

\caption{The Waldschmidt constant of  standard $\k$-configurations of type $(a)$, $(a,b)$, $(a,b,c)$}
\label{T:20210916-101}
\end{table}

\medskip

{\bf Acknowledgements}. 
The first author wishes  to thank the hospitality of Universit\`a di Catania during an early stage of this work.  
The first author was  supported  by  Universit\`a degli Studi di Genova through the FRA (Fondi per la Ricerca di Ateneo) 2018.
The second author was partially supported by MIUR grant Dipartimenti di Eccellenza 2018-2022 (E11G18000350001).
The third author was supported by Universit\`a di Catania, Progetto Piaceri 2020/22, linea  Intervento 2. 
The first, second and third authors  are members of GNSAGA of INDAM. 
The last author was supported by a grant from Sungshin Women's University.
Our results were inspired by computations using CoCoa \cite {cocoa} and Macaulay2 \cite {M2}.


\section{Preliminaries} \label{preliminari}

We  work with sets of points in $\mathbb P^2=\mathbb P^2_{\k}$, where $\k$ is an algebraic closed field of characteristic zero. The ideal defining  a set of points $\X$ will be denoted by  $I_{\X}$.  We will refer to  $[I_{\X}]_d$  as the linear system of all the plane curves of degree $d$ containing $\X$, since this is, from a geometrical point of view, what the forms in $[I_{\X}]_d$ correspond to, and we simply write $\dim[I_{\X}]_d$ instead  of $\dim_\k [I_{\X}]_d$.

We recall the definition of  the Waldschmidt constant for an ideal (see for instance  \cite{BH}). 
\begin{defn}\label{d.WC}Let $\X$ be a subscheme of $\mathbb P^n$ defined by  the homogeneous ideal $I_{\X}\subseteq \k[\mathbb P^n]$. We denote by  $\alpha(I_{\X})$, the initial degree of $I_{\X}$, i.e.,  the least degree of  nonzero elements in $I_{\X}$. 
The Waldschmidt constant of  $I_{\X}$ is the following limit
    $$\widehat \alpha(I_{\X}) = \lim_{t\rightarrow \infty} \frac{\alpha(I_{\X}^{(t)})}{t} ,
$$
where $I_{\X}^{(t)}$ is the $t$-th symbolic power of $I_{\X}$.
\end{defn}

In this paper we willl work with   special sets of simple  distinct  points in $\mathbb{P}^2$. In this case 
$I_{\X}^{(t)}= I_{t\X}$, and we have the following useful lemma.

\begin{lem}\label{L:20210722-1} 
Let $\X$ be a set of simple  distinct  points in $\mathbb{P}^2$, and let
$I_\X$ be its ideal. Let  $\mu$ and $d$  be positive integers such that
the initial degree of the scheme 
$m\mu \X$ is $md$ for each integer $m >0  $. Then the Waldschmidt constant of $I_\X$ is 
$$ \widehat \alpha(I_\X) = \frac {d}{\mu}.
$$
\begin {proof} Since, by definition,
$\widehat \alpha(I_\X) = \lim_{t\rightarrow \infty} \frac{\alpha(I_\X^{(t)})}{t},  $
if we let $t=m\mu$, we have  $\alpha(I_\X^{(t)})=\alpha( I_{m\mu \X})=md$, and so
$$ \widehat \alpha(I_\X) = \frac {md}{m\mu}=\frac {d}{\mu}.
$$ \end{proof}
\end{lem}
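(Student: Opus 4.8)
The plan is to lean on the fact that, by \cite[Lemma 2.3.1]{BH}, the defining limit $\widehat\alpha(I_\X)=\lim_{t\to\infty}\alpha(I_\X^{(t)})/t$ is already known to exist. Once existence is granted, the value of the limit can be read off from any single cofinal subsequence of the indices $t$, and the hypothesis of the lemma hands us exactly such a subsequence, along which the quotient is constant.

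First I would recall that for a set of simple distinct points $\X$ in $\mathbb P^2$ one has $I_\X^{(t)}=I_{t\X}$, so that $\alpha(I_\X^{(t)})=\alpha(I_{t\X})$ is the least degree of a plane curve through every point of $\X$ with multiplicity at least $t$. Then I would restrict attention to the subsequence $t=m\mu$, $m=1,2,\dots$, which tends to infinity. By the standing hypothesis $\alpha(m\mu\X)=md$ for every integer $m>0$, hence
\[
\frac{\alpha(I_\X^{(m\mu)})}{m\mu}=\frac{md}{m\mu}=\frac{d}{\mu}
\]
is independent of $m$.

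Finally, since the limit over all $t$ exists, it must agree with the limit over this subsequence, which is the constant $d/\mu$; therefore $\widehat\alpha(I_\X)=d/\mu$. (If one wished to avoid citing the existence result, one could instead use that $t\mapsto\alpha(I_\X^{(t)})$ is subadditive, so $\widehat\alpha(I_\X)=\inf_t \alpha(I_\X^{(t)})/t$: the subsequence gives $\widehat\alpha(I_\X)\le d/\mu$, and subadditivity applied to $\alpha(I_\X^{(t)})$ gives the reverse inequality; but invoking \cite[Lemma 2.3.1]{BH} is the shortest route.)

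I do not expect a genuine obstacle: the entire content sits in the hypothesis $\alpha(m\mu\X)=md$ for all $m$, which is the statement that the function $t\mapsto\alpha(t\X)$ has a ``stable ray'' of slope $d/\mu$ through the origin, and this will be the thing verified case by case in the later sections. The lemma itself is just the bookkeeping observation that such a stable ray pins down the Waldschmidt constant.
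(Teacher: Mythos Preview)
Your argument is correct and is essentially identical to the paper's: both invoke the existence of the limit (the paper implicitly, you via \cite[Lemma 2.3.1]{BH}) and then evaluate it along the subsequence $t=m\mu$, where the quotient is the constant $d/\mu$. Your parenthetical subadditivity remark is a harmless addition but not needed.
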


We now recall the definitions of $\k$-configurations and standard  $\k$-configurations. 
\begin{defn} [\cite{GHS:1,GHS:2,RR}]\label{kconfig}
  
 	 Let $1\leqslant d_1 < \ldots < d_s $ be integers and let $L_1, \ldots, L_s \subseteq \mathbb{P}^2$ be distinct lines. A \emph{$\k$-configuration of points in $\mathbb{P}^2$ of
      type $(d_1,\ldots,d_s)$} is a finite set
  $\mathbb{X}$ of points in $\mathbb{P}^2$  such
  that:
 
  \begin{enumerate}
  \item $\mathbb{X} = \bigcup_{i=1}^s \mathbb{X}_i$, where the $\mathbb{X}_i$ are   subsets of $\mathbb{X}$ ;
  \item $|\mathbb{X}_i| = d_i$ and
    $\mathbb{X}_i \subseteq L_i$ for each $i=1,\ldots,s$;
  \item $L_i$ ($1< i \leqslant s$) does not contain any
    points of $\mathbb{X}_j$ for all $j<i$.
      \end{enumerate}

\end{defn}

In analogy of \cite[Section 4]{GHS:1} in $\P^3$ and \cite[Section 4]{GHS:2} in $\P^n$, here we give an explicit definition of  standard $\k$-configurations in $\P^2$, which are special $\k$-configurations of points in $\P^2$ whose coordinates are  integer values.

\begin{defn} \label{standardk}
	Let $\k[x_0,x_1,x_2]$ be the  homogeneous ring for $\P^2$, and let $(d_1,\dots , d_s)$ be the type of a $\k$-configuration in $\P^2$. We construct a set of points  which realizes this type, and whose points   are located in the following   lines $ L_i$, where	
	\[  L_1=\{ x_2=(s-1)x_0\},  L_{2}=\{ x_2=(s-2)x_0\}, \quad \dots \quad, L_s= \{ x_2=0\}.\]
		 On each of these lines $ L_i$ we place $d_{i}$ points  as follows
			\begin{center}
			\begin{tabular}{lllll}
			&$d_1$ points on $L_1$  with coordinates $[ 1:j:s − 1 ]$ & $0 \leq j \leq d_1 − 1,$\\
			 &$d_2$ points	on $L_2$  with coordinates $[1:j:s-2 ]$  & $0\leq j \leq d_2-1$, \\
				& $\vdots$\\
			& $d_{s}$ points on $L_s$   with coordinates $[1:j:0 ]$ & $0\leq j \leq d_{s}-1$. \\

		\end{tabular}
		\end{center}
	If   $1\leq d_1<\dots <d_s$, we call  the $\k$-configuration of points in $\P^2$ constructed  as above a {\em standard $\k$-configuration} of type $(d_1,\dots , d_s).$

	\end{defn}

We conclude this section with two lemmas,  that are 
 key tools for the proofs in this paper. 
 
 The first one is a technical lemma from our previous paper 
\cite{CGS:1}, and it is an application of Bezout's Theorem.

The second lemma is 
useful to compute the Waldschmidt constants of all the standard $\k$-configurations  from  type $(1,b,2b-2)$ to  $(1,b,2b+1)$, since for those cases we need the existence of
 irreducible curves.

 \begin{lem} \label{L:20210429-201} Let  $m_1, \ldots, m_s$ and $d $ be positive integers and let $P_1, \ldots,P_s $  be $s$ points   lying on a line $\mathcal L$ with $s>1$. Let $\X$  be the scheme $m_1P_1+ \cdots+m_sP_s $. Set
\begin {equation} \label {mu}  \mu =\left \lceil \frac {m_1+\cdots +m_s -d }{s-1} \right \rceil,
\end {equation}
and assume  $[I_\X]_d \neq \{0\}$. Then 

\begin{enumerate}
\item [(i)]  $\mu \leq d$;

\item [(ii)] 
the line $\mathcal L$ is a fixed component of multiplicity at least $\mu$ for the plane curves of degree $d$  defined by the forms of the ideal $[I_\X]_d$. 
\end{enumerate}
\end{lem}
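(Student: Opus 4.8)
The statement is about a fat point scheme $\X = m_1P_1 + \cdots + m_sP_s$ with all supporting points collinear on a line $\mathcal{L}$, and we want to extract the line $\mathcal{L}$ as a fixed component of high multiplicity in every degree-$d$ curve through $\X$. The natural tool, as the paper signals, is B\'ezout's theorem applied iteratively.

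First I would prove (ii), from which (i) follows almost immediately. Let $F$ be a nonzero form in $[I_\X]_d$, defining a plane curve $C$ of degree $d$. Let $\ell$ be a linear form defining $\mathcal{L}$. Write $F = \ell^{k} G$ where $k \geq 0$ is maximal, so $\ell \nmid G$; we want to show $k \geq \mu$. Suppose for contradiction that $k < \mu$. The key point is to bound the intersection multiplicity of $C$ (more precisely, of the curve defined by $G$) with $\mathcal{L}$ at each point $P_i$. Since $C$ has multiplicity at least $m_i$ at $P_i$, and $\ell^k$ contributes exactly $k$ to the intersection multiplicity of $C$ with $\mathcal{L}$ at $P_i$, the residual curve $\{G = 0\}$ must pass through $P_i$ with multiplicity at least $m_i - k$ (for those $i$ with $m_i > k$; for the others there is no constraint). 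Hence the intersection multiplicity of $\{G=0\}$ with $\mathcal{L}$ at $P_i$ is at least $m_i - k$. Summing over $i$,
\[
\deg(G) \cdot 1 \;=\; I(\{G=0\}, \mathcal{L}) \;\geq\; \sum_{i=1}^{s} \max(m_i - k,\, 0) \;\geq\; \sum_{i=1}^{s}(m_i - k) \;=\; \Big(\sum_{i=1}^s m_i\Big) - sk,
\]
provided $\ell \nmid G$ so that B\'ezout applies to $\{G=0\}$ and $\mathcal{L}$ with finite intersection. Since $\deg(G) = d - k$, this gives $d - k \geq \sum m_i - sk$, i.e. $(s-1)k \geq \sum m_i - d$, hence $k \geq (\sum m_i - d)/(s-1)$, and since $k$ is an integer, $k \geq \lceil (\sum m_i - d)/(s-1)\rceil = \mu$, contradicting $k < \mu$. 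Thus $\ell^\mu \mid F$; since $F$ was an arbitrary nonzero element of $[I_\X]_d$, the line $\mathcal{L}$ is a fixed component of multiplicity at least $\mu$. (One should handle the degenerate possibility $\mu \leq 0$ trivially, and note $s>1$ is exactly what makes $s-1$ a positive divisor.)

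For (i): since $[I_\X]_d \neq \{0\}$, pick such an $F$; by (ii), $\ell^\mu \mid F$, so $\mu \leq \deg F = d$. Alternatively, $\mu = \lceil (\sum m_i - d)/(s-1)\rceil \leq d$ can be checked directly once one knows $[I_\X]_d$ is nonzero, but routing it through (ii) is cleanest.

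The main subtlety — and the step I would be most careful about — is the bookkeeping in the inequality $\max(m_i - k, 0)$ versus $m_i - k$: when some $m_i \leq k$ the residual curve need not vanish at $P_i$ at all, so one must use the $\max$ form to be rigorous, but since $\max(m_i-k,0) \geq m_i - k$ the final estimate $\sum \max(m_i-k,0) \geq \sum m_i - sk$ still holds, and the argument goes through. A second point requiring a line of justification is that $\ell^k$ contributes \emph{exactly} $k$ (not more) to $I(C,\mathcal{L})_{P_i}$ and that intersection multiplicities add under taking products of forms — both standard, but worth stating. Everything else is a direct application of B\'ezout (the line being irreducible, so $\ell \nmid G$ guarantees a proper, finite intersection of total length $\deg G$). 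Since this lemma is quoted from the authors' earlier paper \cite{CGS:1}, I would keep the write-up short and refer there for the fully detailed version if needed.
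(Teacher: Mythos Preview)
Your proof is correct and follows essentially the same B\'ezout-based approach that the paper invokes (the paper simply cites \cite[Lemma~2.5]{CGS:1} for part~(ii), which is precisely your iterated-B\'ezout argument). The only cosmetic difference is in part~(i): the paper proves it directly from the observation that $[I_\X]_d \neq \{0\}$ forces $d \geq m_i$ for every~$i$, whence $\sum m_i \leq sd$ and $\mu \leq \lceil (sd-d)/(s-1)\rceil = d$, whereas you deduce (i) as a corollary of~(ii).
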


\begin{proof}  (i) Since $[I_\X]_d \neq \{0\}$, then $d \geq m_i$ for any $i$, hence
$$\mu =\left \lceil \frac {m_1+\cdots +m_s -d }{s-1} \right \rceil \leq \left \lceil \frac {sd -d }{s-1} \right \rceil =d;
$$
(ii)  follows from
{\cite[Lemma 2.5]{CGS:1}}.
\end{proof}
 \begin{rem}
 Note that the condition $\mu \leq d$  follows from the hypothesis $[I_\X]_d \neq \{0\}$. Hence   we should not have to put that condition  in  {\cite[Lemma 2.5]{CGS:1}}'s  hypotheses.
 \end{rem}

\begin{lem} \label{T:20210527-109} 
Let $L$, $M$ be two distinct  lines, and let $b$ be a positive integer. Let  $P_1,\dots,P_b $,  $Q_1,\dots,Q_b$, $R$ be distinct points such that  $R \not\in L \cup M$, and,  for any $1 \leq i  \leq b$,  $P_i \in L$,  $Q_i \in M$, and the point  $L \cap M \not\in \{P_1,\dots,P_b , Q_1,\dots,Q_b\}$. Moreover $R$, $P_i$, $Q_j$ do not lie on a line, for any $i$ and $j$. Then
\begin{enumerate}
\item [(i)] the scheme $\X= P_1+\cdots +P_b + Q_1+\cdots +Q_b +(b-1)R$ gives independent conditions to the curve of degree $b$ (see Figure \ref{FIG:20230726});
\item [(ii)] the only curve of degree $b$   in $[I_{\X}]_{b}$ is irreducible.
\end{enumerate}
\end{lem}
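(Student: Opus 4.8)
\emph{Set-up and strategy.} The scheme $\X$ has degree $\deg\X = 2b+\binom{b}{2}=\frac{b(b+3)}{2}=\binom{b+2}{2}-1$, so (i) is equivalent to $\dim[I_{\X}]_b=1$, and from this (ii) is immediate once we know that \emph{every} curve of degree $b$ through $\X$ is irreducible. So the plan is: (1) show no curve in $[I_{\X}]_b$ contains $L$ or $M$; (2) show every such curve is irreducible; (3) deduce $\dim[I_{\X}]_b=1$ by a dimension count plus a Bezout argument, which gives (i) and hence (ii). Throughout I use Bezout's theorem repeatedly, together with the hypothesis that $R$ is never collinear with a pair $P_i,Q_j$, and that $L\cap M$ is none of the listed points.

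\emph{Steps (1) and (2).} Let $C\in[I_{\X}]_b$. If $L\subseteq C$, write $C=L\cup C'$ with $\deg C'=b-1$; since each $Q_j\notin L$ (as $Q_j\in M$, $Q_j\ne L\cap M$), the curve $C'$ passes through the $b$ points $Q_1,\dots,Q_b$ of $M$, so by Bezout $M\subseteq C'$, and $C=L\cup M\cup C''$ with $\deg C''=b-2$. But $R\notin L\cup M$ forces $\operatorname{mult}_R C''=\operatorname{mult}_R C\ge b-1>b-2$, which is impossible for a curve of degree $b-2$ (and if $C''$ is trivial then $C=L\cup M$ does not pass through $R$); contradiction. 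Hence $L\not\subseteq C$, and symmetrically $M\not\subseteq C$. Consequently $C\cap L$ consists of exactly the $b$ points $P_i$, each simple, and likewise $C\cap M=\{Q_1,\dots,Q_b\}$, each simple. Now suppose $C=A\cup B$ with $\deg A=a\ge1$, $\deg B=b-a\ge1$. Since $L,M\not\subseteq A,B$, Bezout forces $A$ to pass through exactly $a$ of the $P_i$ and exactly $a$ of the $Q_j$, each simply; after relabelling, $A$ contains $P_1,\dots,P_a,Q_1,\dots,Q_a$ and $B$ contains the remaining ones (the $P_i$, $Q_j$ cannot be shared between $A$ and $B$, by the simplicity just noted). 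Writing $e=\operatorname{mult}_R A\le a$ and $f=\operatorname{mult}_R B\le b-a$, we have $b-1\le e+f\le b$, which forces $e=a$ or $f=b-a$. If $e=a$, then $A$ is a union of $a$ lines through $R$; these $a$ lines meet $L$ in $a$ distinct points, necessarily $P_1,\dots,P_a$, and meet $M$ in $Q_1,\dots,Q_a$, so some line contains $R$, some $P_i$ and some $Q_j$ — contradicting the non-collinearity hypothesis. The case $f=b-a$ is identical with $B$. Hence $C$ is irreducible; this proves (2), and with (3) below it gives (ii).

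\emph{Step (3) and conclusion.} Since $\X$ imposes at most $\deg\X=\binom{b+2}{2}-1$ conditions on forms of degree $b$, we get $\dim[I_{\X}]_b\ge\binom{b+2}{2}-\deg\X=1$. Conversely, if $C_1,C_2\in[I_{\X}]_b$ were linearly independent, then by Step (2) both are irreducible of degree $b$, hence share no component, so $C_1\cap C_2$ is finite of degree $b^2$; but it contains $\X$, and $\operatorname{mult}_R(C_1\cap C_2)\ge(b-1)^2$ together with the $2b$ simple points gives $\deg(C_1\cap C_2)\ge(b-1)^2+2b=b^2+1>b^2$, a contradiction. Therefore $\dim[I_{\X}]_b=1$, so $\X$ imposes exactly $\binom{b+2}{2}-1=\deg\X$ conditions, i.e. independent conditions (proving (i)), and the unique curve of degree $b$ through $\X$ is irreducible by Step (2) (proving (ii)).

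\emph{Main obstacle.} The delicate point is the case analysis in Step (2): one must rule out every reducible configuration, and the key is that the multiplicity bounds at $R$ collapse ($e=a$ or $f=b-a$), forcing a component to degenerate into lines through $R$ — which is exactly where the non-collinearity hypothesis on $R,P_i,Q_j$ is used. Care is also needed with the small values $b=1,2$ and with the trivial-component sub-cases, but these follow from the same arguments.
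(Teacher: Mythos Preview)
Your proof is correct and takes a genuinely different route from the paper's.

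For (i), the paper separates points constructively: it observes that $(b-1)R$ imposes independent conditions on degree-$b$ curves, and then for each $Q_i$ (resp.\ $P_i$) it writes down an explicit degree-$b$ curve through $\X-Q_i$ missing $Q_i$, namely $L$ together with the $b-1$ lines $RQ_j$, $j\neq i$. You instead deduce (i) \emph{after} irreducibility, by a Bezout count on two hypothetical independent curves: $(b-1)^2+2b>b^2$.

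For (ii), the paper argues by induction on $b$: if $\Ca=\sum\Ca_i$ with all $\deg\Ca_i>1$, then $\operatorname{mult}_R\Ca_i\le\deg\Ca_i-1$ forces $r\le1$; otherwise there is a linear component, and one splits into the sub-cases $R\notin\Ca_1$ and $R\in\Ca_1$, the latter reducing to the inductive hypothesis for $b-1$. Your argument avoids induction entirely: from $L,M\not\subseteq C$ you get that any factor $A$ of degree $a$ meets $L$ and $M$ in exactly $a$ of the $P_i$ and $a$ of the $Q_j$, and the inequality $(a-\operatorname{mult}_RA)+((b-a)-\operatorname{mult}_RB)\le1$ forces one factor to be a cone of lines through $R$, immediately contradicting the non-collinearity hypothesis.

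Both approaches are short; yours is a bit slicker for (ii) and shows directly that \emph{every} curve in the system is irreducible (not just the unique one), while the paper's proof of (i) is more explicit and does not rely on having (ii) first.
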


\begin{figure}[ht] 
\centering
\vskip -3.3cm
\psset{xunit=.8cm,yunit=.8cm,algebraic=true,dimen=middle,dotstyle=o,dotsize=7pt 0,linewidth=1pt,arrowsize=3pt 2,arrowinset=0.25}
\begin{pspicture*}(-7.14,-9.2)(12.4,8.6)

\psline[linewidth=1pt](0.04,4.38)(7.9,4.38)
\psline[linewidth=1pt](0.34,2.32)(6.,3.5)

\rput[tl](-0.33,3.55){\tiny$R$}
\rput[tl](-0.48,4.53){\tiny$L$}
\rput[tl](-0.16,2.45){\tiny$M$}
\rput[tl](2.79,4.59){\Huge$\cdots$}
\rput[tl](2.55,3.02){\Huge$\cdot$}
\rput[tl](3.05,3.13){\Huge$\cdot$}
\rput[tl](3.55,3.24){\Huge$\cdot$}

\begin{scriptsize}
\psdots[dotstyle=*](0.2,3.42)
\psdots[dotstyle=*](1,4.4)
\psdots[dotstyle=*](2.0,4.38)
\psdots[dotstyle=*](5.0,4.38)
\psdots[dotstyle=*](6.0,4.38)
\psdots[dotstyle=*](7.0,4.38)
\psdots[dotstyle=*](1.2713797048392632,2.514939938222171)
\psdots[dotstyle=*](2.1432845212218283,2.6974316439766617)
\psdots[dotstyle=*](4.268294245509667,3.142201121153186)
\psdots[dotstyle=*](5.087365290012585,3.313634595584029)
\end{scriptsize}
\end{pspicture*}\vskip -9.4cm
\caption{The scheme $\X$ }\label{FIG:20230726}
\end{figure}

\begin{proof} 
(i) It is well known that the fat point $(b-1)R$ gives independent conditions to the curve of degree $b$. Consider the following curve $\Ga_i$ of degree $b$ 
$$\Ga_i=L + N_1+ \cdots + N_{i-1} +  N_{i+1} \cdots + N_b ,$$
where $N_j$ is the line $RQ_j$, $j\neq i$, so that $\Ga_i$ contains  the scheme $\X - Q_i$, but it does not contain $Q_i$. Analogously we can construct a curve of degree $b$ passing through $\X - P_i$,  that does not contain $P_i$. Hence $\{P_1, \ldots ,P_b , Q_1, \ldots ,Q_b\}$ gives independent 
conditions to the curves defined by the linear system $[I_{(b-1)R}]_{b}$, and thus (i) follows.

(ii)  Note that since
$$
\binom{b+2}{2}-\bigg(b+b+\binom{b}{2}\bigg)=1,
$$
then from (i) there exists only a curve of degree $b$ through $\X$, say $\Ca$.
Now we prove by induction on $b$ that  the curve  $\Ca$ is irreducible.  Obvious for $b=1$, assume  $b>1$. Assume that 
$$\Ca = \Ca_1+\cdots+\Ca_r,$$
where $r>1$ and the $\Ca_i$ are the irreducible components of $\Ca$.  Let $b_i = \deg \Ca _i$, and let $m_i$ be the multiplicity of $\Ca _i$ at $R$.

 Note that if  $b_i =1$, i.e., $\Ca_i$ is a line, then  $m_i \leq 1$;    if  $b_i >1$, since $\Ca_i$ is irreducible,  then $m_i \leq b_i-1$.

If for each $i$ we have $b_i >1$,  then
$$b-1 \leq m_1+ \cdots +m_r \leq( b_1-1)+ \cdots + ( b_r-1) = b-r , $$
hence $r\leq1$, and we get a contradiction.

Otherwise, without loss of generality, we can assume that $b_1=1$, that is, $\Ca_1$ is a line.

If $R \not \in \Ca_1$, then $\Ca_1$ contains at most $b$ simple points of $\X$. So since the curve $\Ha=\Ca_2+\cdots+\Ca_r$ has degree $b-1$, and contains the fat point $(b-1)R$,  then it is union of $b-1$ lines through $R$. Moreover, since  each line through $R$ contains at most one point of $\X- (b-1)R$, then $\Ha$ cannot contains $\X- (b-1)R-\Ca_1$.
Hence $R  \in \Ca_1$ and so  $\Ca_1$ contains at most one other point of $\X$. Hence $\Ha = \Ca_2+\cdots+\Ca_r$ is a curve of degree $b-1$ through $\X-\Ca_1$, that is, through $(b-2)R$ and at least $2b-1$  points in the set
$\{P_1, \ldots ,P_b , Q_1, \ldots ,Q_b\}$. We may assume that $\Ha$ contains 
$P_1+ \cdots +P_b +Q_1+\cdots +Q_{b-1}$.
By the inductive hypothesis, the only   curve  of degree  $b-1$ through 
$(b-2)R+P_1+ \cdots +P_{b-1} +Q_1+\cdots +Q_{b-1}$ is irreducible. Hence $\Ha$ has to be that curve. But 
$P_b \in \Ha $, so, by Bezout's Theorem, $L$ is a component of $\Ha $, hence, since $\Ha$ is irreducible, we get $L=\Ha $. It follows that $Q_1 \in L$, a contradiction.
\end{proof}

\section{Method} \label{M20210723-1}

In this section we describe the main method  to find the Waldschmidt constant of a set $\X$ of points in $\P^2$. 
Our computation is structured as follows.
\begin{enumerate}
\item[\it Step 1.] We look for a  curve $\Fa$ of degree $d$, which contains each point of   $\X$ with  multiplicity exactly $\mu$, so that, for each $m>0$,  $m\Fa$ is a curve in the linear system $\big[I_{m\mu\X}\big]_{md}$ and  so $\big[I_{m\mu\X}\big]_{md}\neq \{0\}.$

\item[\it Step 2.] 
We show that $\big[I_{m\mu\X}\big]_{md-1}= \{0\},$ for each $m\ge 1$
and we prove it  by contradiction. 
For this purpose we define
$$\bar m =\min \{ m | [I_{m \mu \X}]_{m d-1}\neq \{0\} \}.
$$
We prove, mostly directly, that  $\bar m \neq 1$. For $\bar m>1$,   applying Lemma \ref {L:20210429-201} several times,
we show that $\Fa$ is a fixed component for the linear system
$\big[I_{\bar m\mu\X}\big]_{\bar md-1}.$ 
Thus, by removing $\Fa$, we get
$$\dim \big[I_{\bar m\mu\X}\big]_{\bar md-1}= \dim \big[I_{\bar m\mu\X- \Fa}\big]_{\bar md-1-d}$$
 and, since $\Fa$ contains each point of   $\X$ with  multiplicity exactly $\mu$, we have
 $$\big[I_{\bar m\mu\X- \Fa}\big]_{\bar md-1-d}=\big[I_{(\bar m-1)\mu\X}\big]_{(\bar m-1)d-1}$$
 and the contradiction comes from the minimality of $\bar m$.

\item[\it Step 3.] Since the initial degree of $\big[I_{m\mu\X}\big]$ is $md$, then, by Lemma \ref {L:20210722-1} we have 
 $$
\widehat \alpha (I_{\X}) = \frac{d}{\mu}.
$$
\end{enumerate}

Note that if $\X$ is  a standard $\k$-configuration, then the curve $\Fa$  strictly depends on the type of $\X$. In certain cases it is a union of lines, and in other cases it has irreducible components of higher degrees.

\section{Waldschmidt constants of  $\k$-configurations   of type 
$(d_1,\dots, d_s)$ with $ d_1 \geq s$} \label{d1ds}

 In the next lemma we compute the Waldschmidt constant of a set of points $\X$ contained in $s$ lines, where each line contains at least $s$ points of $\X$ and no two lines meet in $\X$.

The following lemma will be useful for computing the Waldschmidt constants of  both a $\k$-configuration  of type $(d_1,\dots, d_s)$ and a  standard $\k$-configuration of the same type $(d_1,\dots, d_s)$, when $ d_1 \geq s$.

\begin{lem}\label{L:20210529-205} Let $ s$ be a positive integer, and let $L_1, \ldots, L_s $ be distinct lines.
Let $\X_i$ be a finite set of $d_i$ points on the line $L_i$ ($1\le i\le s$), and let ${\X} = \bigcup_{i=1}^s {\X}_i$. If $d_i \geq s$, for each $1 \leq i \leq s$, and any  intersection point of two lines $L_i$ and $L_j$, for $ i\ne j$, is not contained in $\X$, then the Waldschmidt constant of $\X$ is 
$$
\widehat\alpha (I_\X)=s.
$$ 
\end{lem}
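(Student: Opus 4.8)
The plan is to run the three-step scheme of Section~\ref{M20210723-1} with parameters $\mu=1$ and $d=s$, so that Lemma~\ref{L:20210722-1} yields $\widehat\alpha(I_\X)=s$ at the end. For the auxiliary curve I take the reduced union $\Fa=L_1\cup\cdots\cup L_s$, of degree $s$. The hypothesis that no intersection point $L_i\cap L_j$ ($i\ne j$) lies in $\X$ says precisely that every point of $\X$ lies on exactly one of the lines $L_1,\dots,L_s$ --- namely the $L_i$ with $P\in\X_i$ --- so $\Fa$ is smooth at each point of $\X$ and passes through it with multiplicity exactly $1$. Hence for every $m\ge1$ the form defining $m\Fa$ is a nonzero element of $[I_{m\X}]_{ms}$, which is Step~1.

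The core is Step~2: proving $[I_{m\X}]_{ms-1}=\{0\}$ for every $m\ge1$. Following the method I set $\bar m=\min\{\,m\mid[I_{m\X}]_{ms-1}\ne\{0\}\,\}$, pick a nonzero form $F$ of degree $\bar ms-1$ vanishing to order $\ge\bar m$ at each point of $\X$, and let $C$ be the plane curve it defines. Fix $i$. Since $C$ has multiplicity $\ge\bar m$ at each of the $d_i\ge s$ points of $\X_i\subseteq L_i$, the intersection multiplicity of $C$ with $L_i$ at such a point is $\ge\bar m$; so if $L_i$ were not a component of $C$, Bezout's Theorem would give
$$\bar ms-1=\deg C\ \ge\ \bar m\, d_i\ \ge\ \bar m\, s,$$
which is absurd. (This is exactly the situation of Lemma~\ref{L:20210429-201}, which even pins down the fixed multiplicity; here the bare Bezout count suffices.) Thus every $\ell_i$ divides $F$, and the lines being distinct we may write $F=\ell_1\cdots\ell_s\,F'$ with $\deg F'=(\bar m-1)s-1$. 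Because $\Fa$ meets $\X$ with multiplicity exactly $1$ at every point of $\X$, the order of vanishing of $F'$ at each point of $\X$ is $\ge\bar m-1$, i.e. $F'\in[I_{(\bar m-1)\X}]_{(\bar m-1)s-1}$. If $\bar m\ge2$ this contradicts the minimality of $\bar m$; if $\bar m=1$ then $\deg F'=-1$, impossible. Either way we reach a contradiction. Finally $[I_{m\X}]_{ms-1}=\{0\}$ forces $[I_{m\X}]_{j}=\{0\}$ for all $j\le ms-1$ (multiply a nonzero form up by a linear form), so the initial degree of $m\X$ is exactly $ms$ for all $m$, and Step~3 is the one-line invocation of Lemma~\ref{L:20210722-1} with $\mu=1$, $d=s$.

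I do not expect a genuinely hard point here: the argument is uniform in $s\ge1$, and for $s=1$ it degenerates to the obvious fact that a curve of degree $\bar m-1$ cannot have a point of multiplicity $\bar m$. The crux --- and the only place both hypotheses $d_i\ge s$ and ``no $L_i\cap L_j$ in $\X$'' are used --- is organizing Step~2 so that the Bezout estimate forces every $L_i$ into $C$ and so that peeling off $\Fa$ drops the multiplicity data by exactly one at each point of $\X$ simultaneously; the latter is guaranteed by taking $\Fa$ to be the reduced union of the lines. The bookkeeping to watch is that each point of $\X$ lies on a unique line, that the intersection multiplicity of $C$ with $L_i$ at a point of $\X_i$ dominates the multiplicity of $C$ there, and the degree count $\deg F'=(\bar m-1)s-1$, which is what lets the induction on $\bar m$ bottom out.
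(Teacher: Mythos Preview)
Your proof is correct and follows essentially the same route as the paper: take $\Fa=L_1+\cdots+L_s$, use Bezout on each line to force $\Fa$ to be a fixed component of $[I_{\bar m\X}]_{\bar ms-1}$, then peel off $\Fa$ and invoke minimality of $\bar m$. The only cosmetic differences are that the paper treats $s=1$ separately and cites Lemma~\ref{L:20210429-201} for the fixed-component step (requiring $s\ge2$ there), whereas you apply Bezout directly and handle all $s$ uniformly; both are fine.
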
       

\begin{proof} For $s=1$, it is immediate. So we assume $s>1$.

Let  $m$ be a positive integer. The curve
$
\Fa=L_1+\cdots+L_s
$
has degree $s$ and passes through  the points of $\X$ with multiplicity  $1$, hence
$$
m\Fa\in [I_{m\X}]_{m s}.
$$
Now we prove that for each $m >0$,
$$
[I_{m \X}]_{m s-1}=\{0\},
$$
so the initial degree of  $I_{m \X}$ will be $ms$ and the conclusion will follow from Lemma \ref {L:20210722-1}.

Assume that for some $m$, $[I_{m \X}]_{m s-1}\neq\{0\}$, and note that if $[I_{m \X}]_{m s-1}\neq\{0\}$, then by Lemma \ref {L:20210429-201}, since $d_i\ge s\ge 2$, each $L_i$ is a fixed component of multiplicity at least 
\begin {equation} \left \lceil \frac {md_i -(ms-1) }{d_i-1} \right \rceil \geq 1,
\end {equation}
for the plane curves of the linear system $[I_{m \X}]_{m s-1}$, hence $\Fa$  is a fixed component for the  curves defined by this linear system.

Set 
\begin {equation} \label {barm1}  \bar m =\min \{ m | [I_{m \X}]_{m s-1}\neq \{0\} \}.
\end {equation}
First observe that $\bar m \neq 1$. In fact, for 
$m =1$, since $\deg\Fa=s$, then $[I_{\X}]_{s-1}=\{0\}$.
By removing $\Fa$
from the  curves  of  the linear system $[I_{\bar m \X}]_{\bar m s-1}$,  since  any  intersection point of two lines $L_i$ and $L_j$ is not contained in $\X$, we get
$$
\dim [I_{\bar m \X}]_{\bar m s-1}=\dim [I_{\bar m \X-\Fa}]_{(\bar m s-1)-s}=\dim [I_{(\bar m-1) \X}]_{(\bar m-1) s-1},
$$
and by \eqref {barm1} this is zero, a contradiction.
\end{proof}

\begin{cor} \label{M20210802-1} Let  $\X$ be  a standard $\k$-configuration of type $(d_1,\dots, d_s)$ with $ d_1 \geq s$, then the Waldschmidt constant of $\X$ is
$$
\widehat \alpha(I_\X)=s. 
$$
\end{cor}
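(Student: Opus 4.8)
The plan is to deduce the statement directly from Lemma~\ref{L:20210529-205}, by checking that a standard $\k$-configuration of type $(d_1,\dots,d_s)$ with $d_1\ge s$ is an instance of the configurations treated there. By Definition~\ref{standardk}, $\X$ decomposes as $\X=\bigcup_{i=1}^s\X_i$, where $\X_i$ is a set of $d_i$ distinct points lying on the line $L_i=\{x_2=(s-i)x_0\}$, and the lines $L_1,\dots,L_s$ are pairwise distinct because the values $s-1,s-2,\dots,0$ are. Since $1\le d_1<d_2<\dots<d_s$, the hypothesis $d_1\ge s$ forces $d_i\ge s$ for every $i$, so the cardinality condition of Lemma~\ref{L:20210529-205} is satisfied.

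The only remaining hypothesis to verify is that no point $L_i\cap L_j$ with $i\ne j$ belongs to $\X$, and this is precisely where the explicit coordinates in Definition~\ref{standardk} do the work. From $x_2=(s-i)x_0$ and $x_2=(s-j)x_0$ with $i\ne j$ one gets $(j-i)x_0=0$, hence $x_0=0$ and then $x_2=0$, so in fact $L_i\cap L_j=\{[0:1:0]\}$ for \emph{every} pair $i\ne j$. On the other hand, every point of $\X$ has coordinates of the form $[1:j:s-i]$, so it lies in the affine chart $x_0\ne 0$; in particular $[0:1:0]\notin\X$. Thus no intersection point of two of the lines $L_i$ is contained in $\X$.

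With both hypotheses of Lemma~\ref{L:20210529-205} in place, we conclude $\widehat\alpha(I_\X)=s$. I do not expect any genuine obstacle here: the corollary is a straight specialization of Lemma~\ref{L:20210529-205}, whose proof in turn runs the Step~1--Step~3 scheme of Section~\ref{M20210723-1} with the curve $\Fa=L_1+\cdots+L_s$, together with Lemma~\ref{L:20210722-1} and Lemma~\ref{L:20210429-201}. The single point requiring a little care is to invoke the correct concrete fact about the standard model, namely that all the lines $L_i$ pass through the one point $[0:1:0]$, which is disjoint from $\X$; everything else is a matter of quoting Lemma~\ref{L:20210529-205}.
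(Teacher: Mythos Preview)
Your proof is correct and follows exactly the paper's approach: the paper's proof of this corollary is literally ``It follows from the previous lemma,'' i.e., Lemma~\ref{L:20210529-205}, and you have simply spelled out the verification of its hypotheses using the explicit coordinates from Definition~\ref{standardk}. The only extra content you added is the (correct and helpful) observation that all the lines $L_i$ meet at $[0{:}1{:}0]$, which lies outside the affine chart $x_0\neq 0$ containing $\X$.
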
 

\begin{proof} It follows from the previous lemma.
\end{proof}

\begin{cor} \label{P:20210529-206} With notation as in Definition~\ref{kconfig}, if $\X$ is a $\k$-configuration  of type $(d_1,\dots, d_s)$ with $ d_1\geq s$, then the Waldschmidt constant of $\X$ is
$$
\widehat \alpha(I_\X)=s. 
$$
\end{cor}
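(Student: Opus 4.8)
The plan is to reduce Corollary~\ref{P:20210529-206} to Lemma~\ref{L:20210529-205}, exactly as Corollary~\ref{M20210802-1} reduces the standard case. The only thing I need to check is that a general $\k$-configuration of type $(d_1,\dots,d_s)$ with $d_1\ge s$ satisfies the hypotheses of Lemma~\ref{L:20210529-205}: namely that $\X$ is a union of sets $\X_i$ of $d_i$ points on distinct lines $L_i$ with $d_i\ge s$ and no intersection point $L_i\cap L_j$ ($i\ne j$) lying in $\X$. The first two conditions are immediate from Definition~\ref{kconfig}: such a configuration comes with lines $L_1,\dots,L_s$ and subsets $\X_i\subseteq L_i$ with $|\X_i|=d_i$, and since $d_1<d_2<\cdots<d_s$ we have $d_i\ge d_1\ge s$ for every $i$.

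The point that needs a short argument is the ``no intersection point in $\X$'' condition, which is not literally part of Definition~\ref{kconfig} --- the definition only forbids $L_i$ (for $i>1$) from passing through points of $\X_j$ with $j<i$. So first I would observe that this weaker condition already suffices. Indeed, suppose $P=L_i\cap L_j\in\X$ with $i<j$. By the convention recalled right after the definition in the Introduction (``the last condition forces a point in $\X$ to belong to $\X_k$ for the largest index $k$ of a line containing it''), $P$ lies on both $L_i$ and $L_j$ with $j>i$, so $P$ would have to belong to $\X_j$ and not to $\X_i$; but then $P\in\X_j\subseteq L_j$ and $P\in L_i$ with $i<j$, and this does \emph{not} contradict condition (3) of Definition~\ref{kconfig}. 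So in a general $\k$-configuration an intersection point \emph{can} lie in $\X$. The correct fix is therefore to take the partition into account: set $\X_i'=\X_i$ and note that if $P=L_i\cap L_j\in\X$ ($i<j$), then $P\in\X_j$, and one may simply \emph{discard} $P$ from $\X_j$, or alternatively re-run the Step~1--Step~3 method directly. I expect the cleanest route is the following: the curve $\Fa=L_1+\cdots+L_s$ still passes through every point of $\X$ with multiplicity exactly $1$ unless a point lies on two of the lines, in which case it passes with multiplicity $2$; so $m\Fa\in[I_{m\X}]_{ms}$ still holds and the upper bound $\widehat\alpha(I_\X)\le s$ is unaffected.

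For the lower bound $\widehat\alpha(I_\X)\ge s$, I would argue as in the proof of Lemma~\ref{L:20210529-205} but keep track of the possible coincidences. Assume $[I_{m\X}]_{ms-1}\ne\{0\}$ for some $m$, and let $\bar m$ be minimal with this property. Since $\deg\Fa=s>s-1$, we get $\bar m\ne 1$. For each $i$, restricting a degree $ms-1$ form to $L_i$ and applying Lemma~\ref{L:20210429-201} to the $\ge d_i\ge s\ge 2$ points of $\X_i$ on $L_i$ with multiplicity $m$ shows $L_i$ is a fixed component (of multiplicity $\ge\lceil (md_i-(ms-1))/(d_i-1)\rceil\ge 1$), so $\Fa$ is a fixed component of the system. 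Removing $\Fa$ gives $\dim[I_{\bar m\X}]_{\bar ms-1}=\dim[I_{\bar m\X-\Fa}]_{(\bar m-1)s-1}$. Now $\bar m\X-\Fa$ is $(\bar m-1)\X'$ where $\X'$ is $\X$ with the multiplicity at each point reduced by $1$ at each line through it; at an ordinary point this is $(\bar m-1)\X_i$, and at a point $P=L_i\cap L_j$ on two lines the residual multiplicity is $(\bar m-2)$ rather than $(\bar m-1)$. In either case $\bar m\X-\Fa\supseteq (\bar m-1)\X$ as subschemes is false in general, but the containment goes the other way: $(\bar m-1)\X \supseteq \bar m\X - \Fa$, hence $\dim[I_{\bar m\X-\Fa}]_{(\bar m-1)s-1}\le \dim[I_{(\bar m-1)\X}]_{(\bar m-1)s-1}=0$ by minimality of $\bar m$ --- wait, that inequality is the wrong direction. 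The careful version: the inclusion of schemes $(\bar m-1)\X\hookrightarrow \bar m\X-\Fa$ would give $[I_{\bar m\X-\Fa}]\subseteq[I_{(\bar m-1)\X}]$; and indeed $\bar m\X-\Fa$ forces multiplicity $\ge \bar m-1$ at every point (multiplicity $\ge \bar m-2$ at double points is weaker, but a curve of degree $\bar ms-1-s=(\bar m-1)s-1$ through $(\bar m-1)\X_i$-type conditions on each $L_i$ is again forced to contain each $L_i$ by Lemma~\ref{L:20210429-201}, and peeling off $\Fa$ once more reduces to the $(\bar m-2)$ case). I expect the main obstacle is exactly this bookkeeping at the at most $\binom{s}{2}$ intersection points that may lie in $\X$: one must verify that the inductive ``remove $\Fa$'' step still strictly decreases $m$ without the configuration degenerating, which can be handled either by a small separate induction on $\bar m$ or, more slickly, by reducing to Lemma~\ref{L:20210529-205} applied to the $\k$-configuration with those finitely many bad points deleted and observing that deleting points can only decrease $\widehat\alpha$, while the explicit curve $\Fa$ shows it is still $\le s$; since a sub-configuration of a $\k$-configuration with $d_1\ge s$ still has at least $s$ points on each line (we delete at most one point, an intersection point, per line — and we must check $d_i-1\ge s$, which fails when $d_i=s$; so instead one deletes from $L_j$ the point of $\X_i\cap L_j$ only when it genuinely lies in $\X$, using $d_j\ge d_i+1\ge s+1$ for $j>i$). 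This last point — that the line losing a point is always the one with the larger index, hence with $d_j\ge s+1$ — is what makes the reduction go through, and I would make it the technical heart of the proof.
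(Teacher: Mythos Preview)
Your final approach---delete the intersection points from $\X$ to get $\X'$, apply Lemma~\ref{L:20210529-205} to $\X'$ for the lower bound, and use $m\Fa\in[I_{m\X}]_{ms}$ for the upper bound---is exactly the paper's proof. The paper writes it in four lines: $\widehat\alpha(I_\X)\le s$ from $\Fa=L_1+\cdots+L_s$; then $\X'\subseteq\X$ gives $\widehat\alpha(I_{\X'})\le\widehat\alpha(I_\X)$; and Lemma~\ref{L:20210529-205} gives $\widehat\alpha(I_{\X'})=s$. Your long detour through a direct ``remove $\Fa$'' induction is unnecessary (and, as you noticed yourself, the inequality goes the wrong way).

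There is however a genuine slip in your counting at the end. You claim ``we delete at most one point, an intersection point, per line'' and justify $|\X'_j|\ge s$ via $d_j\ge s+1$. This is not enough: the line $L_j$ can meet each of $L_1,\dots,L_{j-1}$ in a point of $\X_j$, so $\X_j$ may lose up to $j-1$ points, not just one. (Condition~(3) of Definition~\ref{kconfig} forces any such intersection point to lie in $\X_j$, as you correctly argued, but it does not limit how many of them there are.) The correct bound is
\[
|\X'_j|\ \ge\ d_j-(j-1)\ \ge\ \bigl(d_1+(j-1)\bigr)-(j-1)\ =\ d_1\ \ge\ s,
\]
using that $d_1<d_2<\cdots<d_s$ are integers, so $d_j\ge d_1+(j-1)$. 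With this fix the reduction to Lemma~\ref{L:20210529-205} goes through, and your argument then coincides with the paper's.
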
 

\begin{proof} 
Let
$\Fa=L_1+\cdots+L_s,$ thus 
 $ m \Fa \in [I_{m \X}]_{m s}. $
Hence
\begin{equation*} \label{EQ:20210529-202} 
\widehat \alpha (I_\X)\le s.
\end{equation*} 
Now let $\X'$ be the subset of $\X$  that we get after we remove the possible points of $\X$ in the intersections $L_i \cap L_j$, for   $ i \neq j$. Let $\X'_i =\X' \cap L_i $.  Recalling Definition~\ref{kconfig}  it is easy to show that by  Lemma \ref{L:20210529-205} we have
$$
\widehat \alpha(I_{\X'})=s. 
$$
Since $\X' \subseteq \X$, we have  $\widehat \alpha(I_{\X'}) \leq \widehat \alpha(I_{\X})$.
Thus, the conclusion follows from
$$
s=\widehat \alpha (I_{\X'})\le \widehat \alpha (I_\X)\le s. 
$$
\end{proof}

 \begin{rem} \label{Sa-ab}From Corollary \ref{M20210802-1}, we immediately get that the Waldschmidt constant of  a standard $\k$-configuration of type $(d_1)$ is 1, and  of type  $(d_1,d_2)$ with $d_1 \geq 2$ is 2. 
 For the case $(1,d_2)$ see  {\cite[Proposition 3.3]{DHST:2}}, where it is proved that  
 if  $\X$ is a { standard} $\k$-configuration of type  $(1,d_2)$, then 
$
\widehat\alpha(I_\X)=\frac{2d_2-1}{d_2}.
$
   \end{rem}

\section{Waldschmidt constants of  standard $\k$-configurations  of type $(1,b,c)$} \label{S1bc}

 In this section we compute the Waldschmidt constant of a standard $\k$-configuration $\X$ of type $(1,b,c)$ as in Definition \ref{standardk}, for any values of $b$ and $c$.

It is interesting to note that  the Waldschmidt constant  stabilizes at $c= 2b+2$, that is, 
$$
\widehat\alpha(I_\X)=\frac{3b-1}{b} \  \hbox{ for } \ c\geq 2b+2
$$
(see Theorem \ref{T:20210617-419}).
One could expect that, for each fixed $b$,  the Waldschmidt constant strictly increases with $c$  until  $c= 2b+2$. But this is not always the case, as shown in Corollary \ref{ME20210909}, since for $c\leq 2b-3$  it behaves in a similar way as a step function.

We fix the notation of this section,  summarized in Figure \ref{figuragenerale1}, that will be used in the proofs.

\begin{figure}[ht] 
\centering
\vskip -5.5cm
\psset{xunit=1cm,yunit=1cm,algebraic=true,dimen=middle,dotstyle=o,dotsize=7pt 0,linewidth=.5pt,arrowsize=3pt 2,arrowinset=0.25}
\begin{pspicture*}(-10,-8.8)(5,8.8)

\psline[linewidth=1pt](-6.7,1)(1.7,1)

\psline[linewidth=1pt](-6.7,2)(0.7,2)

\psline[linewidth=1pt](-6,3.5)(-6,0.65)

\psline[linewidth=1pt](-6.23,3.46)(-4.81,0.65)

\psline[linewidth=1pt](-6.41,3.37)(-3.65,0.66)

\psline[linewidth=1pt](-6.41,3.25)(-2.45,0.66)

\psline[linewidth=1pt](-6.51,3.2)(-1.25,0.66)

\psline[linewidth=1pt](-6.51,3.1)(1.25,1.66)

\rput[tl](-0.65,2.17){\Huge $\cdots$}
\rput[tl](0.0,1.17){\Huge$\cdots$}

\rput[tl](-7.15,1.1){\tiny$\La_1$}
\rput[tl](-7.15,2.1){\tiny$\La_2$}

\rput[tl](-6.40,2.9){\tiny$R$}
\rput[tl](-6.40,2.3){\tiny$Q_1$}
\rput[tl](-5.48,2.3){\tiny$Q_2$}
\rput[tl](-0.00,2.3){\tiny$Q_b$}

\rput[tl](-6.40,1.3){\tiny$P_1$}
\rput[tl](-5.48,1.3){\tiny$P_2$}
\rput[tl](-4.55,1.3){\tiny$P_3$}
\rput[tl](-3.65,1.3){\tiny$P_4$}
\rput[tl](0.7,1.3){\tiny$P_c$}

\rput[tl](-6.27,0.6){\tiny$\Ma_1$}
\rput[tl](-4.9,0.6){\tiny$\Na_1$}
\rput[tl](-3.7,0.6){\tiny$\Ma_2$}
\rput[tl](-2.6,0.6){\tiny$\Na_2$}
\rput[tl](-1.5,0.6){\tiny$\Ma_3$}
\rput[tl](1.3,1.7){\tiny$\Ta_i$}

\begin{scriptsize}
\psdots[dotstyle=*](-6,3)
\psdots[dotstyle=*](-6,2)
\psdots[dotstyle=*](-5,2)
\psdots[dotstyle=*](-4,2)
\psdots[dotstyle=*](-3,2)
\psdots[dotstyle=*](-6,1)
\psdots[dotstyle=*](-5,1)
\psdots[dotstyle=*](-4,1)
\psdots[dotstyle=*](-3,1)
\psdots[dotstyle=*](-2,1)
\end{scriptsize}
\end{pspicture*}
\vskip -9.4cm
\caption{A standard $\k$-configuration of type $(1,b,c)$}\label{figuragenerale1}
\end{figure}
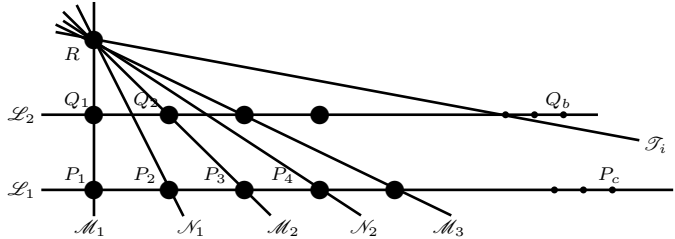

Let  $P_i=[1: i-1:0]$, for $1\le i\le c$,  $Q_i=[1:i-1:1]$, for $1\le i\le b$, and $R=[1:0:2]$ be the points of $\X$ (see Definition \ref{standardk}). 

We denote by  
$$
\begin{array}{llllllllllllllllll}
\La_1 & \text{be the line through  $P_1,P_2,\dots,P_{c}$;} \\
\La_2 & \text{be the line through  $Q_1,Q_2,\dots,Q_{b}$;} \\
\Ma_1 & \text{be the line through  $P_1,Q_1,R$;} \\
\Ma_2 & \text{be the line through  $P_3,Q_2,R$;} \\
   & \hskip 2 cm \vdots \\
  \Ma_i& \text{be the line through  $P_{2i-1},Q_i,R$,
       for $i\leq b$ and $2i\leq c+1$;}\\
\Na_1 & \text{be the line through  $P_2,R$;} \\
\Na_2 & \text{be the line through  $P_4,R$;} \\
      & \hskip 2 cm \vdots \\
 \Na_i  & \text{be the line through  $P_{2i},R$, \ for  $2i\leq c$;}\\

 \Ta_i &  \text{be the line through  $Q_{i},R$,  for $i\leq b$ and $2i\geq c+2$.}\\
\end{array}
$$
Note that each line $\Ma_i$ contains three points of $\X$, whereas the lines $\Na_i$ and  $\Ta_i$ contain two points of $\X$.

\begin{thm}\label{MMcpari} Let $\X$ be a { standard} $\k$-configuration of type  $(1,b,c)$. If $c$ is even and $c\le 2b-4$, then 
$$
\widehat\alpha(I_\X)=\frac{6b+3c-4}{2b+c}.
$$
\end{thm}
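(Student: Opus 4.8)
The plan is to apply verbatim the three-step method of Section~\ref{M20210723-1} with the parameters $\mu = 2b+c$ and $d = 6b+3c-4$, so that $\widehat\alpha(I_\X) = d/\mu = \frac{6b+3c-4}{2b+c}$. The key is to exhibit the right curve $\Fa$ of degree $d$ vanishing to order exactly $\mu$ at every point of $\X$, and then to kill one extra degree. For \emph{Step 1}, I would build $\Fa$ as a union of the lines in the notation set up just before the statement: roughly, a high multiple of $\La_1$ (the line through the $P_i$), a high multiple of $\La_2$ (the line through the $Q_j$), copies of the three-point lines $\Ma_i$, and copies of the two-point lines $\Na_i$ and $\Ta_i$, with the multiplicities chosen so that each of the points $P_i$, $Q_j$, $R$ is hit to total order exactly $\mu$. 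Because $c$ is even and $c \le 2b-4$, every $P_i$ ($1\le i\le c$) lies on exactly one $\Ma$ or one $\Na$, each $Q_j$ lies on exactly one $\Ma$, and every $\Ta_i$ is vacuous (the condition $2i \ge c+2$ together with $i\le b$ never forces a $\Ta$-line when $c$ is even and $\le 2b-4$ — this parity bookkeeping has to be checked). Setting up the linear system in the line-multiplicities and solving it (degree count plus "order $\mu$ at each of $P_i$, $Q_j$, $R$") is routine linear algebra once the combinatorics of which lines pass through which points is pinned down; this fixes $d$ and confirms $[I_{m\mu\X}]_{md} \ne \{0\}$ for all $m$ via $m\Fa$.

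For \emph{Step 2} I would set $\bar m = \min\{m \mid [I_{m\mu\X}]_{md-1}\neq\{0\}\}$ and derive a contradiction. First, $\bar m \ne 1$: for $m=1$ one shows $[I_{\mu\X}]_{d-1}=\{0\}$ directly, by applying Lemma~\ref{L:20210429-201} to the three collinear families on $\La_1$, $\La_2$, and each $\Ma_i$ (and to the lines through $R$) to force every line of $\Fa$ to be a fixed component of the degree-$(d-1)$ system with its full multiplicity, which is impossible since $\deg\Fa = d > d-1$. For $\bar m > 1$: apply Lemma~\ref{L:20210429-201} repeatedly — to the points on $\La_1$ (there are $c \ge s$ of them relative to the relevant count), to those on $\La_2$, and to each collinear triple on an $\Ma_i$, and likewise to the collinear pairs through $R$ on the $\Na_i$ — to conclude that each component line appears in the fixed locus of $[I_{\bar m\mu\X}]_{\bar md-1}$ with at least its multiplicity in $\Fa$, hence $\Fa$ is a fixed component. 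Removing $\Fa$ gives
\[
\dim[I_{\bar m\mu\X}]_{\bar md-1} = \dim[I_{\bar m\mu\X - \Fa}]_{\bar md-1-d} = \dim[I_{(\bar m-1)\mu\X}]_{(\bar m-1)d-1},
\]
where the middle equality uses that $\Fa$ meets each point of $\X$ to order exactly $\mu$ and that the only intersections among the relevant lines occurring on $\X$ are the prescribed ones. The right-hand side is $0$ by minimality of $\bar m$, a contradiction. Then \emph{Step 3} is immediate from Lemma~\ref{L:20210722-1}.

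The main obstacle, I expect, is \emph{Step 2} — and within it, two points. The first is the base case $\bar m \ne 1$, where one must rule out a degree-$(d-1)$ curve through the single fat scheme $\mu\X$; here the multiplicities of the various lines in $\Fa$ are large, so the ceiling estimates in Lemma~\ref{L:20210429-201} must be shown to account for \emph{all} of degree $d-1$, leaving no room. The second is making the repeated application of Lemma~\ref{L:20210429-201} in the case $\bar m>1$ genuinely recover the \emph{full} multiplicity of each line of $\Fa$ (not just multiplicity $\ge 1$): this requires peeling the fixed components off in the right order — typically strip $\La_1$ and $\La_2$ down partially, use the freed-up multiplicity at the $P_i$, $Q_j$ to run Lemma~\ref{L:20210429-201} on the $\Ma_i$ and $\Na_i$, then return to $\La_1$, $\La_2$ — and keeping the arithmetic of the ceilings consistent with $\mu = 2b+c$ and $d = 6b+3c-4$ throughout. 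The constraints $c$ even and $c \le 2b-4$ are exactly what is needed to keep this bookkeeping clean (no $\Ta$-lines intrude, and the $\Ma$/$\Na$ split of the $P_i$ is balanced), which is why the neighboring cases get separate theorems. Since the statement is flagged as one whose proof closely parallels others in the section, I would present the full details for one representative reduction and refer to the Appendix for the remaining near-identical estimates.
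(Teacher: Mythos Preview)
Your overall strategy matches the paper's exactly: build $\Fa$ from the configuration's lines, then peel off fixed components via Lemma~\ref{L:20210429-201}. However, your Step~1 contains a combinatorial error that would prevent the construction from working. You claim the $\Ta_i$ lines are vacuous under the hypotheses $c$ even, $c \le 2b-4$; in fact the opposite holds. The line $\Ma_i$ (through $P_{2i-1}$, $Q_i$, $R$) exists only for $i \le c/2$, so the points $Q_j$ with $c/2 < j \le b$ --- and there are at least two of them, since $c \le 2b-4$ --- lie on no $\Ma$-line. Without the lines $\Ta_j$ (through $Q_j$ and $R$) for these $j$, your curve would meet each such $Q_j$ only to order $\tfrac{2b+c-2}{2}$ via $\La_2$, one short of $\mu=\tfrac{2b+c}{2}$; and $R$ would be hit only to order $c < \tfrac{2b+c}{2}$. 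The paper's curve is
\[
\Fa = \tfrac{2b+c-2}{2}\,\La_1 + \tfrac{2b+c-2}{2}\,\La_2 + \sum_{i=1}^{c/2}\Ma_i + \sum_{i=1}^{c/2}\Na_i + \sum_{i=c/2+1}^{b}\Ta_i,
\]
and the $\Ta$-summand is essential. (Your $\mu,d$ are twice the paper's; this is harmless, since only the ratio matters and $c$ even makes the halved values integral.)

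For Step~2 the paper's peeling order is $\La_1, \La_2, \La_1, \La_2$ --- two alternating rounds, yielding total fixed multiplicity $\tfrac{2b+c-4}{2}m$ on each --- followed by a single pass over the $\Ma_i, \Na_i, \Ta_i$; there is no return to $\La_1,\La_2$ afterward. For $\bar m \ge 2$ one has $\tfrac{2b+c-4}{2}\bar m \ge \tfrac{2b+c-2}{2}$, enough to absorb all of $\Fa$. The base case $\bar m=1$ is handled by observing that the forced fixed locus $\Fa'$ already has degree $\tfrac{6b+3c-8}{2}$, and removing it leaves a degree-$1$ system through $P_1+\cdots+P_c+Q_1+\cdots+Q_b$, which is empty. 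Finally, this theorem is the one whose proof the paper gives \emph{in full}; it is the later theorems of Section~\ref{S1bc} that are deferred to the Appendix.
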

 
\begin{proof}
Define
$$
\Fa=\frac{2b+c-2}{2}\La_1 +\frac{2b+c-2}{2}\La_2 +\Ma_1+\cdots+\Ma_{\frac{c}{2}}+\Na_1+\cdots+\Na_{\frac{c}{2}}+\Ta_{\frac{c+2}{2}}+  \cdots+\Ta_{b}.
$$
$\Fa$ is the union of   $\frac{6b+3c-4}{2}$ lines, and $\Fa$  contains each point of   $\X$ with  multiplicity  exactly $\frac{2b+c}{2}$.
Hence, for $m >0$,
$$
m \Fa\in \big[I_{\frac{2b+c}{2}m \X}\big]_{\frac{6b+3c-4}{2}m }.
$$
Now we prove that for each $m >0$,
$$
\dim \big[I_{\frac{2b+c}{2}m \X}\big]_{\frac{6b+3c-4}{2}m -1 }=0,
$$
and the conclusion will  follow from Lemma \ref {L:20210722-1}.

Assume that for some $m$, $\big[I_{\frac{2b+c}{2}m \X}\big]_{\frac{6b+3c-4}{2}m -1 }\neq\{0\}$, thus by Lemma \ref {L:20210429-201},  by recalling that $c >b$, we get that $\La_1$ is a fixed component of multiplicity at least 
\begin{equation}\label{MM1} 
\left\lceil \frac{\frac{2b+c}{2}cm-\frac{6b+3c-4}{2}m +1}{c-1}\right\rceil=
\left\lceil \frac{((2b+c-6)(c-1)+4c-4b-2)m +2}{2(c-1)}\right\rceil \ge {\frac{2b+c-6}{2} m} 
\end{equation}
for the plane curves of the linear system
$\big[I_{\frac{2b+c}{2}m \X}\big]_{\frac{6b+3c-4}{2}m -1 }$.

By removing  $ {\frac{2b+c-6}{2} m} \La_1$ from those curves, we get 
$$\dim [I_{\frac{2b+c}{2}m \X}]_{\frac{6b+3c-4}{2}m -1 }=
\dim [I_{\frac{2b+c}{2}m \X - \frac{2b+c-6}{2}m \La_1}]_{\frac{6b+3c-4}{2}m -1 -{\frac{2b+c-6}{2} m}}.
$$
If the dimension above is zero, we get a contradiction and we are done. If it is different from zero, 
by Lemma \ref {L:20210429-201}, by observing that  
$\frac{6b+3c-4}{2}m -1 -{\frac{2b+c-6}{2} m}=(2b+c+1)m -1$, we get that $\La_2$ is a fixed component of multiplicity at least 
\begin{equation}\label{MM2} 
\left\lceil \frac{\frac{2b+c}{2} bm - ( 2b+c+1 )m +1}{b-1}\right\rceil=
\left\lceil \frac{((2b+c-6)(b-1)+4b-c-8)m +2}{2(b-1)}\right\rceil \ge {\frac{2b+c-6}{2}m} ,
\end{equation}
for the plane curves  of  the linear system
$[I_{\frac{2b+c}{2}m \X - \frac{2b+c-6}{2}m \La_1}]_{(2b+c+1)m -1}.
$
By removing $\frac{2b+c-6}{2}m \La_2$ from those curves, we get 
\begin{equation}\label{MYS1} 
\dim [I_{\frac{2b+c}{2}m \X - \frac{2b+c-6}{2}m \La_1}]_{(2b+c+1)m -1}=
\dim [I_{\frac{2b+c}{2}m \X - \frac{2b+c-6}{2}m \La_1- \frac{2b+c-6}{2}m \La_2}]_{(2b+c+1)m -1 -{\frac{2b+c-6}{2} m}},
\end{equation}
where 
$$\frac{2b+c}{2}m \X - \frac{2b+c-6}{2}m \La_1- \frac{2b+c-6}{2}m \La_2=\frac{2b+c}{2}m R+ \sum_{P_i \in \La_1} 3mP_i + \sum_{Q_i \in \La_2} 3mQ_i .$$
If the dimension in \eqref{MYS1} is zero, we get a contradiction and we are done. If it is different from zero, 
by Lemma \ref {L:20210429-201}, by observing that  $(2b+c+1)m -1 -{\frac{2b+c-6}{2} m}= \frac{2b+c+8}{2}m -1$, and
$$ -2b+5c-8 =-2b+6c-c-8 \geq -2b+6c-(2b-4)-8 =4(c-b)+2c-4 \geq 2c
,$$
we have that $\La_1$ is a fixed component of multiplicity at least 
\begin{equation}\label{MM3} 
\left\lceil \frac{3 c m  -\frac{2b+c+8}{2}m +1}{c-1}\right\rceil=
\left\lceil \frac{(-2b+5c-8)m +2}{2(c-1)}\right\rceil \ge {m} ,
\end{equation}
for the  curves  of  the linear system $\big[I_{\frac{2b+c}{2}m R+ \sum_{P_i \in \La_1} 3mP_i + \sum_{Q_i \in \La_2} 3mQ_i }\big]_{\frac{2b+c+8}{2}m  -1}$.
We now remove $m \La_1$ and we get
$$
\begin{array}{llllll}
&\dim [I_{\frac{2b+c}{2}m R+ \sum_{P_i \in \La_1} 3mP_i + \sum_{Q_i \in \La_2} 3mQ_i }]_{\frac{2b+c+8}{2}m  -1}\\
=&
\dim [I_{\frac{2b+c}{2}m R+ \sum_{P_i \in \La_1} 2mP_i + \sum_{Q_i \in \La_2} 3mQ_i }]_{\frac{2b+c+8}{2}m  -1-m}.
\end{array}
$$
Finally, if the dimension above is zero,  we get a contradiction and we are done. If it is different from zero, then,
by Lemma \ref {L:20210429-201}, by recalling that $2b \geq c+ 4$, we have that $\La_2$ is a fixed component of multiplicity at least 
\begin{equation}\label{MM4} 
\left\lceil \frac{3m  b -\frac{2b+c+6}{2}m +1}{b-1}\right\rceil=
\left\lceil \frac{(4b-c-6)m +2}{2(b-1)}\right\rceil \geq
\left\lceil \frac{(2b+c+4-c-6)m +2}{2(b-1)}\right\rceil\ge {m} ,
\end{equation}
for the  curves  of  the linear system $ \big[I_{\frac{2b+c}{2}m R+ \sum_{P_i \in \La_1} 2mP_i + \sum_{Q_i \in \La_2} 3mQ_i }\big]_{\frac{2b+c+6}{2}m  -1}$.

Hence
$$
\begin{array}{llllll}
&\dim \big[I_{\frac{2b+c}{2}m R+ \sum_{P_i \in \La_1} 2mP_i + \sum_{Q_i \in \La_2} 3mQ_i }\big]_{\frac{2b+c+6}{2}m  -1}\\
=&
\dim \big[I_{\frac{2b+c}{2}m R+ \sum_{P_i \in \La_1} 2mP_i + \sum_{Q_i \in \La_2} 2mQ_i }\big]_{\frac{2b+c+4}{2}m  -1}.
\end{array}
$$
If this dimension is different from zero, then we go on and we apply   Lemma \ref {L:20210429-201} to the lines $\Ma_i$, $\Na_i$, and  $\Ta_i$. Since
\begin {equation}\label{MM5}
\left\lceil \frac{\frac{2b+c}{2}m  +2m  +2m-\frac{2b+c+4}{2}m +1}{2}\right\rceil=
\left\lceil \frac{2m +1}{2}\right\rceil>1, \ 
\hbox{and} \ 
\left\lceil \frac{\frac{2b+c}{2}m  +2m-\-\frac{2b+c+4}{2}m +1}{1}\right\rceil=1,
\end{equation}
the lines $\Ma_i$,   $\Na_i$, and $\Ta_i$ are fixed components for the  curves  of  the linear system 
$$\big[I_{\frac{2b+c}{2}m R+ \sum_{P_i \in \La_1} 2mP_i + \sum_{Q_i \in \La_2} 2mQ_i }\big]_{\frac{2b+c+4}{2}m  -1}.$$
Hence, from the computations in
(\ref {MM1}), (\ref {MM2}), (\ref {MM3}), 
(\ref {MM4}), and (\ref {MM5}),
we get that  the following curve 
\begin{equation}\label{compfissa-per-c-pari}
\frac{2b+c-4}{2}m \La_1  +\frac{2b+c-4}{2}m \La_2  +\Ma_1+\cdots+\Ma_{\frac{c}{2}}+\Na_1+\cdots+\Na_{\frac{c}{2}}+\Ta_{\frac{c+2}{2}}+  \cdots+\Ta_{b}
\end{equation}
is a fixed component for  the curves defined by  the linear system $\big[I_{\frac{2b+c}{2}m \X}\big]_{\frac{6b+3c-4}{2}m -1 }$.

Now set 
\begin {equation} \label {barm-per-c-pari}  \bar m =\min \{ m \ | \ [I_{\frac{2b+c}{2}m \X}]_{\frac{6b+3c-4}{2}m -1 }\neq\{0\} \}.
\end {equation}

First observe that $\bar m \neq 1$. In fact for 
$m =1$, the curve $\Fa'$ of degree $\frac {6b +3c-8}{2}$
$$\Fa'=\frac{2b+c-4}{2} \La_1  +\frac{2b+c-4}{2}\La_2   +\Ma_1+\cdots+\Ma_{\frac{c}{2}}+\Na_1+\cdots+\Na_{\frac{c}{2}}+\Ta_{\frac{c+2}{2}}+  \cdots+\Ta_{b}
$$
should be a fixed component for the linear system
$[I_{\frac{2b+c}{2} \X}]_{\frac{6b+3c-4}{2} -1 }$, so
$$\dim [I_{\frac{2b+c}{2} \X}]_{\frac{6b+3c-4}{2} -1 }=
\dim [I_{\frac{2b+c}{2} \X - \Fa'}]_{\frac{6b+3c-4}{2} -1- \frac {6b+3c -8}{2}}=
\dim [I_{P_1+\cdots+P_{c}+Q_1+\cdots+Q_{b}}]_1=0,$$
a contradiction.

So $\bar m>1$. By (\ref{compfissa-per-c-pari}), since $\frac{2b+c-4}{2}\bar m \geq \frac{2b+c-2}{2}$, we get that
$\Fa$ is a fixed component for the linear system
$[I_{\frac{2b+c}{2} \bar m\X}]_{\frac{6b+3c-4}{2}\bar m -1 }$, hence, by recalling that 
$\deg \Fa=\frac{6b+3c-4}{2}$ and $\Fa$  contains each point of   $\X$ with  multiplicity $\frac{2b+c}{2}$, we get
$$\dim [I_{\frac{2b+c}{2} \bar m\X}]_{\frac{6b+3c-4}{2}\bar m -1 }=
\dim [I_{\frac{2b+c}{2} \bar m\X- \Fa}]_{\frac{6b+3c-4}{2}\bar m -1- \frac {6b+3c -4}{2}}=
\dim [I_{\frac{2b+c}{2} (\bar m-1)\X}]_{\frac{6b+3c-4}{2}(\bar m -1)-1},
$$
which is zero by  (\ref {barm-per-c-pari}), a contradiction.
\end{proof}

\begin{thm}\label{MMcdispari} Let $\X$ be a { standard} $\k$-configuration of type  $(1,b,c)$. If $c$ is odd,  and $b+1 < c \le 2b-3$, then 
$$
\widehat\alpha(I_\X)=\frac{6b+3c-7}{2b+c-1}.
$$
\end{thm}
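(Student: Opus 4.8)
The plan is to follow verbatim the three-step method of Section~\ref{M20210723-1}, using an auxiliary curve $\Fa$ entirely analogous to the one used in the proof of Theorem~\ref{MMcpari}, but adjusted for the parity of $c$. Concretely, since $c$ is odd, the lines $\Ma_i$ exist for $i\le \frac{c+1}{2}$ (because $2i\le c+1$ becomes $i\le\frac{c+1}{2}$), the lines $\Na_i$ exist for $i\le\frac{c-1}{2}$, and the lines $\Ta_i$ for $\frac{c+1}{2}<i\le b$, i.e.\ for $i$ from $\frac{c+3}{2}$ to $b$; note $Q_{(c+1)/2}$ lies on $\Ma_{(c+1)/2}$, which is the line through $P_c$, $Q_{(c+1)/2}$, $R$. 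I would set
$$
\Fa=\frac{2b+c-3}{2}\La_1+\frac{2b+c-3}{2}\La_2+\Ma_1+\cdots+\Ma_{\frac{c+1}{2}}+\Na_1+\cdots+\Na_{\frac{c-1}{2}}+\Ta_{\frac{c+3}{2}}+\cdots+\Ta_b,
$$
which is a union of $\frac{2(2b+c-3)}{2}+\frac{c+1}{2}+\frac{c-1}{2}+\left(b-\frac{c+1}{2}\right)=\frac{6b+3c-7}{2}$ lines (here $2b+c-3$ is even since $c$ is odd), and which passes through each point of $\X$ with multiplicity exactly $\frac{2b+c-1}{2}$: each $P_i$ with $i$ odd lies on $\La_1$ and on one $\Ma_j$; each $P_i$ with $i$ even lies on $\La_1$ and one $\Na_j$; each $Q_i$ lies on $\La_2$ and on exactly one of $\Ma_j,\Ta_j$; and $R$ lies on all the $\Ma_j,\Na_j,\Ta_j$, which number $\frac{c+1}{2}+\frac{c-1}{2}+\left(b-\frac{c+1}{2}\right)=b+\frac{c-1}{2}=\frac{2b+c-1}{2}$, matching the coefficient on the $\La$'s. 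This gives Step~1: $m\Fa\in\big[I_{\frac{2b+c-1}{2}m\X}\big]_{\frac{6b+3c-7}{2}m}$, so the initial degree is at most $\frac{6b+3c-7}{2}m$.

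Step~2 is the heart of the argument and exactly mirrors the chain of applications of Lemma~\ref{L:20210429-201} in the proof of Theorem~\ref{MMcpari}. Assuming $\big[I_{\frac{2b+c-1}{2}m\X}\big]_{\frac{6b+3c-7}{2}m-1}\neq\{0\}$, I would peel off fixed components in the order: $\La_1$ with multiplicity $\ge\frac{2b+c-7}{2}m$ (using $c>b$ so $c$ is large enough), then $\La_2$ with multiplicity $\ge\frac{2b+c-7}{2}m$, bringing the $P_i$ and $Q_i$ down to multiplicity $3m$ and the residual degree down; then a second round removes $\La_1$ and $\La_2$ each once more (using the hypothesis $c\le 2b-3$, equivalently $2b\ge c+3$, to verify the relevant ceilings are $\ge m$, exactly as the inequality $-2b+5c-8\ge 2c$ and $4b-c-6\ge 2(b-1)$ were used in the even case); and finally the lines $\Ma_i$, $\Na_i$, $\Ta_i$ are each forced as fixed components by the analogue of \eqref{MM5}. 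This shows the curve
$$
\frac{2b+c-5}{2}m\La_1+\frac{2b+c-5}{2}m\La_2+\Ma_1+\cdots+\Ma_{\frac{c+1}{2}}+\Na_1+\cdots+\Na_{\frac{c-1}{2}}+\Ta_{\frac{c+3}{2}}+\cdots+\Ta_b
$$
is a fixed component of $\big[I_{\frac{2b+c-1}{2}m\X}\big]_{\frac{6b+3c-7}{2}m-1}$. Setting $\bar m=\min\{m\mid [I_{\frac{2b+c-1}{2}m\X}]_{\frac{6b+3c-7}{2}m-1}\neq\{0\}\}$, I check $\bar m\neq 1$ by removing the degree-$\frac{6b+3c-9}{2}$ curve with coefficients $\frac{2b+c-5}{2}$ on the $\La$'s: the residual is $\dim[I_{P_1+\cdots+P_c+Q_1+\cdots+Q_b}]_1=0$ since $2b+c$ noncollinear points impose independent conditions on conics (they span more than a pencil once $2b+c\ge 3$, and more carefully, no line contains more than $\max(b,c)$ of them while $\dim[I_\emptyset]_1=3$, so already two generic such points kill it — here one uses that $P_1,Q_1,R$ are configured so that the points are not all on two lines). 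Since $\bar m>1$ forces $\frac{2b+c-5}{2}\bar m\ge\frac{2b+c-3}{2}$, the full $\Fa$ is a fixed component, and removing it yields $\dim[I_{\frac{2b+c-1}{2}(\bar m-1)\X}]_{\frac{6b+3c-7}{2}(\bar m-1)-1}$, contradicting minimality of $\bar m$.

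Step~3 is then immediate from Lemma~\ref{L:20210722-1} with $\mu=\frac{2b+c-1}{2}$ and $d=\frac{6b+3c-7}{2}$: the initial degree of $m\mu\X$ is $md$ for all $m>0$, hence $\widehat\alpha(I_\X)=\frac{d}{\mu}=\frac{6b+3c-7}{2b+c-1}$. The main obstacle I anticipate is purely bookkeeping: verifying that every ceiling appearing in the iterated application of Lemma~\ref{L:20210429-201} is indeed $\ge$ the claimed multiplicity, which requires the hypotheses $b+1<c$ and $c\le 2b-3$ to be used at precisely the right places (the lower bound $c>b+1$ to keep $\La_1$'s multiplicity positive throughout and to ensure the $\Na_i/\Ta_i$ count works out, the upper bound $c\le 2b-3$ to ensure $\La_2$ can be removed a second time). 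A secondary subtlety, as in the even case, is confirming the precise residual multiplicities after each removal — that $\Fa$ meets each point of $\X$ with multiplicity \emph{exactly} $\frac{2b+c-1}{2}$, not more — which depends on the standard $\k$-configuration's coordinates guaranteeing that, e.g., no three of the $P_i$'s become collinear with $R$ in an unexpected way and that $L\cap M\notin\X$ type conditions hold; these follow from Definition~\ref{standardk} and the geometry recorded in Figure~\ref{figuragenerale1}. Because this proof is so close to that of Theorem~\ref{MMcpari}, the detailed verification can be relegated to the Appendix as announced in the introduction.
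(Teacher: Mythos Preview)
Your proposal is correct in outline, and in particular your curve $\Fa$ is exactly the one the paper writes down. However, the paper's proof takes a much shorter route for the lower bound. After exhibiting $\Fa$ (which gives $\widehat\alpha(I_\X)\le\frac{6b+3c-7}{2b+c-1}$), the paper does \emph{not} redo the peeling argument of Theorem~\ref{MMcpari}. Instead it observes that, since $c-1>b$, the standard $\k$-configuration $\X'$ of type $(1,b,c-1)$ is contained in $\X$; and since $c-1$ is even with $c-1\le 2b-4$, Theorem~\ref{MMcpari} applies directly to $\X'$ and gives $\widehat\alpha(I_{\X'})=\frac{6b+3(c-1)-4}{2b+(c-1)}=\frac{6b+3c-7}{2b+c-1}$. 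Containment then yields $\widehat\alpha(I_\X)\ge\widehat\alpha(I_{\X'})$, and the two bounds match. This is the content of Corollary~\ref{ME20210909}: the odd case contributes nothing new because the Waldschmidt constant does not move when passing from $(1,b,c-1)$ to $(1,b,c)$.

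Your approach---running the full Step~2 argument with the adjusted parities---would also succeed, and the ceiling checks you outline (e.g.\ that $c\le 2b-3$ gives $2b-c-3\ge 0$ in the analogue of \eqref{MM4}) do go through. But it duplicates the labor of Theorem~\ref{MMcpari}, whereas the paper's containment trick reduces the odd case to the even one in two lines. The payoff of the paper's method is brevity; the payoff of yours is self-containment (it does not rely on Theorem~\ref{MMcpari} already being proved), though in the paper's linear ordering that is not needed.
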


\begin{proof}
Let
$$
\Fa=\frac{2b+c-3}{2}\La_1 +\frac{2b+c-3}{2}\La_2 +\Ma_1+\cdots+\Ma_{\frac{c+1}{2}}+\Na_1+\cdots+\Na_{\frac{c-1}{2}}+\Ta_{\frac{c+3}{2}}+  \cdots+\Ta_{b}.
$$
$\Fa$ is the union of   $\frac{6b+3c-7}{2}$ lines, and $\Fa$  contains each point of   $\X$ with  multiplicity  exactly $\frac{2b+c-1}{2}$.
Hence, for $m >0$,
$$
m \Fa\in \big[I_{\frac{2b+c-1}{2}m \X}\big]_{\frac{6b+3c-7}{2}m }.
$$
It follows that $\widehat\alpha(I_\X)\leq  \frac{6b+3c-7}{2b+c-1}.$

Now, by recalling that $c-1>b$, we  can consider the standard $\k$-configuration $\X'$ of type $(1,b,c-1)$,
which is  contained in  the standard $\k$-configuration  $\X$. Hence 
$\widehat\alpha(I_\X) \geq \widehat\alpha(I_{\X'}) $.
Since  $c-1 \leq 2b-4 $ and $c-1$ is even, by Theorem  \ref{MMcpari} we have that 
$\widehat\alpha(I_{\X'})=\frac{6b+3(c-1)-4}{2b+(c-1)}= \frac{6b+3c-7}{2b+c-1},$ and  the conclusion  follows.
\end{proof}

\begin{cor}\label{ME20210909}
Let $\X$ and $\Y$ be   standard $\k$-configurations  of type $(1,b,c)$ and $(1,b,c+1)$,
respectively. If  $c$ is  even, and $c \leq 2b-4$, then
  $ \widehat\alpha(I_\X)= \widehat\alpha(I_\Y)$.
 \end{cor}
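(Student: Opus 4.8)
The plan is to obtain the claim by simply equating the two closed formulas provided by Theorems~\ref{MMcpari} and~\ref{MMcdispari}. Since $\X$ has type $(1,b,c)$ with $c$ even and $c\le 2b-4$, Theorem~\ref{MMcpari} gives directly
$$
\widehat\alpha(I_\X)=\frac{6b+3c-4}{2b+c}.
$$

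The only point requiring attention is verifying that $\Y$ falls under Theorem~\ref{MMcdispari} with the role of its parameter ``$c$'' played by $c+1$. First I would check the hypotheses: because $c$ is even, $c+1$ is odd; because a standard $\k$-configuration of type $(1,b,c)$ has $1<b<c$, we get $c+1>b+1$; and because $c\le 2b-4$ we get $c+1\le 2b-3$. Hence Theorem~\ref{MMcdispari} applies to $\Y$ and yields
$$
\widehat\alpha(I_\Y)=\frac{6b+3(c+1)-7}{2b+(c+1)-1}=\frac{6b+3c-4}{2b+c}.
$$
Comparing the two displays gives $\widehat\alpha(I_\X)=\widehat\alpha(I_\Y)$, as required.

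There is essentially no real obstacle here: the work is a one-line computation once the elementary inequality chain $b+1<c+1\le 2b-3$ is noted. In fact the equality is conceptually transparent, since the proof of Theorem~\ref{MMcdispari} establishes the value for type $(1,b,c+1)$ precisely by sandwiching $\widehat\alpha(I_\Y)$ between the Waldschmidt constant of the sub-configuration of type $(1,b,(c+1)-1)=(1,b,c)$ (from below, by containment) and a union-of-lines upper bound (from above), and these two bounds coincide; so the coincidence $\widehat\alpha(I_\X)=\widehat\alpha(I_\Y)$ is already built into that argument.
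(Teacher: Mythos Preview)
Your proof is correct and follows exactly the same route as the paper's own argument: apply Theorem~\ref{MMcpari} to $\X$ and Theorem~\ref{MMcdispari} to $\Y$, then observe that the two formulas agree. Your explicit verification of the hypotheses $b+1<c+1\le 2b-3$ and your closing remark about the sandwich argument are welcome additions, but the core argument is identical to the paper's.
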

 \begin{proof} By Theorem \ref{MMcpari} we have that $\widehat\alpha(I_\X)=\frac{6b+3c-4}{2b+c}.$ Now by applying Theorem \ref{MMcdispari} to $\Y$ we get
 $\widehat\alpha(I_\Y) = \frac{6b+3(c+1)-7}{2b+(c+1)-1}=\frac{6b+3c-4}{2b+c}=\widehat\alpha(I_\X)$.
\end{proof}

From Theorems \ref{MMcpari} and \ref{MMcdispari},  we can compute the Waldschmidt constants of any standard $\k$-configurations of type  $(1,b,c)$, 
when $c\le 2b-3$, except for the configuration $\X$ of type $(1,b,b+1)$ with $b$ even. In the following theorem we will compute the Waldschmidt constant of this type of configuration, and we will find that $\widehat\alpha(I_\X) =\frac{9b-4}{3b}$.

 Alternatively  we could have considered the subscheme $\Y =\X- P_{b+1}$, and  computed the Waldschmidt constant of $\Y$, and found that $\widehat\alpha(I_\Y)=\frac{9b-4}{3b}$. With this method the conclusion would be followed from a theorem analogous to Theorem  \ref{MMcdispari}.

\begin{thm}\label{T:20210606-402}Let $\X$ be a { standard} $\k$-configuration  of type $(1,b,b+1)$. If $b \geq 4$ is an even  integer, then 
$$
\widehat\alpha(I_\X)=\frac{9b-4}{3b}.
$$
\end{thm}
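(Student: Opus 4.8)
The plan is to follow exactly the three‑step method laid out in Section~\ref{M20210723-1}, adapting it to the type $(1,b,b+1)$ with $b\ge 4$ even. Write $P_1,\dots,P_{b+1}$ on $\La_1$, $Q_1,\dots,Q_b$ on $\La_2$, and $R$; with $c=b+1$ and $b$ even, the index bounds give $\Ma_1,\dots,\Ma_{(b+2)/2}$, $\Na_1,\dots,\Na_{b/2}$, and no $\Ta_i$ (since $2i\ge c+2=b+3$ would force $i>b$). The target ratio $\tfrac{9b-4}{3b}$ suggests taking $\mu=\tfrac{3b}{2}$ and $d=\tfrac{9b-4}{2}$. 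For Step~1 I would set
$$
\Fa=\frac{3b-4}{2}\La_1+\frac{3b-4}{2}\La_2+\Ma_1+\cdots+\Ma_{\frac{b+2}{2}}+\Na_1+\cdots+\Na_{\frac{b}{2}},
$$
a union of $\tfrac{3b-4}{2}+\tfrac{3b-4}{2}+\tfrac{b+2}{2}+\tfrac{b}{2}=\tfrac{9b-4}{2}$ lines, and check degree by degree that every point of $\X$ lies on $\Fa$ with multiplicity exactly $\tfrac{3b}{2}$: each $P_{2i-1}$ picks up $\La_1+\Ma_i$, each $P_{2i}$ picks up $\La_1+\Na_i$, each $Q_i$ picks up $\La_2+\Ma_i$, and $R$ lies on every $\Ma_i$ and $\Na_i$, i.e.\ on $\tfrac{b+2}{2}+\tfrac b2=\tfrac{3b+2}{2}$... — here one must double‑check the bookkeeping for $R$, since naively that exceeds $\tfrac{3b}{2}$; the resolution (as in Theorem~\ref{MMcpari}) is that $R$ sits on $\La_1$ or $\La_2$ only through the $\Ma,\Na$ count being balanced against the $P_i,Q_i$ count, and the even‑$b$ parity is exactly what makes $\mu=\tfrac{3b}2$ an integer. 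This gives $m\Fa\in[I_{\frac{3b}{2}m\X}]_{\frac{9b-4}{2}m}$ for all $m>0$.

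For Step~2 I would prove $\dim[I_{\frac{3b}{2}m\X}]_{\frac{9b-4}{2}m-1}=0$ by the same peeling argument: apply Lemma~\ref{L:20210429-201} repeatedly, first stripping $\La_1$ (using $c=b+1>b$ so the ceiling of $\tfrac{\frac{3b}{2}(b+1)m-(\frac{9b-4}{2}m-1)}{b}$ is at least $\tfrac{3b-6}{2}m$ or so), then $\La_2$, then $\La_1$ and $\La_2$ again to reduce the multiplicities along the two lines down to $2m$, then strip the $\Ma_i$ and $\Na_i$ each once (the relevant ceilings being $>1$ on the triple‑point lines and $=1$ on the double‑point lines, as in display~(\ref{MM5})). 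The cumulative effect is that the curve
$$
\frac{3b-4}{2}m\La_1+\frac{3b-4}{2}m\La_2+\Ma_1+\cdots+\Ma_{\frac{b+2}{2}}+\Na_1+\cdots+\Na_{\frac{b}{2}}
$$
is forced as a fixed component of $[I_{\frac{3b}{2}m\X}]_{\frac{9b-4}{2}m-1}$. Setting $\bar m=\min\{m\mid [I_{\frac{3b}{2}m\X}]_{\frac{9b-4}{2}m-1}\neq\{0\}\}$, the case $\bar m=1$ is killed directly: removing the degree‑$\tfrac{9b-8}{2}$ fixed curve leaves $[I_{P_1+\cdots+P_{b+1}+Q_1+\cdots+Q_b}]_1=0$; and for $\bar m>1$, since $\tfrac{3b-4}{2}\bar m\ge\tfrac{3b-4}{2}$... — actually one needs $\tfrac{3b-4}{2}\bar m\ge\tfrac{3b-4}{2}$ so that the full $\Fa$ (with coefficient $\tfrac{3b-4}{2}$, not $\tfrac{3b-2}{2}$ as in the even‑$c$ case) splits off, after which $\dim[I_{\frac{3b}{2}\bar m\X}]_{\frac{9b-4}{2}\bar m-1}=\dim[I_{\frac{3b}{2}(\bar m-1)\X}]_{\frac{9b-4}{2}(\bar m-1)-1}=0$, contradicting minimality. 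Step~3 is then immediate from Lemma~\ref{L:20210722-1}: $\widehat\alpha(I_\X)=\tfrac{d}{\mu}=\tfrac{(9b-4)/2}{(3b)/2}=\tfrac{9b-4}{3b}$.

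\textbf{The main obstacle} will be verifying the chain of ceiling inequalities in Step~2 with the precise constants — in particular making sure each intermediate linear system still has the expected fixed‑component behaviour when $c=b+1$ is as small as possible relative to $b$ (so that several of the slack terms like $5c-2b-8$ from Theorem~\ref{MMcpari} become tight or change sign). Because $c=b+1$ is the boundary case of Theorem~\ref{MMcdispari}'s range $b+1<c\le 2b-3$ (it is \emph{excluded} there), one cannot simply quote the subconfiguration trick; the arithmetic genuinely has to be redone, and the constraint $b\ge 4$ is presumably exactly what keeps all the relevant ceilings $\ge$ the claimed bounds. A secondary subtlety is the correct count of lines through $R$ in $\Fa$ and the verification that the multiplicity at $R$ is exactly $\tfrac{3b}{2}$ and not more — this is where I would be most careful, and I would cross‑check the final degree count $\deg\Fa=\tfrac{9b-4}{2}$ against the multiplicity‑$\tfrac{3b}{2}$ condition before trusting the skeleton above.
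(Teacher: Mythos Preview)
There is a genuine gap in Step~1: your curve $\Fa$ is wrong. Your assertion that there are ``no $\Ta_i$'' is incorrect --- with $c=b+1$ the condition $2i\ge c+2=b+3$ gives $i\ge\frac{b}{2}+2$ (not $i>b$), so the lines $\Ta_{\frac{b}{2}+2},\dots,\Ta_b$ do exist (there are $\frac{b}{2}-1\ge 1$ of them for $b\ge 4$). Without them, the points $Q_{\frac{b}{2}+2},\dots,Q_b$ lie only on $\La_2$ in your $\Fa$ and hence have multiplicity $\frac{3b-4}{2}$, not $\frac{3b}{2}$. Your degree count is also wrong: $\frac{3b-4}{2}+\frac{3b-4}{2}+\frac{b+2}{2}+\frac{b}{2}=4b-3$, not $\frac{9b-4}{2}$; and your computation $\frac{b+2}{2}+\frac{b}{2}=\frac{3b+2}{2}$ for the multiplicity at $R$ should read $b+1$. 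So your $\Fa$ has neither the claimed degree nor the claimed multiplicities.

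The paper takes
\[
\Fa=\tfrac{3b-2}{2}\,\La_1 +\tfrac{3b-2}{2}\,\La_2 +\Ma_1+\cdots+\Ma_{\frac{b}{2}+1}+\Na_1+\cdots+\Na_{\frac{b}{2}}+\Ta_{\frac{b}{2}+2}+\cdots+\Ta_{b},
\]
which has degree $\frac{9b-4}{2}$ and multiplicity exactly $\frac{3b}{2}$ at every point of $\X$ (in particular $R$ lies on all $\frac{b}{2}+1+\frac{b}{2}+\frac{b}{2}-1=\frac{3b}{2}$ of the $\Ma,\Na,\Ta$ lines). With this correct $\Fa$ your Step~2/Step~3 outline is essentially the paper's: peel $\frac{3b-6}{2}m\,\La_1$, then $\frac{3b-6}{2}m\,\La_2$, then $m\,\La_1$, then $m\,\La_2$, then one copy of each $\Ma_i,\Na_i,\Ta_i$; for $\bar m=1$ the ceiling excesses force all of $\Fa$ off (degree contradiction), and for $\bar m>1$ one has $\frac{3b-4}{2}\bar m\ge\frac{3b-2}{2}$, so the full $\Fa$ splits off and minimality of $\bar m$ gives the contradiction.
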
 
 
\begin{proof} The proof proceeds as in Theorem \ref{MMcpari}. See [Appendix \ref{appendice}, Proof of Theorem  \ref{T:20210606-402}] for more details.
\end{proof}

Now  we study  the standard $\k$-configurations  from  type $(1,b,2b-2)$ to  $(1,b,2b+1)$. In this range the Waldschmidt constant is strictly increasing.
A useful tool for the proofs is Lemma \ref{T:20210527-109}.
Also even if  the method is always the same,  we prefer 
to give some  details since the  proof is more tricky  than the previous cases.

\begin{thm}\label{M2b-2}
Let $\X$ be a standard $\k$-configuration  of type $(1,b,2b-2)$. Then
$$
\widehat \alpha(I_\X)=\frac{6b^2-14b+6}{2b^2-4b+1}.
$$
\end{thm}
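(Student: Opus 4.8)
The plan is to run the three-step method of Section~\ref{M20210723-1}. Set $d=6b^{2}-14b+6$ and $\mu=2b^{2}-4b+1$, so the claim is $\widehat\alpha(I_\X)=d/\mu$; note $d=3\mu-(2b-3)$, which is why the value is just below $3$. The novelty compared with Theorem~\ref{MMcpari} is that the auxiliary curve $\Fa$ cannot be a union of lines: besides large multiples of $\La_1$, of $\La_2$ and of the lines through $R$, it must carry copies of an irreducible curve of degree $b-1$, and this is where Lemma~\ref{T:20210527-109} enters. Concretely, I would apply that lemma with the line $\La_2$ carrying the $b-1$ points $Q_1,\dots,Q_{b-1}$, the line $\La_1$ carrying the $b-1$ points $P_2,P_4,\dots,P_{2b-2}$, and the fat point $(b-2)R$: in the standard configuration $R\notin\La_1\cup\La_2$, $\La_1\cap\La_2\notin\X$, and the only alignments with $R$ are $R,P_{2i-1},Q_i$, so no $R,Q_i,P_{2j}$ is collinear and the hypotheses hold; hence there is a unique curve $\Ca$ of degree $b-1$ through $(b-2)R+P_2+\cdots+P_{2b-2}+Q_1+\cdots+Q_{b-1}$, and $\Ca$ is irreducible.

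For Step~1 I would write down an explicit effective curve of degree $d$ (passing to a fixed integer multiple of $\mu$ and $d$ if the coefficients demand it) of the shape
$$\Fa \;=\; A\,\La_1+A'\,\La_2+\sum_i p_i\,\Ma_i+\sum_i q_i\,\Na_i+r\,\Ta_b+e\,\Ca ,$$
with nonnegative integer coefficients chosen so that $\Fa$ meets every point of $\X$ in multiplicity exactly $\mu$. The coefficients are essentially forced by the incidences: a point of $\La_1$ not on $\Ca$ is seen $A+p_i$ times, a point of $\La_1$ on $\Ca$ is seen $A+q_i+e$ times, a point of $\La_2$ is seen $A'+p_i+e$ or $A'+r$ times, and $R$ is seen $\sum_i p_i+\sum_i q_i+r+e(b-2)$ times; setting all of these equal to $\mu$ and matching $\deg\Fa=d$ pins the constants down. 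Then $m\Fa\in[I_{m\mu\X}]_{md}$ for all $m>0$, so $\widehat\alpha(I_\X)\le d/\mu$.

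Step~2 is to prove $[I_{m\mu\X}]_{md-1}=\{0\}$ for all $m\ge1$, by contradiction. Put $\bar m=\min\{m\mid[I_{m\mu\X}]_{md-1}\ne\{0\}\}$. One rules out $\bar m=1$ by a direct residual computation (after removing from a hypothetical curve of degree $d-1$ the fixed part $\Fa$ diminished by one line one is left with a curve of degree $1$ through all the reduced points of $\X$ on $\La_1\cup\La_2$, which is absurd). For $\bar m>1$ one peels off the components of $\Fa$ one at a time: repeated applications of Lemma~\ref{L:20210429-201} to $\La_1$ and to $\La_2$ show that each is a fixed component of $[I_{\bar m\mu\X}]_{\bar m d-1}$ of at least the multiplicity it has in $\Fa$; the lines $\Ma_i,\Na_i,\Ta_b$ are forced next in the same way; and finally, because $\Ca$ is irreducible and the residual fat-point scheme meets $\Ca$ in total degree strictly larger than $(b-1)$ times the current degree, Bezout forces $\Ca$ — and, iterating, all $e$ copies of $\Ca$ — to split off as well. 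Since $\Fa$ has degree $d$ and meets every point of $\X$ in multiplicity exactly $\mu$, removing $\Fa$ yields $\dim[I_{\bar m\mu\X}]_{\bar m d-1}=\dim[I_{(\bar m-1)\mu\X}]_{(\bar m-1)d-1}$, which is $0$ by minimality of $\bar m$: contradiction. Step~3 is then immediate from Lemma~\ref{L:20210722-1}, giving $\widehat\alpha(I_\X)=d/\mu$.

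The main obstacle is Step~2, and within it the treatment of $\Ca$. The ceiling estimates for the line components via Lemma~\ref{L:20210429-201} are routine but, exactly as in the proof of Theorem~\ref{MMcpari}, numerous and numerically tight; in particular one must check that each ceiling comes out to exactly the multiplicity that the corresponding line has in $\Fa$, so that after stripping $\Fa$ the residual scheme is precisely $(\bar m-1)\mu\X$. The genuinely new difficulty is that one must show the residual linear system forces each of its members to meet $\Ca$ beyond the Bezout bound, hence to contain $\Ca$, and then repeat this $e$ times while keeping track of how the multiplicities at $R$ and at the points lying on $\Ca$ decrease; this is precisely where the irreducibility part of Lemma~\ref{T:20210527-109} is indispensable. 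Once these two points are dispatched, the remainder is the template of Section~\ref{M20210723-1}.
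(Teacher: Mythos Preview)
Your plan follows the paper's three-step template and you correctly identify Lemma~\ref{T:20210527-109} as the new ingredient. However, the specific curve $\Fa$ you propose does not exist. With a single irreducible $\Ca$ (through $(b-2)R$, the even-indexed $P$'s, and $Q_1,\dots,Q_{b-1}$) together with multiples of $\La_1,\La_2,\Ma_i,\Na_i,\Ta_b$, write out your incidence equations: the conditions at $P_{2i-1}$ force all $p_i\equiv p$, then $q_i\equiv q=p-e$ and $r=p+e$. The multiplicity at $R$ becomes $(b-1)p+(b-1)q+r+(b-2)e=(2b-1)p$, so $(2b-1)p=k\mu$; but the degree condition collapses to $2k\mu+(2b-3)p=kd$, i.e.\ $(2b-3)p=k(\mu-(2b-3))$. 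These two are incompatible for every $k$, since they would force $2\mu=(2b-1)(2b-3)=4b^2-8b+3$ whereas $2\mu=4b^2-8b+2$. So your ``coefficients are essentially forced'' is right, but what they are forced to is an inconsistent system; no scaling repairs this.

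The paper fixes this by using $b-1$ \emph{distinct} irreducible curves of degree $b-1$, not a single one with multiplicity $e$: for $1\le i\le b-1$, $\Ca_i$ passes through $P_2,P_4,\dots,P_{2b-2}$, through $Q_1,\dots,\widehat{Q_i},\dots,Q_b$ (so each $\Ca_i$ \emph{contains} $Q_b$ and omits $Q_i$, the opposite of your choice), and through $(b-2)R$. The lines $\Na_j$ are not used at all. One takes
\[
\Fa=(2b^2-5b+2)\La_1+(2b^2-6b+4)\La_2+(b-1)\!\sum_{i=1}^{b-1}\Ma_i+(b-2)\Ta_b+\sum_{i=1}^{b-1}\Ca_i,
\]
which has degree $d$ and multiplicity exactly $\mu$ at every point of $\X$, with no scaling needed. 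An added bonus is that each $\Ca_i$ occurs with multiplicity~$1$, so in Step~2 one B\'ezout computation per curve suffices and your ``iterating, all $e$ copies of $\Ca$'' issue disappears. The rest of your outline---peel $\La_1$, then $\La_2$, then $\La_1$ once more, then the $\Ma_i$, then $\Ta_b$, then B\'ezout for each $\Ca_i$; rule out $\bar m=1$ using the ceiling parts; deduce $\dim[I_{\bar m\mu\X}]_{\bar m d-1}=\dim[I_{(\bar m-1)\mu\X}]_{(\bar m-1)d-1}$---is exactly what the paper does once the correct $\Fa$ is in hand.
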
 

\begin{proof}
Note that from the  definition of  a standard $\k$-configuration, we have $b>2$.
Let
$$
\begin{array}{llllllllllllll}
\Ca_i & \text{be the irreducible curve of degree $(b-1)$ through $P_2,P_4,\dots, P_{2b-2},Q_1,\dots,\widehat Q_i,\dots, Q_b, (b-2)R$} \\
& \text{for $1\le i\le b-1$ (see Lemma~\ref{T:20210527-109}),} 
\end{array} 
$$
and let
$$
\Fa=(2b^2-5b+2)\La_1+(2b^2-6b+4)\La_2+(b-1)\Ma_1+\cdots+(b-1)\Ma_{b-1}+(b-2)\Ta_b+\Ca_1+\cdots+\Ca_{b-1}.
$$
So $\Fa$ is a curve of degree $6b^2-14b+6$ with  multiplicity $2b^2-4b+1$ at each point of $\X$. Hence for $m>0$
$$
m\Fa\in [I_{(2b^2-4b+1)m\X}]_{(6b^2-14b+6)m}.
$$
We  prove that for $m >0$,
$$
 [I_{(2b^2-4b+1)m\X}]_{(6b^2-14b+6)m-1}= \{0\}.
$$
\medskip
Assume that for some $m$, 
$
 [I_{(2b^2-4b+1)m\X}]_{(6b^2-14b+6)m-1}\neq \{0\}.
$
Thus by Lemma  \ref {L:20210429-201},  $\La_1$ is a fixed component of multiplicity at least 
\begin{equation}\label{M2b-2-1} 
\left\lceil \frac{(2b^2-4b+1)(2b-2)m-(6b^2-14b+6)m+1}{2b-3}\right\rceil \geq (2b^2-6b+3)m
\end{equation}
for the plane curves of the linear system
$ [I_{(2b^2-4b+1)m\X}]_{(6b^2-14b+6)m-1}$.
We remove  $ (2b^2-6b+3)m \La_1$, and we get that $\La_2$ is a fixed component of multiplicity at least 
\begin{equation}\label{M2b-2-2} 
\left\lceil \frac{(2b^2-4b+1)bm-(6b^2-14b+6-2b^2+6b-3)m+1}{b-1}\right\rceil \geq (2b^2-6b+3)m.
\end{equation}
Remove $(2b^2-6b+3)m\La_2$. Recalling that  now we are in degree
$(6b^2-14b+6)m-2(2b^2-6b+3)m-1=(2b^2-2b)m-1$, and  the points on $\La_1$ have multiplicity $(2b-2)m$,
we get that $\La_1$ is a fixed component of multiplicity at least 
\begin{equation}\label{M2b-2-3} 
\left\lceil \frac{(2b-2)(2b-2)m-(2b^2-2b)m+ 1}{2b-3}\right\rceil = (b-2)m+
\left\lceil \frac{(b-2)m+1}{2b-3}\right\rceil.
\end{equation}
Hence $\La_1$ is a fixed component of multiplicity at least $(2b^2-6b+3)m+ (b-2)m=(2b^2-5b+1)m$.

By removing $(b-2)m \La_1$  we get
$$\dim [I_{(2b^2-4b+1)m\X}]_{(6b^2-14b+6)m-1}=
\dim [I_{(2b^2-4b+1)m R+ \sum_{P_i \in \La_1} bmP_i + \sum_{Q_i \in \La_2} (2b-2)mQ_i }]_{(2b^2-3b+2)m-1}.
$$
If the above dimension is different from zero, then each $\Ma_i$ is a fixed component of multiplicity at least 
\begin{equation}\label{M2b-2-4} 
\left\lceil \frac{(2b^2-4b+1+b+2b-2)m-(2b^2-3b+2)m+ 1}{2}\right\rceil =
 (b-2)m + \left\lceil \frac{m+1}{2}\right\rceil.
\end{equation}
By removing the $b-1$ multiple lines $(b-2)m\Ma_i$, the residual scheme is
$$\Y=
(b^2-b-1)m R
+ \sum_{P_i \in \La_1, \text{ with } i  \text{ odd }\ } 2mP_i 
+\sum_{P_i \in \La_1,  \text{ with } i    \text{ even }} bmP_i 
+ \sum_{i=1} ^{b-1}bmQ_i +(2b-2)mQ_b,
$$
and we are left in degree $(2b^2-3b+2)m-1-(b-1)(b-2)m=b^2m-1.$

Hence
$$\dim [I_{(2b^2-4b+1)m\X}]_{(6b^2-14b+6)m-1}=
\dim [I_{\Y}]_{b^2m-1}.
$$
If this dimension is still different from zero, then  $\Ta_b$ is a fixed component of multiplicity at least 
\begin{equation}\label{M2b-2-5} 
(b^2-b-1+2b-2)m-b^2m+1 = (b-3)m+1.
\end{equation}

By removing $(b-3)m\Ta_b$ we get
$$\dim [I_{(2b^2-4b+1)m\X}]_{(6b^2-14b+6)m-1}=
\dim [I_{\Y-(b-3)m\Ta_b}]_{b^2m-1-(b-3)m}
=\dim [I_{\Y'}]_{(b^2-b+3)m-1},
$$
where
$$\Y'=
(b^2-2b+2)m R
+ \sum_{P_i \in \La_1, \text{ with } i  \text{ odd }\ } 2mP_i 
+\sum_{P_i \in \La_1,  \text{ with } i    \text{ even }} bmP_i 
+ \sum_{i=1} ^{b-1}bmQ_i +(b+1)mQ_b.
$$
If $\Ha$ is a curve of the linear system $[I_{\Y'}]_{(b^2-b+3)m-1}$,
the multiplicity of intersection between each $\Ca_i$ and $\Ha$ is at least 
$$|\Ca_i \cdot \Ha | \geq 
(b-2)(b^2-2b+2)m  
+(b-1) bm
+ (b-2)bm+(b+1)m=(b^3-2b^2+4b-3)m,
$$
and this number is bigger than the product of the degree of $\Ca_i$ and $\Ha$, which is
$(b-1)((b^2-b+3)m-1)=(b^3-2b^2+4b-3)m -b+1. $
Hence, by B\'ezout's Theorem, each curve $\Ca_i$ is a fixed component for the curves of  $[I_{\Y-(b-3)m\Ta_b}]_{b^2m-1-(b-3)m}.$

Now let 
\begin {equation} \label {barm-per-2b-2} 
 \bar m =\min \{ m |\ [I_{(2b^2-4b+1)m\X}]_{(6b^2-14b+6)m-1}\neq\{0\} \}.
\end {equation}
We have $\bar m>1$, in fact for $m=1$, by \eqref{M2b-2-1}, 
\eqref{M2b-2-2}, \eqref{M2b-2-3}, \eqref{M2b-2-4}, \eqref{M2b-2-5}, and
 using also the ceiling parts, we get that $\Fa$ is a curve of the linear system should be a fixed component for the linear system
$  [I_{(2b^2-4b+1)\X}]_{6b^2-14b+5}$, but $\deg \Fa = 6b^2-14b+6$,
a contradiction.

Hence $\bar m>1$. 

By  the computation above
$\Fa$ is a fixed component for the linear system
$ [I_{(2b^2-4b+1)m\X}]_{(6b^2-14b+6)m-1},$
hence we have
$$
\begin{array}{llllll}
\dim   [I_{(2b^2-4b+1)\bar m\X}]_{(6b^2-14b+6)\bar m-1}&=&
 \dim  [I_{(2b^2-4b+1)\bar m\X-\Fa}]_{(6b^2-14b+6)\bar m-1-(6b^2-14b+6)}\\
 &=&
 \dim  [I_{(2b^2-4b+1)(\bar m-1)\X}]_{(6b^2-14b+6)(\bar m-1)-1} ,
 \end{array}
$$
which is zero by (\ref {barm-per-2b-2} ), a contradiction.
\end{proof}

\begin{thm}\label{T:20210717}
Let $\X$ be a standard $\k$-configuration  of type $(1,b,2b-1)$. 
Then 
$$
\widehat \alpha(I_\X)=\frac{6b^2-8b+1}{2b^2-2b}.
$$
\end{thm}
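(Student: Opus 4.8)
The plan is to run the three-step method of Section~\ref{M20210723-1} with $\mu=2b^2-2b$ and $d=6b^2-8b+1$, so that the claimed value is $d/\mu$. Keep the notation of Figure~\ref{figuragenerale1} for the standard $\k$-configuration $\X$ of type $(1,b,2b-1)$: $\La_1$ carries $P_1,\dots,P_{2b-1}$, $\La_2$ carries $Q_1,\dots,Q_b$, $R\notin\La_1\cup\La_2$, and $\Ma_j$ is the line through $P_{2j-1},Q_j,R$ for $1\le j\le b$ (these three points are indeed collinear in the standard model). For $1\le i\le b$, let $\Ca_i$ be the curve of degree $b-1$ through the $b-1$ even-indexed points $P_2,P_4,\dots,P_{2b-2}$, the $b-1$ points $Q_1,\dots,\widehat Q_i,\dots,Q_b$, and $(b-2)R$; by Lemma~\ref{T:20210527-109} applied with its parameter equal to $b-1$, such a $\Ca_i$ exists and is irreducible, and, comparing degrees with Bezout's theorem, $\Ca_i$ meets $\La_1$ only in the even $P_k$ and $\La_2$ only in the $Q_k$ with $k\ne i$, hence contains no other point of $\X$. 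Then set
\[
\Fa=(2b^2-3b)\,\La_1+(2b^2-4b+1)\,\La_2+b\,\Ma_1+\cdots+b\,\Ma_b+\Ca_1+\cdots+\Ca_b.
\]
One checks $\deg\Fa=6b^2-8b+1=d$ and that $\Fa$ has multiplicity exactly $\mu=2b^2-2b$ at every point of $\X$ (at $P_{2j-1}$: $\La_1+\Ma_j$; at an even $P_k$: $\La_1+\Ca_1+\cdots+\Ca_b$; at $Q_k$: $\La_2+\Ma_k+\sum_{i\ne k}\Ca_i$; at $R$: $\Ma_1+\cdots+\Ma_b+\Ca_1+\cdots+\Ca_b$). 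Hence $m\Fa\in[I_{\mu m\X}]_{dm}$ for all $m>0$, and so $\widehat\alpha(I_\X)\le d/\mu$.

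For the opposite inequality I would show $[I_{\mu m\X}]_{dm-1}=\{0\}$ for every $m\ge1$, following the proof of Theorem~\ref{M2b-2} closely. Suppose some such system is nonzero. Applying Lemma~\ref{L:20210429-201} in the order $\La_1$, $\La_2$, $\La_1$ (the two lines carrying many points of $\X$), each is forced to be a fixed component with multiplicity given by the ceiling formula of that lemma; after removing these multiple lines the question becomes one about a residual linear system supported at $R$ together with suitable multiples of the $P_i$ on $\La_1$ and the $Q_i$ on $\La_2$. On that residual scheme, Lemma~\ref{L:20210429-201} next forces each of the $b$ lines $\Ma_1,\dots,\Ma_b$ to be a fixed component; and finally, since $\deg\Ca_i=b-1>1$, Bezout's theorem --- comparing the forced intersection multiplicity of $\Ca_i$ with any curve of the residual system against the product of their degrees, exactly as at the end of the proof of Theorem~\ref{M2b-2} --- forces each irreducible $\Ca_i$ to be a fixed component as well. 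The multiplicities thus successively peeled off add up to $\Fa$ (and a little more when $m=1$, because of the ceilings), so $\Fa$ is a fixed component of $[I_{\mu m\X}]_{dm-1}$.

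To finish, set $\bar m=\min\{m\mid[I_{\mu m\X}]_{dm-1}\ne\{0\}\}$. For $m=1$ the extra ceiling slack forces a fixed component of degree at least $d$, which cannot divide a nonzero form of degree $d-1$; hence $\bar m\ne1$. For $\bar m>1$, removing $\Fa$ (of degree $d$ and of multiplicity $\mu$ at each point of $\X$) gives
\[
\dim[I_{\mu\bar m\X}]_{d\bar m-1}=\dim[I_{\mu\bar m\X-\Fa}]_{d\bar m-1-d}=\dim[I_{\mu(\bar m-1)\X}]_{d(\bar m-1)-1}=0
\]
by minimality of $\bar m$, a contradiction. Thus the initial degree of $I_{\mu m\X}$ equals $dm$ for all $m$, and Lemma~\ref{L:20210722-1} gives $\widehat\alpha(I_\X)=d/\mu=\frac{6b^2-8b+1}{2b^2-2b}$. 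The main obstacle is the middle step: one must choose the order of peeling and track the ceilings so that the multiplicities forced off $\La_1,\La_2,\Ma_1,\dots,\Ma_b$, together with the Bezout bounds for the non-linear curves $\Ca_1,\dots,\Ca_b$, reassemble into precisely $\Fa$, the ``$-1$'' in the degree being exactly absorbed by the ceiling terms; the odd number $2b-1$ of points on $\La_1$ makes this bookkeeping parallel to, yet genuinely different from, the $(1,b,2b-2)$ case, and the smallest value $b=2$, where the $\Ca_i$ degenerate to lines, may require a separate word.
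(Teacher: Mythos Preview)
Your proposal is correct and follows essentially the same route as the paper's own proof: the same curve $\Fa=(2b^2-3b)\La_1+(2b^2-4b+1)\La_2+b\sum_j\Ma_j+\sum_i\Ca_i$ with the same irreducible $(b-1)$-ics $\Ca_i$, the same peeling order $\La_1,\La_2,\La_1,\Ma_j$ via Lemma~\ref{L:20210429-201} followed by B\'ezout for the $\Ca_i$, and the same $\bar m$-minimality contradiction. Your caveat about $b=2$ is harmless: Lemma~\ref{T:20210527-109} with parameter $1$ is trivial and the B\'ezout count still goes through, so no separate argument is needed.
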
 

\begin{proof} See [Appendix \ref{appendice}, Proof of Theorem  \ref{T:20210717}].
\end{proof}

\begin{thm} \label{T:20210428-203}
Let $\X$ be a  standard $\k$-configuration in  of type $(1,b,2b)$. Then 
$$
\widehat\alpha (I_\X)=\frac{6b-5}{2b-1}.
$$
\end{thm}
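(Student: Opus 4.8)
The plan is to follow the three-step method of Section~\ref{M20210723-1}, exactly as in the proof of Theorem~\ref{MMcpari}, but now tailored to the type $(1,b,2b)$. First I would write down the curve $\Fa$ of degree $d=6b-5$ that passes through each point of $\X$ with multiplicity exactly $\mu=2b-1$. Since $c=2b$ is even, the line structure of Figure~\ref{figuragenerale1} gives the lines $\Ma_1,\dots,\Ma_b$ (each through three points of $\X$), $\Na_1,\dots,\Na_b$ (each through two points), and no $\Ta_i$'s; a natural guess is
$$
\Fa=(2b-3)\La_1+(2b-3)\La_2+\Ma_1+\cdots+\Ma_b+\Na_1+\cdots+\Na_b,
$$
whose degree is $2(2b-3)+b+b=6b-6$ — one short — so one must instead balance the multiplicities on $R$, $P_i$, $Q_i$ correctly. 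The $\Ma_i$ already put multiplicity $b$ on $R$, and the $\Na_i$ add $b$ more, so $R$ gets multiplicity $2b$; on each $P_{2i-1}$ we get $2b-3+1=2b-2$, on each $P_{2i}$ we get $2b-3+1=2b-2$, on each $Q_i$ we get $2b-3+1=2b-2$. To raise everything to $\mu=2b-1$ one adds one more line through the deficient points; the cleanest fix is to take $\Fa=(2b-2)\La_1+(2b-2)\La_2+\Ma_1+\cdots+\Ma_b$, giving degree $2(2b-2)+b=5b-4$, still not matching, so the exact bookkeeping of which auxiliary lines to include is the first thing to nail down — I would reverse-engineer it from the requirement $\deg\Fa=6b-5$ and $\mathrm{mult}_P(\Fa)=2b-1$ for all $P\in\X$, which pins down $\Fa$ uniquely up to the choice of the three-point lines $\Ma_i$ versus the two-point lines $\Na_i$.

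Having fixed $\Fa$, Step~1 gives $m\Fa\in\big[I_{(2b-1)m\X}\big]_{(6b-5)m}$ for all $m>0$, so $\widehat\alpha(I_\X)\le\frac{6b-5}{2b-1}$. For Step~2 I would assume $\big[I_{(2b-1)m\X}\big]_{(6b-5)m-1}\ne\{0\}$ for some $m$ and apply Lemma~\ref{L:20210429-201} repeatedly: first peel off a high power of $\La_1$ (using $c=2b>b$, so $\La_1$ is the most-loaded line), then of $\La_2$, then alternate, each time recomputing the ceiling $\big\lceil\frac{m_1+\cdots+m_s-d}{s-1}\big\rceil$ and checking it is $\ge$ the claimed multiplicity. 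After the two long lines are stripped down, the residual scheme is supported at $R$ together with the $P_i$ on $\La_1$ and $Q_i$ on $\La_2$ with reduced multiplicities, and one then forces each $\Ma_i$ and each $\Na_i$ to be a fixed component by the same lemma (the key inequalities being of the shape $\big\lceil\frac{(\text{small})m+1}{1 \text{ or } 2}\big\rceil\ge 1$). Collecting all fixed components shows that a curve of degree $6b-6$ of the right shape is a fixed component of $\big[I_{(2b-1)m\X}\big]_{(6b-5)m-1}$.

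Then, as in Theorem~\ref{MMcpari}, I set $\bar m=\min\{m\mid [I_{(2b-1)m\X}]_{(6b-5)m-1}\ne\{0\}\}$. First I dispose of $\bar m=1$ directly: for $m=1$ the fixed component would be a curve of degree $6b-6$, and after removing it one is left with $\dim[I_{P_1+\cdots+P_{2b}+Q_1+\cdots+Q_b}]_1$, which is $0$ since more than $2$ of these points are not collinear, a contradiction. For $\bar m>1$ the multiplicity of $\La_1$ and $\La_2$ in the fixed component is at least the multiplicity they carry in $\Fa$ (this is where one uses $\bar m\ge 2$ to upgrade, say, $(2b-2)\bar m\ge 2b-2$ and similar), so $\Fa$ itself is a fixed component; removing $\Fa$ and using that $\Fa$ meets $\X$ with multiplicity exactly $2b-1$ everywhere yields
$$
\dim[I_{(2b-1)\bar m\X}]_{(6b-5)\bar m-1}=\dim[I_{(2b-1)(\bar m-1)\X}]_{(6b-5)(\bar m-1)-1}=0
$$
by minimality of $\bar m$, a contradiction. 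Finally Step~3 invokes Lemma~\ref{L:20210722-1} with $d=6b-5$, $\mu=2b-1$ to conclude $\widehat\alpha(I_\X)=\frac{6b-5}{2b-1}$. The main obstacle I anticipate is purely the combinatorial bookkeeping in Step~2: getting the ceiling-function estimates to come out with the exact claimed multiplicities (and in particular verifying that the "leftover" $\big\lceil\frac{(b-2)m+1}{2b-3}\big\rceil$-type terms push $\La_1,\La_2$ up to precisely the multiplicity they have in $\Fa$, not more and not less), since unlike the $c\le 2b-4$ case there is no subconfiguration shortcut and the inequality $c=2b$ sits right at a boundary of the earlier ranges. I do not expect Lemma~\ref{T:20210527-109} to be needed here, since $\Fa$ should be a union of lines; the irreducible-curve tool only enters for types $(1,b,2b-2)$ through $(1,b,2b+1)$ where three-point lines run out.
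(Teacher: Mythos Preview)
There is a genuine gap in your proposal, and you have in fact flagged it yourself without noticing. You write that Lemma~\ref{T:20210527-109} ``only enters for types $(1,b,2b-2)$ through $(1,b,2b+1)$'' and then claim it is not needed here --- but the present theorem is precisely the type $(1,b,2b)$, right in the middle of that range. Your own attempts to assemble a line-only $\Fa$ visibly fail (degrees $6b-6$ and $5b-4$ instead of $6b-5$), and there is a structural reason: using only $\La_1,\La_2,\Ma_i,\Na_i$, requiring multiplicity $2b-1$ at every $P_{2i-1}$, $P_{2i}$, $Q_i$ forces all the $\Ma_i$- and $\Na_i$-coefficients to coincide, which makes the multiplicity at $R$ a multiple of $2b$ --- never the odd number $2b-1$. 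No amount of ``reverse-engineering'' with these lines will fix that.

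The paper's curve is
\[
\Fa=(2b-2)\La_1+(2b-3)\La_2+\Ma_1+\cdots+\Ma_b+\Ca,
\]
where $\Ca$ is the unique irreducible curve of degree $b$ through $P_2,P_4,\dots,P_{2b},Q_1,\dots,Q_b$ with a $(b-1)$-fold point at $R$, provided by Lemma~\ref{T:20210527-109}; one checks $\deg\Fa=6b-5$ and that $\Fa$ has multiplicity exactly $2b-1$ at every point of $\X$. In Step~2 the irreducibility of $\Ca$ is used essentially: it is not peeled off via Lemma~\ref{L:20210429-201} but via B\'ezout directly, since for any $\Ha\in[I_{(2b-1)m\X}]_{(6b-5)m-1}$ one has $|\Ca\cdot\Ha|\ge 2b(2b-1)m+(b-1)(2b-1)m>b\cdot\deg\Ha$, forcing $\Ca$ to be a component of $\Ha$. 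The remainder of the argument --- peeling off $\La_1$, $\La_2$, the $\Ma_i$, disposing of $\bar m=1$, and the minimality contradiction for $\bar m>1$ --- follows the template you describe.
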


\begin{proof} See [Appendix \ref{appendice},  Proof of Theorem  \ref{T:20210428-203}].
\end{proof} 
\medskip

\begin{thm} \label{T:20210618-203}
Let $\X$ be a {standard} $\k$-configuration in  of type $(1,b,2b+1)$. Then $$
\widehat\alpha (I_\X)=\frac{6b^2-2b-3}{2b^2-1}.
$$
\end{thm}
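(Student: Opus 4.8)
The plan is to follow the three-step method of Section~\ref{M20210723-1}, in close parallel with the proof of Theorem~\ref{M2b-2}, but now for type $(1,b,2b+1)$. First I would construct an explicit plane curve $\Fa$ that passes through every point of $\X$ with one fixed multiplicity $\mu$ and has degree $d$ with $d/\mu=(6b^2-2b-3)/(2b^2-1)$; the natural guess, mirroring Theorem~\ref{M2b-2}, is to take $\mu = 2b^2-1$, $d=6b^2-2b-3$, and
$$
\Fa = A\,\La_1 + B\,\La_2 + (b-1)(\Ma_1+\cdots+\Ma_{b+1}) + \Ca_1+\cdots+\Ca_{b-1},
$$
for suitable integers $A,B$, where the $\Ca_i$ are the irreducible curves of degree $b$ through $P_2,P_4,\dots,P_{2b},Q_1,\dots,\widehat Q_i,\dots,Q_b,(b-1)R$ supplied by Lemma~\ref{T:20210527-109}. (Here $c=2b+1$ is odd, so the configuration of $\Ma_i$-lines covers all odd-indexed $P$'s: $P_1,P_3,\dots,P_{2b+1}$, and there are no $\Na_i$ or $\Ta_i$ lines. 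The coefficients $A,B$ are pinned down by requiring the $P_i$ and $Q_i$ to receive multiplicity exactly $\mu$ and by matching $\deg\Fa = d$; this is a short linear bookkeeping.) Once $\Fa$ is in hand, $m\Fa\in[I_{\mu m\X}]_{dm}$ for all $m>0$, which gives $\widehat\alpha(I_\X)\le d/\mu$.

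Second, and this is where the bulk of the work lies, I would show $[I_{\mu m\X}]_{dm-1}=\{0\}$ for every $m>0$. The argument is the iterated-Bézout peeling of Theorem~\ref{M2b-2}: assuming the contrary, apply Lemma~\ref{L:20210429-201} repeatedly to force $\La_1$, then $\La_2$, then $\La_1$ again, etc., to be fixed components of explicitly bounded multiplicities, remove them, and keep track of the residual fat-point scheme and the dropping degree. After the two main lines have been stripped down to the right residual multiplicities on the $P_i$ and $Q_i$, apply Lemma~\ref{L:20210429-201} to each $\Ma_i$ to pull them out with multiplicity $\approx (b-1)m$, and finally use Bézout against each irreducible curve $\Ca_i$ (comparing $|\Ca_i\cdot\Ha|$ with $\deg\Ca_i\cdot\deg\Ha$) to conclude each $\Ca_i$ is also a fixed component. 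This shows $\Fa$ itself (with the multiple lines taken to multiplicity $\mu m$ appropriately) is a fixed component of $[I_{\mu m\X}]_{dm-1}$.

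Third, set $\bar m=\min\{m\mid[I_{\mu m\X}]_{dm-1}\ne\{0\}\}$. One checks directly that $\bar m\ne 1$: for $m=1$ the peeling would force a curve of degree $dm-1-(d-1)\cdot(\text{stuff})$ through a leftover set of simple points that manifestly carries no curve of that degree (as in Theorem~\ref{M2b-2}, the residue is a handful of collinear simple points in too low a degree), a contradiction. For $\bar m>1$, since the surviving multiplicities satisfy the needed inequalities, $\Fa$ is a fixed component of $[I_{\mu\bar m\X}]_{d\bar m-1}$; removing it identifies this linear system with $[I_{\mu(\bar m-1)\X}]_{d(\bar m-1)-1}$, which is $\{0\}$ by minimality of $\bar m$ — contradiction. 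Hence the initial degree of $I_{\mu m\X}$ is exactly $dm$ for all $m$, and Lemma~\ref{L:20210722-1} yields $\widehat\alpha(I_\X)=d/\mu=(6b^2-2b-3)/(2b^2-1)$.

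The main obstacle will be Step~2: getting the coefficients $A,B$ and the multiplicity bounds in the successive applications of Lemma~\ref{L:20210429-201} to come out as clean integers (or with controllable ceiling terms, as in \eqref{M2b-2-3} and \eqref{M2b-2-4}) so that after all removals the residual scheme is exactly the one against which the Bézout comparison with $\Ca_i$ works — i.e. so that $|\Ca_i\cdot\Ha| > \deg\Ca_i\cdot\deg\Ha$ holds with the right strict inequality. The parity $c=2b+1$ and the degree-$b$ (rather than degree-$(b-1)$) auxiliary curves shift every constant relative to Theorem~\ref{M2b-2}, so the arithmetic must be redone carefully; since this is long and essentially identical in structure to the earlier proofs, the detailed computation is best deferred to the Appendix.
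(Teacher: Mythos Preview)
Your overall strategy is the same as the paper's and is the right one, but the specific curve $\Fa$ you propose cannot work, and the failure is not a matter of bookkeeping that determining $A,B$ would fix.

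There are three concrete problems with your $\Fa$. First, for $c=2b+1$ there is no line $\Ma_{b+1}$: by the paper's notation $\Ma_i$ passes through $P_{2i-1}$, $Q_i$, $R$, and there is no $Q_{b+1}$. The line through $R$ and $P_{2b+1}$ contains only two points of $\X$, so it cannot play the role of an $\Ma$-line. Second, and more seriously, the multiplicity of your $\Fa$ at $R$ is determined solely by the $\Ma$- and $\Ca$-components (since $R\notin\La_1\cup\La_2$), and your choice gives
\[
(b-1)(b+1)+(b-1)(b-1)=2b^2-2b\neq 2b^2-1=\mu,
\]
so no choice of $A,B$ can repair the multiplicity at $R$. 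Third, your curves $\Ca_i$ pass through $b$ points on $\La_1$ but only $b-1$ points on $\La_2$, so Lemma~\ref{T:20210527-109} does not apply; in fact such curves form a pencil and need not be irreducible, which would kill the B\'ezout step.

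The paper's construction is different in exactly these places: it uses the $b$ lines $\Ma_1,\dots,\Ma_b$ each with multiplicity $b$, and $b+1$ irreducible degree-$b$ curves $\Ca_1,\dots,\Ca_{b+1}$, where $\Ca_{b+1}$ passes through $P_2,P_4,\dots,P_{2b},Q_1,\dots,Q_b,(b-1)R$ and, for $1\le i\le b$, $\Ca_i$ passes through $P_2,\dots,\widehat{P}_{2i},\dots,P_{2b},P_{2b+1},Q_1,\dots,Q_b,(b-1)R$ (omit an even-indexed $P$, not a $Q$). Each of these has exactly $b$ simple points on each of $\La_1,\La_2$, so Lemma~\ref{T:20210527-109} applies and the B\'ezout peeling goes through. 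With these components one finds $A=2b^2-b-1$, $B=2b^2-2b-2$, and the rest of your outline (the iterated application of Lemma~\ref{L:20210429-201}, the B\'ezout comparison with $|\Ca_i\cdot\Ha|$, and the minimal-$\bar m$ contradiction) is exactly what the paper does.
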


\begin{proof} See [Appendix \ref{appendice}, Proof of Theorem  \ref{T:20210618-203}].
\end{proof} 

 Now we will prove that the Waldschmidt constant of  a standard $\k$-configuration of type $(1,b,c)$ only depends on $b$ when $c \geq 2b+2$.  In order to do that, we need the following lemma.

\begin{lem} \label{L:20210616-418} 
Let $L_1$, $L_2$ be two distinct  lines, and let $b$, $c$ be  positive integers,  with $c\geq b+2$. Let  $P_1,\dots,P_{c} \in L_1$,  $Q_1,\dots,Q_b\in L_2$, and $R$, be distinct points such that 
 $R \not\in L_1 \cup L_2$, 
and the point  $L _1\cap L_2 \not\in \{P_1,\dots,P_c , Q_1,\dots,Q_b\}$.
Moreover $R$, $P_i$, $Q_j$ do not lie on a line, for any $i$ and $j$. Let $\Y_c$  be the scheme
(see Figure \ref{FIG:20210704-005})

$$\Y_c = P_1+\cdots+P_{c} +Q_1+\cdots +Q_b +R.$$
Then
$$
\widehat\alpha(\Y_c)=\frac{3b-1}{b}.
$$
\end{lem}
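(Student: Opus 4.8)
The plan is to apply the general method of Section~\ref{M20210723-1} to the fixed scheme $\Y_c$. First I would observe that the relevant invariant should be $\widehat\alpha(\Y_c)=\tfrac{3b-1}{b}$, so the natural candidates in Lemma~\ref{L:20210722-1} are $\mu=b$ and $d=3b-1$: I must produce a curve $\Fa$ of degree $3b-1$ vanishing to order exactly $b$ at every point of the support of $\Y_c$, and then show that $[I_{m b\,\Y_c}]_{m(3b-1)-1}=\{0\}$ for all $m\ge 1$. Here one subtlety is that $\Y_c$ is a scheme of \emph{simple} points (all multiplicities are $1$), so $mb\,\Y_c$ is the fat-point scheme with multiplicity $mb$ at each of $P_1,\dots,P_c,Q_1,\dots,Q_b,R$; this is exactly the setting Lemma~\ref{L:20210722-1} handles.

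For \emph{Step~1} I would build $\Fa$ out of the lines already named in the section's notation together with the irreducible curves $\Ca_i$ of Lemma~\ref{T:20210527-109}. Concretely: take $(b-1)\La_1$, some multiple of $\La_2$, the lines $\Ma_i$, the lines $\Na_i$ (for the even-indexed points $P_{2i}$ on $L_1$ that are not covered by the $\Ma_i$), and the $b-1$ irreducible curves $\Ca_i$ of degree $b-1$ through $P_2,P_4,\dots,P_{2b-2},Q_1,\dots,\widehat{Q_i},\dots,Q_b,(b-2)R$. One counts multiplicities at $R$, at the $Q_j$, at the odd-indexed $P_j$, and at the even-indexed $P_j$ separately and tunes the coefficient of $\La_2$ (and, if needed, $\La_1$) so that all multiplicities equal $b$ and the total degree is $3b-1$. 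Because $c\ge b+2$, the line $\La_1$ is heavily overdetermined, which is what makes it a forced component in Step~2; the hypothesis $c\ge b+2$ (rather than $c\ge 2b+2$) is exactly what is needed here, and explains why this lemma, applied with a suitable $\Y_c\subseteq\X$ together with a containment in the other direction, yields Theorem~\ref{T:20210617-419}.

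For \emph{Step~2} I would set $\bar m=\min\{m\mid [I_{mb\,\Y_c}]_{m(3b-1)-1}\neq\{0\}\}$, rule out $\bar m=1$ directly (a curve of degree $3b-2$ through $\Y_c$ would have to be the degenerate curve $\Fa$ minus one unit of degree, contradicting $\deg\Fa=3b-1$), and for $\bar m>1$ peel off fixed components one at a time. The order would be: first use Lemma~\ref{L:20210429-201} to force $\La_1$ (several times, exploiting $c$ large), then $\La_2$, then the $\Ma_i$ and $\Na_i$; then for the curves $\Ca_i$ I would switch to B\'ezout's Theorem, checking that the forced intersection multiplicity $|\Ca_i\cdot\Ha|$ (computed from the multiplicities of $\Ha$ at the $b-1$ relevant $P$'s, the $b-1$ relevant $Q$'s, and at $R$ with the inflated multiplicity $b-2$) strictly exceeds $(b-1)\cdot(\text{degree of }\Ha)$, so $\Ca_i$ splits off. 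After removing all of $\Fa$, the residual scheme in degree $\bar m(3b-1)-1-(3b-1)$ is $(\bar m-1)b\,\Y_c$ in degree $(\bar m-1)(3b-1)-1$, contradicting minimality of $\bar m$.

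The main obstacle is \emph{Step~2}, and within it the bookkeeping of the B\'ezout estimates for the $\Ca_i$: one has to be careful that after the lines $\La_1,\La_2,\Ma_i,\Na_i$ have been stripped the residual multiplicities at the odd $P_j$, the even $P_j$, the $Q_j$, and $R$ are precisely what makes $|\Ca_i\cdot\Ha|>(b-1)\deg\Ha$ hold with room to spare for every $i$ simultaneously, and that stripping $\Ca_1,\dots,\Ca_{i-1}$ first does not destroy this for $\Ca_i$. This is the same kind of delicate ceiling-function arithmetic as in the proof of Theorem~\ref{M2b-2}, so I expect the argument to parallel that proof closely; the case $c\ge b+2$ (as opposed to an exact value of $c$) only makes the $\La_1$ estimates easier, not harder, so no genuinely new difficulty arises there.
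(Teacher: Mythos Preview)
Your proposal has a genuine gap at the very first step. The lines $\Ma_i$ from Section~\ref{S1bc} are, by definition, the lines through $P_{2i-1}$, $Q_i$, and $R$; they exist for the \emph{standard} configuration because there the three points happen to be collinear. But the hypothesis of Lemma~\ref{L:20210616-418} explicitly says that $R$, $P_i$, $Q_j$ are \emph{never} collinear, so in this setting the lines $\Ma_i$ simply do not exist. The whole point of the lemma is to handle a configuration where those three-point lines are absent; you cannot import Section~\ref{S1bc}'s notation wholesale. For the same reason the curves $\Ca_i$ you propose are problematic: they would require the points $P_2,P_4,\dots,P_{2b-2}$, hence $c\ge 2b-2$, whereas the hypothesis only gives $c\ge b+2$, which is weaker as soon as $b\ge 5$.

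The paper's argument is both correct and much simpler than what you sketch. It takes $M_i$ to be the line through $Q_i$ and $R$ only (a two-point line, well defined since $R\notin L_2$), and sets
\[
\Fa = bL_1 + (b-1)L_2 + M_1 + \cdots + M_b,
\]
which has degree $3b-1$ and, thanks to the non-collinearity hypothesis, passes through each $P_i$ with multiplicity exactly $b$, each $Q_i$ with multiplicity $(b-1)+1=b$, and $R$ with multiplicity $b$. No irreducible higher-degree curves, no $\Ca_i$, no B\'ezout step. The peeling argument then strips only lines: $L_1$, then $L_2$, then $L_1$ again, then the $M_i$. Finally, rather than treating all $c\ge b+2$ at once, the paper proves the case $c=b+2$ directly and then, for $c>b+2$, uses the same $\Fa$ (which still has multiplicity $b$ at every point of $\Y_c$, giving the upper bound) together with the inclusion $\Y_{b+2}\subset\Y_c$ (giving the lower bound).
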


\begin{proof}  

\begin{figure}[ht] 
\centering
\vskip -3.3cm
\psset{xunit=.8cm,yunit=.8cm,algebraic=true,dimen=middle,dotstyle=o,dotsize=7pt 0,linewidth=1pt,arrowsize=3pt 2,arrowinset=0.25}
\begin{pspicture*}(-7.14,-9.2)(12.4,8.6)

\psline[linewidth=1pt](0.04,4.38)(7.9,4.38)
\psline[linewidth=1pt](0.34,2.32)(6.,3.5)

\rput[tl](-0.33,3.55){\tiny$R$}
\rput[tl](-0.48,4.53){\tiny$L_1$}
\rput[tl](-0.16,2.45){\tiny$L_2$}
\rput[tl](2.79,4.59){\Huge$\cdots$}
\rput[tl](2.55,3.02){\Huge$\cdot$}
\rput[tl](3.05,3.13){\Huge$\cdot$}
\rput[tl](3.55,3.24){\Huge$\cdot$}

\begin{scriptsize}
\psdots[dotstyle=*](0.2,3.42)
\psdots[dotstyle=*](1,4.4)
\psdots[dotstyle=*](2.0,4.38)
\psdots[dotstyle=*](5.0,4.38)
\psdots[dotstyle=*](6.0,4.38)
\psdots[dotstyle=*](7.0,4.38)
\psdots[dotstyle=*](1.2713797048392632,2.514939938222171)
\psdots[dotstyle=*](2.1432845212218283,2.6974316439766617)
\psdots[dotstyle=*](4.268294245509667,3.142201121153186)
\psdots[dotstyle=*](5.087365290012585,3.313634595584029)
\end{scriptsize}
\end{pspicture*}\vskip -9.4cm
\caption{The scheme $\Y_c$ }\label{FIG:20210704-005}
\end{figure}

If $b=1$, $\Y_c$ is  a  $\k$-configuration of type $(2,c)$, hence $\widehat\alpha(\Y_c)=2$ follows from Corollary \ref {P:20210529-206}. 
The proof for  $b=2$ is analogous to the proof for $b>2$, and it is left to the reader,
so assume $b>2$.

First we prove the lemma for  $c=b+2$. For this case, we denote 
$\Y_{b+2}$ simply by $\Y$.
Let
$M_i$ be the line through $Q_i$ and $R$, ( $1\le i \le b$), 
and let
$$
\Fa=bL_1+(b-1)L_2+M_1+\cdots+M_b.
$$
 Note that $\deg \Fa=3b-1$ , and $\Fa$ has  multiplicity exactly $b$ at all points of $\Y$. Hence for $m >0$
$$
m \Fa \in [I_{bm \Y}]_{(3b-1)m}.
$$
Now we will show that for $m>0$, 
$$
[I_{bm \Y}]_{(3b-1)m -1} = \{0\},
$$
and the conclusion will  follow from Lemma \ref {L:20210722-1}.

Assume that for some $m >0$, 
$
[I_{bm \Y}]_{(3b-1)m -1} \neq \{0\}.
$

By Lemma \ref {L:20210429-201},  
$L_1$ is a fixed component of multiplicity at least 
$$
\left\lceil \frac{b(b+2)m-(3b-1)m+1}{b+1} \right\rceil
\ge (b-2)m.
$$
So we can remove $(b-2)mL_1$, and we get that
$$\dim[I_{bm \Y}]_{(3b-1)m -1} = \dim [I_{bm \Y-(b-2)mL_1}]_{(2b+1)m -1} . $$
If this dimension is different from zero, we get that 
$L_2$ is a fixed component of multiplicity at least 
$$
\left\lceil \frac{b^2m-(2b+1)m+1}{b-1} \right\rceil =  (b-2)m+ 
\left\lceil \frac{(b-3)m+1}{b-1} \right\rceil ,
$$
and then that $L_1$ is a fixed component of multiplicity at least 
$$
\left\lceil \frac{2(b+2)m-(b+3)m+1}{b+1} \right\rceil =  m +\left\lceil \frac{1}{b+1} \right\rceil
.$$
Hence
$$
\begin{array}{lllllll}
\dim [I_{bm\Y}]_{(3b-1)m -1} &=&
\dim [I_{bm \Y-(b-1)m\La_1-(b-2)m\La_2}]_{(b+2)m -1}\\
&=&\dim [I_{\sum_{i=1}^{b+2} mP_i+\sum_{i=1}^b 2mQ_i+bm R}]_{(b+2)m -1}.
\end{array}
$$
Now, by Bezout's Theorem, each $M_i$ is a fixed component ( $1\le i \le b$) for $[I_{bm \Y}]_{(3b-1)m -1}$. 
Thus, from the equalities above, we have that $\Fa$ is a curve of degree $ 3b-1$ of the linear system $[I_{b \Y}]_{3b-2}$, a contradiction.

Hence $\bar m>1$, where
\begin {equation} \label {barm-per-lemma}  \bar m =\min \{ m |[I_{mb\Y}]_{m (3b-1)-1}\neq\{0\} \}.
\end {equation}
Now from the computation above,
$\Fa$ is a fixed component for the linear system $[I_{\bar mb\Y}]_{\bar m (3b-1)-1}$,
hence 
$$\dim [I_{\bar mb\Y}]_{\bar m (3b-1)-1}=
\dim [I_{\bar mb\Y-\Fa}]_{\bar m (3b-1)-1-(3b-1)}
=\dim [I_{(\bar m-1)b\Y}]_{(\bar m-1) (3b-1)-1},$$
which is zero by  (\ref {barm-per-lemma}), a contradiction.

Now consider the case $c>b+2$.
Since also in this case $m \Fa \in [I_{bm \Y}]_{(3b-1)m}$, then $\widehat\alpha(\Y_c) \leq \frac{3b-1}{b}$.
Moreover, since $\Y _{b+2}\subset \Y_{c}$, then $\widehat\alpha(\Y_{b+2})\leq \widehat\alpha(\Y_c),$ and the conclusion follows.
\end{proof} 

\begin{thm}\label{T:20210617-419}
Let $\X$ be a { standard} $\k$-configuration  of type $(1,b,c)$  with 
$c \geq 2b+2$. Then 
$$
\widehat\alpha(I_\X)=\frac{3b-1}{b}.
$$
\end{thm}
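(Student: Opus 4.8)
The plan is to establish the two inequalities $\widehat\alpha(I_\X)\le\frac{3b-1}{b}$ and $\widehat\alpha(I_\X)\ge\frac{3b-1}{b}$ separately. The second one will be essentially free once we have isolated inside $\X$ a subconfiguration of the type treated in Lemma~\ref{L:20210616-418}, so the real work is the bookkeeping that lets us do this when $c\ge 2b+2$ (note that a standard $\k$-configuration of type $(1,b,c)$ has $b\ge 2$).

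For the upper bound I would exhibit an explicit curve. Using the coordinates of Definition~\ref{standardk}, set $\Fa=b\La_1+(b-1)\La_2+M_1+\cdots+M_b$, where $M_i$ is the line joining $R$ and $Q_i$. Since $R\notin\La_1\cup\La_2$ lies on every $M_i$, each $Q_i$ lies on $\La_2$ and on $M_i$ but on no other $M_j$ (because $\La_2\cap M_j=\{Q_j\}$), and each $P_i$ lies on $\La_1$ (and on at most one extra line among the $M_j$), one checks directly that $\Fa$ has multiplicity at least $b$ at every point of $\X$ and degree $b+(b-1)+b=3b-1$. Hence $m\Fa\in[I_{bm\X}]_{(3b-1)m}$ for every $m>0$, which gives $\widehat\alpha(I_\X)\le\frac{3b-1}{b}$ (by subadditivity of $t\mapsto\alpha(I_\X^{(t)})$, exactly as in the proof of Corollary~\ref{P:20210529-206}).

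For the lower bound, I would locate $b+2$ of the points $P_i$ that are not involved in any collinear triple with $R$ and a $Q_j$. With $P_i=[1:i-1:0]$, $Q_j=[1:j-1:1]$, $R=[1:0:2]$, the determinant $\det\begin{pmatrix}1&0&2\\1&i-1&0\\1&j-1&1\end{pmatrix}=2j-i-1$ vanishes if and only if $i=2j-1$; hence the only indices $i\in\{1,\dots,c\}$ for which $P_i$ lies on a line through $R$ and some $Q_j$ ($1\le j\le b$) are the $b$ odd values $1,3,\dots,2b-1$. Since $c\ge 2b+2$, there remain at least $c-b\ge b+2$ other indices, so we may choose points $P_{i_1},\dots,P_{i_{b+2}}$ such that no three of $R,P_{i_k},Q_j$ are collinear. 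The set $\X'=\{P_{i_1},\dots,P_{i_{b+2}}\}\cup\{Q_1,\dots,Q_b\}\cup\{R\}$ then satisfies the hypotheses of Lemma~\ref{L:20210616-418} with $L_1=\La_1$, $L_2=\La_2$ (indeed $\La_1\cap\La_2=[0:1:0]$ is none of the listed points and $R$ lies off both lines), so $\widehat\alpha(I_{\X'})=\frac{3b-1}{b}$. Since $\X'\subseteq\X$ forces $I_\X\subseteq I_{\X'}$ and therefore $\widehat\alpha(I_{\X'})\le\widehat\alpha(I_\X)$, we obtain $\widehat\alpha(I_\X)\ge\frac{3b-1}{b}$, and combining with the upper bound completes the proof.

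The genuinely new content has already been absorbed into Lemma~\ref{L:20210616-418}; what remains — and the point explaining why the constant stabilizes precisely at $c=2b+2$ — is the collinearity bookkeeping of the previous paragraph: one must see that exactly $b$ of the $P_i$ are ``blocked'' by a collinear triple through $R$, so that only once $c\ge 2b+2$ do enough unblocked $P_i$ survive to rebuild a general-position configuration inside $\X$. (As an alternative to the explicit curve $\Fa$, the upper bound also follows by semicontinuity, viewing $\X$ as a flat degeneration of the configurations $\Y_c$ of Lemma~\ref{L:20210616-418} obtained by moving $R$ to a general point off $\La_1\cup\La_2$, but the curve $\Fa$ is more in the spirit of the rest of the paper.)
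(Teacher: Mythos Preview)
Your proof is correct and follows essentially the same approach as the paper's. The paper uses the identical curve $\Fa=b\La_1+(b-1)\La_2+\Ma_1+\cdots+\Ma_b$ for the upper bound (your lines $M_i$ through $R$ and $Q_i$ are precisely the paper's $\Ma_i$, since $R,Q_i,P_{2i-1}$ are collinear), and for the lower bound it sets $\Y=\X\setminus\{P_1,P_3,\dots,P_{2b-1}\}$ and invokes Lemma~\ref{L:20210616-418} directly; your determinant computation makes explicit the collinearity bookkeeping the paper leaves implicit, and choosing only $b+2$ of the unblocked $P_i$ (rather than all $c-b$ of them) is an inessential variant since the lemma applies either way.
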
 

\begin{proof} Let us consider the following curve $\Fa$ of degree $(3b-1)$ with  multiplicities at least $b$  at the points in $\X$
$$
\Fa=b\La_1+(b-1)\La_2+\Ma_1+\cdots+\Ma_b.
$$
Then, for $m>0$, we have
$
m\Fa\in [I_{mb\X}]_{(3b-1)m}.
$
This implies,
$$
\widehat \alpha(I_\X)\le \frac{3b-1}{b}.
$$
To conclude the proof set
$
\Y=\X-\{P_1,P_3,\dots,P_{2b-1}\}.
$
Then, by Lemma~\ref{L:20210616-418} and since $\Y\subseteq \X$, we get
$$\frac{3b-1}{b}= \widehat \alpha(I_\Y) \le \widehat \alpha(I_\X)\le \frac{3b-1}{b}.$$
This completes the proof.
\end{proof}

\section{Waldschmidt constants of  standard $\k$-configurations of type $(a,b,c)$, with $a\geq 2$.}\label{Sabc}

In this section we  study the Waldschmidt constant of  a standard $\k$-configuration of type $(a,b,c)$, with $a \geq 2$. We  prove that, except for the type  $(2,3,4)$, and  for the type  $(2,3,5)$ (see Theorem \ref{M20210728-234} and Remark~\ref{rem235}), then   the Waldschmidt constant is $3$.
For this section we fix the following notation (see Figure \ref{figuragenerale2}).

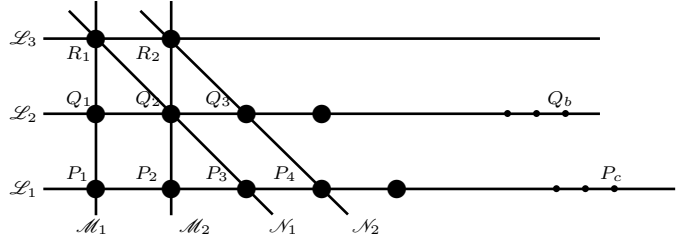
\begin{figure}[ht] 
\centering
\vskip -5.5cm
\psset{xunit=1cm,yunit=1cm,algebraic=true,dimen=middle,dotstyle=o,dotsize=7pt 0,linewidth=.5pt,arrowsize=3pt 2,arrowinset=0.25}
\begin{pspicture*}(-10,-8.8)(5,8.8)

\psline[linewidth=1pt](-6.7,1)(1.7,1)

\psline[linewidth=1pt](-6.7,2)(0.7,2)

\psline[linewidth=1pt](-6.7,3)(0.7,3)

\psline[linewidth=1pt](-6,3.5)(-6,0.65)

\psline[linewidth=1pt](-5,3.5)(-5,0.65)

\psline[linewidth=1pt](-6.36,3.37)(-3.65,0.66)

\psline[linewidth=1pt](-5.41,3.37)(-2.65,0.66)

\rput[tl](-0.65,2.17){\Huge $\cdots$}
\rput[tl](0.0,1.17){\Huge$\cdots$}

\rput[tl](-7.15,1.1){\tiny$\La_1$}
\rput[tl](-7.15,2.1){\tiny$\La_2$}
\rput[tl](-7.15,3.1){\tiny$\La_3$}

\rput[tl](-6.40,2.9){\tiny$R_1$}
\rput[tl](-5.48,2.9){\tiny$R_2$}
\rput[tl](-6.40,2.3){\tiny$Q_1$}
\rput[tl](-5.48,2.3){\tiny$Q_2$}
\rput[tl](-4.55,2.3){\tiny$Q_3$}
\rput[tl](-0.00,2.3){\tiny$Q_b$}

\rput[tl](-6.40,1.3){\tiny$P_1$}
\rput[tl](-5.48,1.3){\tiny$P_2$}
\rput[tl](-4.55,1.3){\tiny$P_3$}
\rput[tl](-3.65,1.3){\tiny$P_4$}
\rput[tl](0.7,1.3){\tiny$P_c$}

\rput[tl](-6.27,0.6){\tiny$\Ma_1$}
\rput[tl](-4.9,0.6){\tiny$\Ma_2$}
\rput[tl](-3.7,0.6){\tiny$\Na_1$}
\rput[tl](-2.6,0.6){\tiny$\Na_2$}

\begin{scriptsize}
\psdots[dotstyle=*](-6,3)
\psdots[dotstyle=*](-5,3)
\psdots[dotstyle=*](-6,2)
\psdots[dotstyle=*](-5,2)
\psdots[dotstyle=*](-4,2)
\psdots[dotstyle=*](-3,2)
\psdots[dotstyle=*](-6,1)
\psdots[dotstyle=*](-5,1)
\psdots[dotstyle=*](-4,1)
\psdots[dotstyle=*](-3,1)
\psdots[dotstyle=*](-2,1)
\end{scriptsize}
\end{pspicture*}
\vskip -9.4cm
\caption{A standard $\k$-configuration of type $(2,b,c)$}\label{figuragenerale2}
\end{figure}

Let  $P_i=[1: i-1: 0]$, for $1\le i\le c$, let $Q_i=[1:i-1 :1]$, for $1\le i\le b$,  let $R_1=[1:0:2]$  and $R_2=[1:1:2]$  be the points of $\X$, and let 
$$
\begin{array}{llllllllllllllllll}
\La_1 & \text{be the line through  $P_1,P_2,\dots,P_{c}$;} \\
\La_2 & \text{be the line through  $Q_1,Q_2,\dots,Q_{b}$;} \\
\La_3 & \text{be the line through  $R_1,R_2$;} \\
\Ma_1 & \text{be the line through  $P_1,Q_1,R_1$;} \\
\Ma_2 & \text{be the line through  $P_2,Q_2,R_2$;} \\
\Na_1 & \text{be the line through  $P_3,Q_2,R_1$;} \\
\Na_2 & \text{be the line through  $P_4,Q_3,R_2$.} \\
\end{array}
$$
First we compute the Waldschmidt constant of a $\k$-configuration  of type $(2,b,c)\neq (2,3,5)$.

\begin{thm} \label{M20210728-234}
Let $\X$ be a {standard} $\k$-configuration  of type $(2,3,4)$. Then the Waldschmidt constant of $\X$ is
$$
\widehat\alpha (I_\X)=\frac{17}{6}.
$$
\end{thm}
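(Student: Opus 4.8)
The plan is to apply the three-step method of Section~\ref{M20210723-1}, with the notation of Section~\ref{Sabc} specialized to $(a,b,c)=(2,3,4)$; thus $\X$ consists of $P_1,\dots,P_4$ on $\La_1$, of $Q_1,Q_2,Q_3$ on $\La_2$, and of $R_1,R_2$ on $\La_3$. For Step~1 I need a curve $\Fa$ of degree $17$ meeting each of the nine points of $\X$ with multiplicity exactly $6$; then $m\Fa\in\big[I_{6m\X}\big]_{17m}$ for all $m>0$, whence $\widehat\alpha(I_\X)\le\frac{17}{6}$. Such an $\Fa$ cannot be a union of lines: the only lines through at least three points of $\X$ are $\La_1,\La_2$ and the transversals $\Ma_1,\Ma_2,\Na_1,\Na_2$, and a short computation — exploiting that $Q_2$ lies on both $\Ma_2$ and $\Na_1$ — shows that any union of lines meeting all nine points with a common multiplicity $\mu$ has degree at least $3\mu$, giving only the bound $\widehat\alpha(I_\X)\le 3$. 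The extra ingredient is that the six points $P_2,P_3,Q_1,Q_3,R_1,R_2$ lie on a \emph{smooth} conic $\Ca$ (one writes its equation down directly from the coordinates of Definition~\ref{standardk} and checks that the associated symmetric $3\times 3$ matrix is invertible). I then take
$$\Fa=3\La_1+2\La_2+3\Ma_1+2\Ma_2+2\Na_1+3\Na_2+\Ca ,$$
a curve of degree $3+2+3+2+2+3+2=17$ that, as a point-by-point check confirms, has multiplicity exactly $6$ at every point of $\X$ (note that $\La_3$ does not occur in $\Fa$).

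Step~2 is to prove $\big[I_{6m\X}\big]_{17m-1}=\{0\}$ for all $m\ge 1$, after which Lemma~\ref{L:20210722-1} (with $\mu=6$, $d=17$) gives the result. Set $\bar m=\min\{m\mid \big[I_{6m\X}\big]_{17m-1}\ne\{0\}\}$ and argue by contradiction. The decisive point is the \emph{order} in which fixed components are removed: one first peels the transversals $\Ma_1,\Ma_2,\Na_1,\Na_2$ (while the nine points still carry their full multiplicity $6\bar m$), then $\La_1$ and $\La_2$, and finally $\Ca$. Concretely, for $\bar m\ge 2$ repeated application of Lemma~\ref{L:20210429-201} in the order $2\Ma_2$, $2\Na_2$, $3\Ma_1$, $2\Na_1$, $1\Na_2$, $3\La_1$, $2\La_2$ forces each of $\La_1,\La_2,\Ma_1,\Ma_2,\Na_1,\Na_2$ to be a fixed component of $\big[I_{6\bar m\X}\big]_{17\bar m-1}$ with at least the multiplicity it has in $\Fa$ (the ceiling estimates all come out because at each stage the relevant triple of points is ``overloaded'' relative to the current degree). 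After these removals one is in degree $17\bar m-16$ with multiplicity $6\bar m-5$ at each of the six points of $\Ca$, and since $6(6\bar m-5)>2(17\bar m-16)$, Bézout's theorem (as in the treatment of the curves $\Ca_i$ in Theorem~\ref{M2b-2}) forces $\Ca$ to be a fixed component as well. Thus $\Fa$ — which has multiplicity exactly $6$ at each point — is a fixed component of $\big[I_{6\bar m\X}\big]_{17\bar m-1}$, and removing it leaves $\dim\big[I_{6\bar m\X}\big]_{17\bar m-1}=\dim\big[I_{6(\bar m-1)\X}\big]_{17(\bar m-1)-1}$, which is $0$ by minimality of $\bar m$: contradiction. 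For $\bar m=1$ the same strategy (with $\Ma_1$ peeled to multiplicity only $2$, since degree $16$ is less overloaded) leaves, after removing a curve of degree $16$, a linear system $\big[I_{P_1+Q_1+R_1}\big]_{0}=\{0\}$, again a contradiction.

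By Steps~1 and~2 the initial degree of $6m\X$ is exactly $17m$ for all $m>0$, so $\widehat\alpha(I_\X)=\frac{17}{6}$. The step I expect to require the most care is the bookkeeping in Step~2: the peeling order has to be chosen so that every use of Lemma~\ref{L:20210429-201} returns a multiplicity at least as large as the one prescribed in $\Fa$ (removing the transversals before $\La_1,\La_2$ is precisely what makes this work, and the unequal multiplicities $3,2,3,2,2,3$ of the lines of $\Fa$, together with a handful of ceiling computations, are used here), and one must verify that after all the line removals the six points on $\Ca$ retain enough multiplicity for the Bézout step to go through uniformly in $\bar m$. These computations are longer than, but entirely parallel to, those already carried out in the proofs of Theorems~\ref{MMcpari}, \ref{M2b-2} and \ref{T:20210428-203}.
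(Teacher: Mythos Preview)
Your proposal is correct and uses exactly the same curve $\Fa=3\La_1+2\La_2+3\Ma_1+2\Ma_2+2\Na_1+3\Na_2+\Ca$ and the same three-step method as the paper. The execution of Step~2 differs, and yours is in fact a bit sharper. The paper peels $2m\La_1$ first, then obtains fixed multiplicities $\lceil\frac{3m+1}{2}\rceil$ for $\La_2$ and $\lceil\frac{m+1}{2}\rceil$ for each transversal; since the latter only reaches~$3$ when $m\ge 4$, the paper must check $m=1,2,3$ separately (and it does not spell out why $\Ca$ itself is fixed). Your order---transversals first, then $\La_1,\La_2$, then $\Ca$ via B\'ezout---makes the conic step explicit and, as your inequalities show, already works uniformly for $\bar m\ge 2$, leaving only $\bar m=1$ as a base case.

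One small correction: your sketch for $\bar m=1$ does not go through as written. At $m=1$ the very first peel fails, since Lemma~\ref{L:20210429-201} applied to $\Ma_2$ in degree~$16$ gives only $\lceil\frac{m+1}{2}\rceil=1$, not~$2$. For this single case you need a different order (e.g.\ start with $\La_1$, which is fixed to multiplicity $\lceil\frac{8}{3}\rceil=3$, and iterate from there) or simply appeal to a direct check, as the paper does.
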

\begin{proof}
Let 
$$
\begin{array}{lll}
\Ca     \text{ be the conic through  $P_2,P_3,Q_1,Q_3,R_1,R_2$},
\end{array}
$$
and let $\Fa$ be the following curve of degree $17$, which contains each point of   $\X$ with  multiplicity $6$
$$\Fa = 3 \La_1+2\La_2+3\Ma_1+2\Ma_2+2\Na_1+3\Na_2+ \Ca.
$$
Hence, for $m >0$,
$$
m \Fa\in [I_{6m \X}]_{17m }.
$$
 The conclusion will follows from Lemma \ref {L:20210722-1},
if we prove that for each $m >0$,
$$
\dim  [I_{6m \X}]_{17m-1 }=0.
$$
As usual, assume that for some $m$, $[I_{6m \X}]_{17m-1 }\neq \{0\}$.
By Lemma \ref {L:20210429-201},  $\La_1$ is a fixed component of multiplicity at least 
$\left\lceil \frac{24m-17m +1}{3}\right\rceil= \left\lceil \frac{7m +1}{3}\right\rceil \geq 2m $
for the plane curves of the linear system
$[I_{6m \X}]_{17m-1 }$.
By removing $2m\La_1$ and assuming that the residual linear system is not empty, by Lemma~\ref {L:20210429-201}, we get that $ \La_2$ is a fixed component of multiplicity at least $\left\lceil \frac{3m +1}{2}\right\rceil $, and 
$ \Ma_1$, $\Ma_2$, $ \Na_1$, $\Na_2$ are fixed component of multiplicity at least $\left\lceil \frac{m +1}{2}\right\rceil $.
Let 
\begin {equation} \label {barm7}  \bar m =\min \{ m | [I_{6m \X}]_{17m -1 }\neq\{0\} \}.
\end {equation}
Now we claim that  for $m = 1,2,3$, $[I_{6m \X}]_{17m -1 }=\{0\}$.  This claim can be proved directly, with the usual method.   It follows that $\bar m \geq 4$.

From the computation above,  and recalling that $ \Ma_1$, $\Ma_2$, $ \Na_1$, $\Na_2$ are fixed component of multiplicity at least $\left\lceil \frac{\bar m +1}{2}\right\rceil \geq 3$, then
$\Fa$ is a fixed component for the linear system $[I_{6\bar m \X}]_{17\bar m -1 }$,
hence 
$$\dim [I_{6\bar m \X}]_{17\bar m -1 }=
\dim [I_{6\bar m \X-\Fa}]_{17\bar m -1-17 }
=\dim [I_{6(\bar m -1)\X}]_{17(\bar m -1)-1 },$$
which is zero by (\ref {barm7}), a contradiction.
\end{proof}

We need the following lemma  to find out the Waldschmidt constant of a {standard} $\k$-configuration of type~$(2,3,6)$.

\begin{lem} \label{Giuseppe} 

Let $L_1$, $L_2$ be two distinct  lines, and let $P_1,\dots,P_{6} \in L_1$,  and $Q_1,Q_2,Q_3\in L_2$  be distinct points such that $ L_1 \cap L_2 \not\in \Y$, where 
$$\Y = P_1+\cdots+P_{6} +Q_1+\cdots +Q_3 .$$
 Let $m$ be a positive integer, then the curve $2m L_1+m L_2$ is a fixed component for the linear system
$
[I_{3m \Y}]_{9 m -1 }.$

\end{lem}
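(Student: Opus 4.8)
The plan is to apply Lemma~\ref{L:20210429-201} twice, first to force $L_1$ to appear with multiplicity $2m$, and then — after removing $2mL_1$ — to force $L_2$ to appear with multiplicity $m$. Assume $[I_{3m\Y}]_{9m-1}\neq\{0\}$ (otherwise there is nothing to prove, since then every curve of that degree through $3m\Y$ vacuously has $2mL_1+mL_2$ as a component). Note $\deg(2mL_1+mL_2)=3m\le 9m-1$ for $m\ge 1$, so the degree count is never an obstruction. Each $P_i$ lies on $L_1$ with the fat point $3mP_i$, so applying part~(ii) of Lemma~\ref{L:20210429-201} to the six collinear points $P_1,\dots,P_6$ on $L_1$ with $s=6$ and degree $d=9m-1$, the line $L_1$ is a fixed component of multiplicity at least
$$
\left\lceil \frac{6\cdot 3m-(9m-1)}{6-1}\right\rceil=\left\lceil\frac{9m+1}{5}\right\rceil\ge 2m
$$
for every $m\ge 1$ (indeed $9m+1\ge 10m$ fails, so one checks directly: for $m=1$ it is $\lceil 10/5\rceil=2$, and in general $\lceil(9m+1)/5\rceil\ge 2m \iff 9m+1\ge 10m-4\iff m\le 5$; hence this crude bound is \emph{not} enough for large $m$ and I must argue differently — see the obstacle below).

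Since the naive ceiling estimate degrades as $m$ grows, the real plan is to proceed by induction on $m$, mirroring the $\bar m$-minimality device used throughout the paper. Set $\bar m=\min\{m\mid [I_{3m\Y}]_{9m-1}\neq\{0\}\text{ and }2mL_1+mL_2\text{ is not a fixed component}\}$, and suppose for contradiction $\bar m$ exists. For $m=1$: by Lemma~\ref{L:20210429-201}, $L_1$ is a fixed component of $[I_{3\Y}]_{8}$ with multiplicity $\ge\lceil 10/5\rceil=2$; removing $2L_1$ drops us to degree $6$ with residual scheme $\sum P_i+3(Q_1+Q_2+Q_3)$ (the $P_i$ now simple), and Lemma~\ref{L:20210429-201} applied to $Q_1,Q_2,Q_3$ on $L_2$ ($s=3$, $d=6$) gives $L_2$ as a fixed component of multiplicity $\ge\lceil(9-6)/2\rceil=2\ge 1$; so $2L_1+L_2\subseteq$ every curve, settling $m=1$. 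For $m=\bar m>1$, the inductive step removes $L_1+L_2$ from a curve of $[I_{3\bar m\Y}]_{9\bar m-1}$: since $L_1+L_2$ contains each point of $\Y$ with multiplicity exactly $1$, the residual lies in $[I_{3(\bar m-1)\Y + \text{(nothing extra)}}]_{9(\bar m-1)+1}$... — more precisely I will remove $L_1$ first (once), then re-run Lemma~\ref{L:20210429-201} on $L_2$, reducing to $[I_{3(\bar m-1)\Y}]_{9(\bar m-1)-1}$ plus a leftover fat-point contribution, and invoke minimality of $\bar m$.

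The cleaner execution, which I expect to be the one actually carried out, bundles both multiplicities at once. Apply Lemma~\ref{L:20210429-201} to $L_1$ to get multiplicity $\ge 2m$ exactly when the ceiling allows; when it does not, observe that $2mL_1+mL_2$ being a fixed component of $[I_{3m\Y}]_{9m-1}$ is equivalent, after subtracting, to $[I_{3m\Y-2mL_1-mL_2}]_{(9m-1)-3m}=[I_{\sum_i mP_i+\sum_j 2mQ_j + \emptyset}]_{6m-1}$ being reached — but this residual system sits inside a smaller instance, and here is where an inductive comparison with $[I_{3(m-1)\Y}]_{9(m-1)-1}$ closes the loop: subtracting one more copy of $L_1$ (now the $P_i$ are simple, so one more $L_1$ is forced by Bézout among $6$ points in degree $\le$ small), etc. The main obstacle is precisely this: the single application of Lemma~\ref{L:20210429-201} gives only $\lceil(9m+1)/5\rceil$, which is below $2m$ once $m\ge 6$, so the bound must be boosted by the minimality/induction argument on $\bar m$ — combining the $L_1$-removal, the $L_2$-removal, and a final reduction to the $(m-1)$ case, exactly as in Step~2 of the Method in Section~\ref{M20210723-1}. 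Once that reduction is in place the contradiction with the definition of $\bar m$ is immediate, and the lemma follows.
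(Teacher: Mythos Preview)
Your diagnosis of the obstacle is exactly right: a single application of Lemma~\ref{L:20210429-201} to $L_1$ gives only $\lceil(9m+1)/5\rceil$, which is below $2m$ once $m\ge 6$, so some iteration is unavoidable. But your proposed induction on $m$ (take $\bar m=\min\{m:\text{claim fails}\}$ and reduce to $\bar m-1$) does not go through, and the reason is structural, not cosmetic. After peeling off any copies of $L_1$ and $L_2$ from $[I_{3\bar m\Y}]_{9\bar m-1}$, the residual scheme never has the shape $3(\bar m-1)\Y$ in degree $9(\bar m-1)-1$: removing $2L_1+L_2$ lands you in degree $9\bar m-4$ with multiplicities $3\bar m-2$ at the $P_i$ and $3\bar m-1$ at the $Q_j$; removing $L_1+L_2$ lands you in degree $9\bar m-3$ with multiplicities $3\bar m-1$ everywhere. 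Neither matches $[I_{3(\bar m-1)\Y}]_{9(\bar m-1)-1}$, so the hypothesis for $\bar m-1$ simply does not apply to the residual. Your own arithmetic (``the residual lies in $[I_{3(\bar m-1)\Y}]_{9(\bar m-1)+1}$'') is already off, and the ``plus a leftover fat-point contribution'' does not repair the mismatch.

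The paper's argument avoids this by \emph{not} inducting on $m$ at all. With $m$ fixed, it sets
\[
M=\{m'\mid 2m'L_1+m'L_2\text{ is a fixed component of }[I_{3m\Y}]_{9m-1}\}
\]
and takes $\bar m=\max M$ (not $\min$). One checks $1\in M$ exactly as you did. If $\bar m<m$, remove $2\bar mL_1+\bar mL_2$; the residual has the $P_i$ with multiplicity $3m-2\bar m$ and the $Q_j$ with multiplicity $3m-\bar m$, in degree $9m-1-3\bar m$. Now Lemma~\ref{L:20210429-201} gives $L_1$ fixed to multiplicity at least $\lceil(9(m-\bar m)+1)/5\rceil\ge 2$ and $L_2$ to multiplicity at least $\lceil 1/2\rceil=1$, so $\bar m+1\in M$, contradicting maximality. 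The point you missed is that the iteration stays inside the single linear system $[I_{3m\Y}]_{9m-1}$, peeling off $2L_1+L_2$ one copy at a time; the ceiling bounds then only need $m-\bar m\ge 1$, which is automatic, rather than the unachievable $\lceil(9m+1)/5\rceil\ge 2m$.
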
 
\begin {proof} 
Set
$$M=
\{ m' \ |\  2 m' L_1 +  m' L_2   \hbox { is a fixed component for the linear system} \ \ 
[I_{3m\Y}]_{9 m -1 }\}.$$
Since by Lemma \ref {L:20210429-201}, $L_1$ and $L_2$ are  fixed components of multiplicity at least $\left\lceil \frac{18m-9m+1}{5}\right\rceil\geq 2 $, and 
$\left\lceil \frac{9m-9m+1}{2}\right\rceil=1 $, respectively, then $2  L_1 +  L_2$ is a fixed component for $[I_{3m\Y}]_{9 m -1 }$, and so
$1 \in M$.
Let $$ \bar m=\max M .$$
If $\bar m\geq m$ we are done, so assume  that  $\bar m<m$. By the definition of $\bar m$, we have that 
$2 \bar mL_1 +  \bar m L_2 $ is a fixed component for the linear system $[I_{3m \Y}]_{9 m -1 } $. Hence
$$
[I_{3m \Y}]_{9 m -1 } =H\cdot [I_{3m \Y-2 \bar m\La_1 -  \bar m \La_2}]_{9 m -1 -3 \bar m} = H\cdot [I_{\sum_{i=1}^6 P_i (3m-2\bar m) +\sum_{i=1}^3 (3m-\bar m) Q_i}]_{9 m -1 -3 \bar m} ,
$$
where $H$ is a form representing the curve $2 \bar mL_1 +  \bar m L_2 $ .
Now, by Lemma \ref {L:20210429-201}, we get that, for the curve of the linear system 
$[I_{3m \Y-2 \bar mL_1 -  \bar m L_2}]_{9 m -1 -3 \bar m}$, 
 $L_1 $ is a fixed component of multiplicity at least 
$$\left\lceil \frac{6(3m-2\bar m)-(9 m -1 -3 \bar m)}{5}\right\rceil
= \left\lceil \frac{9m -9 \bar m +1 }{5}\right\rceil\geq2 ,$$ and $L_2 $ is a fixed component 
of multiplicity at least 
$$\left\lceil \frac{3(3m-\bar m)-(9 m -1 -3 \bar m)}{2}\right\rceil=1 .$$
Hence  $2  L_1 +  L_2$ is a fixed component for $[I_{3m \Y-2 \bar mL_1 -  \bar m L_2}]_{9 m -1 -3 \bar m}$ and so, 
$2 (\bar m+1)L_1 +  (\bar m +1)L_2 $ is a fixed component for the linear system $[I_{3m \Y}]_{9 m -1 } $, a contradiction.
\end{proof}

\begin{thm} \label{M20210730-236}
Let $\X$ be a  standard $\k$-configuration  of type $(2,3,6)$. Then $$
\widehat\alpha (I_\X)=3.
$$
\end{thm}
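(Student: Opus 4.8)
The plan is to run the method of Section~\ref{M20210723-1} with the curve $\Fa = \La_1 + \La_2 + \La_3$, extracting the missing information from Lemma~\ref{Giuseppe} along the way. Since $3m\Fa = 3m\La_1 + 3m\La_2 + 3m\La_3$ is a curve of degree $9m$ passing through every point of $\X$ with multiplicity exactly $3m$ (each point lies on exactly one of the three lines), $3m\Fa \in [I_{3m\X}]_{9m}$, so $\alpha(I_{3m\X}) \le 9m$. Hence, by Lemma~\ref{L:20210722-1} applied with $\mu=3$ and $d=9$, it suffices to prove that $[I_{3m\X}]_{9m-1}=\{0\}$ for every $m>0$; this then gives $\widehat\alpha(I_\X)=9/3=3$.

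To prove that vanishing, I would first rebalance the scheme using Lemma~\ref{Giuseppe}. Put $\Y=\{P_1,\dots,P_6,Q_1,Q_2,Q_3\}\subseteq\X$; its nine points satisfy the hypotheses of Lemma~\ref{Giuseppe} with $L_1=\La_1$, $L_2=\La_2$ (one checks $\La_1\cap\La_2=[0:1:0]\notin\Y$). Since $3m\Y\subseteq 3m\X$, every $C\in[I_{3m\X}]_{9m-1}$ lies in $[I_{3m\Y}]_{9m-1}$, so by Lemma~\ref{Giuseppe} it is divisible by the form defining $2m\La_1+m\La_2$. Removing this fixed component and using $P_i\notin\La_2\cup\La_3$, $Q_i\notin\La_1\cup\La_3$, $R_1,R_2\notin\La_1\cup\La_2$, one reduces to showing
$$[I_\Z]_{6m-1}=\{0\},\qquad \Z=\sum_{i=1}^{6}mP_i+\sum_{i=1}^{3}2mQ_i+3mR_1+3mR_2,$$
where the $P_i$ lie on $\La_1$, the $Q_i$ on $\La_2$, and $R_1,R_2$ on $\La_3$. (The degree $6m-1$ is sharp, since $m\La_1+2m\La_2+3m\La_3$ is a curve of degree $6m$ through $\Z$.)

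For this last vanishing I would argue by B\'ezout on the three lines. Suppose $0\ne C'\in[I_\Z]_{6m-1}$ and write $C'=\alpha\La_1+\beta\La_2+\gamma\La_3+D$, where $\alpha,\beta,\gamma\ge 0$ are the multiplicities of the three lines as components of $C'$ and $D$ is a curve of degree $6m-1-\alpha-\beta-\gamma$ having none of $\La_1,\La_2,\La_3$ as a component. If $D=0$, then $\alpha+\beta+\gamma=6m-1$, while the multiplicity conditions of $\Z$ force $\alpha\ge m$, $\beta\ge 2m$, $\gamma\ge 3m$, a contradiction. If $D\ne 0$, applying B\'ezout to $D$ with $\La_1$ (six marked points), with $\La_2$ (three marked points), and with $\La_3$ (two marked points) yields
$$5\alpha\ge 1+\beta+\gamma,\qquad 2\beta\ge 1+\alpha+\gamma,\qquad \gamma\ge 1+\alpha+\beta;$$
substituting the third into the second gives $\beta\ge 2+2\alpha$, and then substituting both into the first gives $5\alpha\ge 6+5\alpha$, a contradiction. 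Hence $[I_\Z]_{6m-1}=\{0\}$, and the proof is complete.

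The main obstacle is the rebalancing step. The curve $\La_1+\La_2+\La_3$ has multiplicity only $1$ at the points of $\X$, so it is not itself a fixed component of $[I_{m\X}]_{3m-1}$, and a direct iteration of Lemma~\ref{L:20210429-201} on $[I_{3m\X}]_{9m-1}$ does not terminate cleanly, because $\La_3$ carries only two marked points but multiplicity $3m$. Lemma~\ref{Giuseppe} is exactly the device that converts the problem into the scheme $\Z$, whose multiplicities $m:2m:3m$ are matched to the point counts $6:3:2$ on $\La_1,\La_2,\La_3$; once that is in place the three B\'ezout inequalities become mutually incompatible, and the rest is routine.
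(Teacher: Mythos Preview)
Your proof is correct. The first half---the upper bound via $3m(\La_1+\La_2+\La_3)$ and the reduction, through Lemma~\ref{Giuseppe}, to the vanishing of $[I_\Z]_{6m-1}$ with $\Z=\sum mP_i+\sum 2mQ_i+\sum 3mR_j$---is exactly the paper's argument.

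The difference is in how you finish. The paper proves $[I_\Z]_{6m-1}=\{0\}$ by induction on $m$: assuming a curve exists, it peels off $\La_1+\La_2+\La_3$ by B\'ezout and then a further $\La_2+2\La_3$ via Lemma~\ref{L:20210429-201}, arriving at the same statement for $m-1$. You instead extract the full line multiplicities $\alpha,\beta,\gamma$ in one stroke and derive the three B\'ezout inequalities $5\alpha\ge 1+\beta+\gamma$, $2\beta\ge 1+\alpha+\gamma$, $\gamma\ge 1+\alpha+\beta$, which are mutually contradictory. This is a cleaner, induction-free finish: the multiplicity pattern $m:2m:3m$ matched to the point counts $6:3:2$ makes the degree $6m-1$ exactly one too small, and your inequalities capture that deficit directly. (Note, for completeness, that your three inequalities hold unconditionally: since $\mathrm{mult}_{P_i}(D)=\mathrm{mult}_{P_i}(C')-\alpha\ge m-\alpha$ as integers, B\'ezout gives $\deg D\ge 6(m-\alpha)$ even when the right side is negative, so no case split on $\alpha\lessgtr m$ is needed.) The paper's inductive peeling is more in the spirit of the general Method of Section~\ref{M20210723-1}, but your direct argument is shorter here.
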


\begin{proof}
Let $\Fa$ be the following curve of degree $9$, which contains each point of   $\X$ with  multiplicity $3$,
$$\Fa = 3 \La_1+3\La_2+3\La_3.
$$
Hence, for $m >0$,
$$
m \Fa\in [I_{3m \X}]_{9m }.
$$
 The conclusion will follows from Lemma \ref {L:20210722-1},
if we prove that for each $m >0$,
$$
\dim  [I_{3m \X}]_{9m-1 }=0.
$$
Assume that for some $m$, $[I_{3m \X}]_{9m-1 }\neq \{0\}$.
By Lemma \ref {Giuseppe},  $2m\La_1+m\La_2$ is a fixed component  for
$[I_{3m \X}]_{9m-1 }$, hence
$$\dim[I_{3m \X}]_{9m-1 }=\dim[I_{3m \X-2m\La_1-m\La_2}]_{9m-1-3m }
=\dim[I_{\sum _{i=1}^6m P_i +\sum _{i=1}^3 2m Q_i +\sum _{i=1}^2 3m R_i}]_{6m-1 }.
$$
Now if we  prove that this last dimension is zero,
we get a contradiction.
 
 {\it Claim}. 
 $$\dim [I_{\sum _{i=1}^6m P_i +\sum _{i=1}^3 2m Q_i +\sum _{i=1}^2 3m R_i}]_{6m-1 }
=0, \ \ \hbox {for each } \ m \geq 1.$$
We prove the claim by induction on $m$. It is easy to verify that it is true for $m=1$, so assume $m>1$. 
If this dimension is not zero, by Bezout's Theorem, $\La_1$, $\La_2$, $\La_3$ are  fixed components , hence
$$\dim [I_{\sum _{i=1}^6m P_i +\sum _{i=1}^3 2m Q_i +\sum _{i=1}^2 3m R_i}]_{6m-1 }=
\dim [I_{\sum _{i=1}^6 (m-1) P_i +\sum _{i=1}^3 (2m-1) Q_i +\sum _{i=1}^2 (3m-1) R_i}]_{6m-4 }.$$
If this dimension is still not zero, 
by Lemma \ref {L:20210429-201}, $\La_2$ and $\La_3$ are  fixed components of multiplicity at least 
$\left\lceil \frac{6m-3-(6m-4)}{2}\right\rceil =1,$
and 
$\left\lceil \frac{6m-2-(6m-4)}{1}\right\rceil= 2,$ respectively.
Hence
$$
\begin{array}{llll}
 &\dim [I_{\sum _{i=1}^6 (m-1) P_i +\sum _{i=1}^3 (2m-1) Q_i +\sum _{i=1}^2 (3m-1) R_i}]_{6m-4 } \\
=&\dim [I_{\sum _{i=1}^6 (m-1) P_i +\sum _{i=1}^3 2(m-1) Q_i +\sum _{i=1}^2 3(m-1) R_i}]_{6(m-1)-1 },
\end{array}$$
and this is zero by the inductive hypothesis.
\end{proof}

\begin{thm} \label{M20210730-245}
Let $\X$ be a  standard $\k$-configuration  of type $(2,4,5)$. Then the Waldschmidt constant of  $\X$ is
$$
\widehat\alpha (I_\X)=3.
$$
\end{thm}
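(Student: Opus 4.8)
The plan is to run the three-step method of Section~\ref{M20210723-1} with the cheapest possible auxiliary curve. Take $\Fa=\La_1+\La_2+\La_3$, the reduced union of the three lines carrying $\X$. Since no two of $\La_1,\La_2,\La_3$ meet at a point of $\X$, the curve $\Fa$ has degree $3$ and passes through each of the $11$ points of $\X$ with multiplicity exactly $1$, so $m\Fa\in[I_{m\X}]_{3m}$ for every $m>0$, and in particular $\widehat\alpha(I_\X)\le 3$. By Lemma~\ref{L:20210722-1}, taking $\mu=1$ and $d=3$, it then suffices to prove that $[I_{m\X}]_{3m-1}=\{0\}$ for all $m\ge 1$.

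I would establish this vanishing by minimal counterexample. Set $\bar m=\min\{m\mid [I_{m\X}]_{3m-1}\neq\{0\}\}$ and work in degree $d=3\bar m-1$. Apply Lemma~\ref{L:20210429-201} three times in succession: first to the five points on $\La_1$, then --- after removing the forced multiple of $\La_1$ --- to the four points on $\La_2$, and then to the two points on $\La_3$. This forces $\La_1$ to be a fixed component of $[I_{\bar m\X}]_{d}$ of multiplicity at least $\mu_1=\left\lceil\frac{2\bar m+1}{4}\right\rceil$, then $\La_2$ of multiplicity at least $\mu_2=\left\lceil\frac{\bar m+1+\mu_1}{3}\right\rceil$, and then $\La_3$ of multiplicity at least $\mu_1+\mu_2-\bar m+1$. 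At each of the three stages, if the residual linear system has already become $\{0\}$ we have reached a contradiction; otherwise the chain continues, and the only real step is the elementary inequality $\mu_1+\mu_2\ge\bar m+1$, a short check with the ceiling functions, which makes the multiplicity bound for $\La_3$ at least $2$. Hence $\La_1,\La_2,\La_3$, and therefore $\Fa$, are all fixed components of $[I_{\bar m\X}]_{3\bar m-1}$.

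Removing $\Fa$ now gives $\dim[I_{\bar m\X}]_{3\bar m-1}=\dim[I_{\bar m\X-\Fa}]_{3\bar m-4}=\dim[I_{(\bar m-1)\X}]_{3(\bar m-1)-1}$, because $\Fa$ has degree $3$ and meets each point of $\X$ with multiplicity $1$; the right-hand side vanishes --- by minimality of $\bar m$ when $\bar m>1$, and for degree reasons when $\bar m=1$ --- contradicting the left-hand side being nonzero. Thus $[I_{m\X}]_{3m-1}=\{0\}$ for all $m$, the initial degree of $m\X$ equals $3m$, and Lemma~\ref{L:20210722-1} yields $\widehat\alpha(I_\X)=3$. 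One can instead dispose of $\bar m=1$ directly: a conic through the five collinear points $P_1,\dots,P_5$ must contain $\La_1$, and its residual line cannot pass through both the $Q_i$ and the $R_j$, since these lie on two further distinct lines.

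The main obstacle will be the third application of Lemma~\ref{L:20210429-201}: $\La_3$ carries only two points, so it survives as a fixed component of $[I_{\bar m\X}]_{3\bar m-1}$ only once $\La_1$ and $\La_2$ have absorbed enough of the degree $3\bar m-1$, i.e. only when $\mu_1+\mu_2\ge\bar m+1$. Type $(2,4,5)$ sits exactly on the boundary where this is true; the corresponding inequality already fails for type $(2,3,6)$, where one must instead use the multiplicity-$3$ curve $3\La_1+3\La_2+3\La_3$ together with a fixed-component lemma in the spirit of Lemma~\ref{Giuseppe} (compare Theorem~\ref{M20210730-236}). For uniformity with that proof one could run the same alternative here: take $\Fa=3\La_1+3\La_2+3\La_3$ of degree $9$, prove that $am\La_1+bm\La_2$ is a fixed component of $[I_{3m\X}]_{9m-1}$ for suitable $a,b$, strip it off, and kill the resulting fat-point scheme on $\La_1\cup\La_2\cup\La_3$ by induction on $m$; either way one gets $\widehat\alpha(I_\X)=3$.
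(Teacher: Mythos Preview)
Your proof is correct and follows essentially the same route as the paper: the three-step method of Section~\ref{M20210723-1} with $\Fa$ a multiple of $\La_1+\La_2+\La_3$, successive applications of Lemma~\ref{L:20210429-201} to $\La_1,\La_2,\La_3$, and descent via minimality of $\bar m$.

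The only difference is cosmetic. The paper takes $\Fa=2\La_1+2\La_2+2\La_3$ (so $\mu=2$, $d=6$) and shows $[I_{2m\X}]_{6m-1}=\{0\}$; the resulting ceiling bounds $\lceil(4\bar m+1)/4\rceil\ge\bar m+1$, $\lceil(3\bar m+2)/3\rceil\ge\bar m+1$, $\lceil 3/1\rceil=3$ are each $\ge 2$ on the nose, so $\Fa$ peels off immediately. You take the reduced curve $\Fa=\La_1+\La_2+\La_3$ (so $\mu=1$, $d=3$); the price is that the final inequality $\mu_1+\mu_2\ge\bar m+1$ is not quite automatic and needs the short residue check you allude to (it holds with equality when $\bar m$ is odd). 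Your version is marginally cleaner conceptually, the paper's is marginally cleaner arithmetically; they are the same proof.
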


\begin{proof}
Let $\Fa$ be the following curve of degree $6$, which contains each point of   $\X$ with  multiplicity $2$,
$$\Fa = 2 \La_1+2\La_2+2\La_3.
$$
Hence, for $m >0$,
$$
m \Fa\in [I_{2m \X}]_{6m }.
$$
Now, as usual, we have to
 prove that for each $m >0$,
$
\dim  [I_{2m \X}]_{6m-1 }=0.
$
It is true for $m=1$, assume $m>1$.
Assume that for some $m$, $[I_{2m \X}]_{6m-1 }\neq \{0\}$, and let
$$\bar m = \min 
\{ m | \dim  [I_{2\bar m \X}]_{6\bar m-1} \} \neq 0.
$$
By Lemma \ref {L:20210429-201},  
$\La_1$ is a fixed component of multiplicity at least 
$
\left\lceil \frac{10 \bar m-6\bar m+1}{4} \right\rceil
\ge \bar m+1.
$
Hence
$$
\dim  [I_{2 \bar m \X}]_{6\bar m-1 }=\dim  [I_{2\bar m \X-(\bar m+1)\La_1}]_{5\bar m-2 }.
$$
If this dimension is not zero, we get that  $\La_2$ is a fixed component of multiplicity at least 
$
\left\lceil \frac{8 \bar m-5\bar m+2}{3} \right\rceil
\ge \bar m+1
$.
Hence
$$
\dim  [I_{2\bar m \X}]_{6\bar m-1 }=\dim  [I_{2\bar m \X-(\bar m+1)\La_1-(\bar m+1)\La_2}]_{4\bar m-3 }.
$$
If this dimension is not zero, we get that  $\La_3$ is a fixed component of multiplicity at least 
$
\left\lceil \frac{4\bar m-4\bar m+3}{1} \right\rceil =3.
$
It follows that $\Fa$ is a fixed component. Hence, we get a contradiction since
$$
\dim  [I_{2\bar m \X}]_{6\bar m-1 }=\dim  [I_{2\bar m \X-\Fa}]_{6\bar m-1 -6}
=\dim  [I_{2(\bar m-1) \X}]_{6(\bar m-1) -1},
$$
which is zero by the definition of $\bar m$.
\end{proof}

\begin{thm} \label{M20210730-2bc}
Let $\X$ be a { standard} $\k$-configuration  of type $(2,b,c)$. 

\begin {itemize}
\item [(i)] If $b=3$ and $c \geq 6$, then $\widehat\alpha (I_\X)=3;$
\item [(ii)] if $b \geq 4$, then $\widehat\alpha (I_\X)=3.$
\end {itemize}
\end{thm}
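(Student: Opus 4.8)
The plan is to follow the now-familiar scheme of this paper: produce the degree-$3$ curve $\Fa=\La_1+\La_2+\La_3$ to obtain the upper bound $\widehat\alpha(I_\X)\le 3$, and then get the matching lower bound by restricting $\X$ to a sub-configuration whose Waldschmidt constant has already been computed to be $3$.

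First I would observe that in a standard $\k$-configuration of type $(2,b,c)$ the points $P_i$ lie on $\La_1$, the $Q_i$ on $\La_2$, and $R_1,R_2$ on $\La_3$, and no point of $\X$ lies on two of these three lines; hence $\Fa=\La_1+\La_2+\La_3$ is a curve of degree $3$ passing through every point of $\X$ with multiplicity exactly $1$. Thus $m\Fa\in[I_{m\X}]_{3m}$ for all $m>0$, so $\alpha(I_{m\X})\le 3m$ and therefore $\widehat\alpha(I_\X)\le 3$. This step is identical in cases (i) and (ii).

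For the lower bound I would invoke the monotonicity $\widehat\alpha(I_{\X'})\le\widehat\alpha(I_\X)$ whenever $\X'\subseteq\X$. In case (i), since $c\ge 6$ the subset $\X'=\{R_1,R_2,Q_1,Q_2,Q_3,P_1,\dots,P_6\}\subseteq\X$ is, reading off the integer coordinates of Definition \ref{standardk}, precisely a standard $\k$-configuration of type $(2,3,6)$, so Theorem \ref{M20210730-236} gives $\widehat\alpha(I_{\X'})=3$ and hence $\widehat\alpha(I_\X)\ge 3$. In case (ii), $b\ge 4$ forces $c>b\ge 4$, i.e.\ $c\ge 5$, so the subset $\X'=\{R_1,R_2,Q_1,\dots,Q_4,P_1,\dots,P_5\}\subseteq\X$ is precisely a standard $\k$-configuration of type $(2,4,5)$, and Theorem \ref{M20210730-245} gives $\widehat\alpha(I_{\X'})=3$, hence $\widehat\alpha(I_\X)\ge 3$. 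In each case, combining the two bounds yields $\widehat\alpha(I_\X)=3$.

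There is no serious obstacle in this theorem: all the real work is already packaged in the base cases, Theorems \ref{M20210730-236} and \ref{M20210730-245}, proved by the three-step method of Section \ref{M20210723-1} (using Lemma \ref{Giuseppe} to clear $[I_{3m\X}]_{3m-1}$ in the $(2,3,6)$ case). The only point requiring a moment's care is checking that the chosen point subsets are literally standard $\k$-configurations of the smaller types, which is immediate from the explicit coordinates in Definition \ref{standardk}. One could alternatively run the full three-step argument directly on $(2,b,c)$ with $\Fa=\La_1+\La_2+\La_3$ together with an analogue of Lemma \ref{Giuseppe}, but the sub-configuration route is much shorter and is the one I would write up.
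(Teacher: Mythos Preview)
Your proposal is correct and follows essentially the same approach as the paper: the upper bound via $\Fa=\La_1+\La_2+\La_3$ and the lower bound by monotonicity from the sub-configurations of types $(2,3,6)$ and $(2,4,5)$, invoking Theorems \ref{M20210730-236} and \ref{M20210730-245} respectively. Your write-up is in fact slightly more explicit than the paper's in naming the points of the sub-configurations.
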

\begin {proof} 

Let $\Fa=  \La_1+\La_2+\La_3$. Since  $m\Fa \in [I_{m \X}]_{3m },$ then in both cases, 
$\widehat\alpha (I_\X)\leq 3.$

Now let $\X$ be a  standard $\k$-configuration of type $(2,3,c)$, with $c \geq 6$. Then 
there exists a  standard $\k$-configuration $\X'$ of type $(2,3,6)$, with $\X' \subseteq \X$ .
Since, by Theorem \ref {M20210730-236}, the Waldschmidt constant of $\X'$ is $3$,  then 
$\widehat\alpha (I_\X)\geq 3$, and (i) is proved.

For (ii), since $b \geq 4$, then 
there exists a  standard $\k$-configuration $\X' $ of type $(2,4,5)$, with $\X' \subseteq \X$. Since,
by Theorem \ref {M20210730-245}, the Waldschmidt constant of $\X'$ is $3$, hence 
$\widehat\alpha (I_\X)\geq 3$, and (ii) is proved.
\end{proof}

\begin {rem} \label {rem235} From the previous results we know the Waldschmidt constant of any standard $\k$-configuration of type $(2,b,c)$, except for $\X$ of type  $(2,3,5)$.
 For the case $(2,3,5)$, we found by Macaulay 2 \cite{M2} a curve $\Fa$ of degree $71$ with multiplicity exactly $24$ at each point of $\X$. 
The components of  $\Fa$  are  lines, one irreducible conic and an irreducible rational septic.
This implies  $\widehat\alpha(I_{\X})\le \frac{71}{24}<3$. 
 Moreover, since a $\k$-configuration of type $(2,3,4)$ is a subset of $\X$, this  give  $\frac{17}{6}$ as a lower bound (see Theorem \ref{M20210728-234}). Hence   $\frac{17}{6}\leq \widehat\alpha(I_{\X})\le \frac{71}{24}$). 

\end {rem} 

Finally, we deal with the $\k$-configurations  of type $(a,b,c)$ when $a \geq 3$.

\begin{thm} \label{M20210730-abc}
Let $\X$ be a  standard $\k$-configuration  of type $(a,b,c)$, whith $a \geq 3$. Then the Waldschmidt constant of $\X$ is
$$
\widehat\alpha (I_\X)=3.
$$
\end{thm}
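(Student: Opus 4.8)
The statement to prove is that a standard $\k$-configuration $\X$ of type $(a,b,c)$ with $a\ge 3$ has $\widehat\alpha(I_\X)=3$. The natural strategy is the sandwich argument already used for the cases $(2,b,c)$ in Theorems~\ref{M20210730-2bc} and \ref{M20210730-236}--\ref{M20210730-245}: produce a curve giving the upper bound $\widehat\alpha(I_\X)\le 3$, and find a suitable subconfiguration whose Waldschmidt constant is known to be $3$, forcing $\widehat\alpha(I_\X)\ge 3$.

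**Upper bound.** For the upper bound I would take $\Fa=\La_1+\La_2+\La_3$, the union of the three lines supporting the three levels of the configuration. This curve has degree $3$ and passes through every point of $\X$ with multiplicity exactly $1$ (the lines are distinct and, by the $\k$-configuration axioms and the explicit coordinates of Definition~\ref{standardk}, no point of $\X$ lies on two of them). Hence $m\Fa\in[I_{m\X}]_{3m}$ for every $m>0$, so $\alpha(I_\X^{(m)})\le 3m$ and therefore $\widehat\alpha(I_\X)\le 3$.

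**Lower bound.** For the lower bound, the key observation is that since $a\ge 3$, hence $b\ge 4$ and $c\ge 5$, the configuration $\X$ contains a standard $\k$-configuration $\X'$ of type $(3,4,5)$ (take the first three points on $\La_1$, the first four on $\La_2$, and the first five on $\La_3$; monotonicity of the counts is exactly what $a\ge 3$ guarantees). Since $\X'\subseteq\X$ implies $I_{\X'}\supseteq I_\X$ at every symbolic power and hence $\widehat\alpha(I_{\X'})\le\widehat\alpha(I_\X)$, it suffices to show $\widehat\alpha(I_{\X'})=3$ for the type $(3,4,5)$. This I would establish by the same \emph{Method} of Section~\ref{M20210723-1}: take $\Fa'=2\La_1+2\La_2+2\La_3$ of degree $6$ with multiplicity exactly $2$ at each of the $12$ points, so $m\Fa'\in[I_{2m\X'}]_{6m}$, and then prove $[I_{2m\X'}]_{6m-1}=\{0\}$ for all $m\ge 1$ by the now-standard contradiction: if it were nonzero, Lemma~\ref{L:20210429-201} forces $\La_1$ to be a fixed component of multiplicity $\ge\lceil(15m-6m+1)/4\rceil\ge m+1$, then (after removing it) $\La_2$ of multiplicity $\ge\lceil(12m-5m+2)/3\rceil\ge m+1$, then $\La_3$ of multiplicity $\ge\lceil(10m-4m+3)/2\rceil\ge m+1$; removing $\Fa'$ returns us to $[I_{2(\bar m-1)\X'}]_{6(\bar m-1)-1}$, contradicting minimality of $\bar m$ (the base case $m=1$ being a trivial Hilbert-function check). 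Then Lemma~\ref{L:20210722-1} gives $\widehat\alpha(I_{\X'})=6/2=3$. Combining, $3=\widehat\alpha(I_{\X'})\le\widehat\alpha(I_\X)\le 3$.

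**Main obstacle.** The only genuine subtlety is verifying that a standard $\k$-configuration of type $(3,4,5)$ really sits inside every standard $\k$-configuration of type $(a,b,c)$ with $a\ge 3$ in a way compatible with Definition~\ref{standardk} — i.e.\ that truncating each level to its first few points again yields a standard $\k$-configuration (the lines are the same, only fewer points are kept), and that the inequality $3<4<5$ is automatic from $3\le a<b<c$. This is routine but must be checked, since the lower-bound argument rests on it. The rest is a mechanical instance of the paper's established machinery; there is no new geometric input (no irreducible curves of higher degree are needed here, unlike the delicate type-$(2,3,5)$ case), so I expect no real difficulty beyond bookkeeping of ceilings.
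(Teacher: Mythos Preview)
Your argument is correct in outline, but it is considerably more circuitous than the paper's. The paper's proof is a single line: since $a\ge 3$ and the configuration has $s=3$ levels, the hypothesis $d_1\ge s$ of Corollary~\ref{M20210802-1} is satisfied, and that corollary gives $\widehat\alpha(I_\X)=s=3$ immediately for $\X$ itself. No subconfiguration, no sandwich, no new ceiling computations are needed.

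What you do instead is pass to the subconfiguration $\X'$ of type $(3,4,5)$ and then essentially reprove the case $s=3$, $(d_1,d_2,d_3)=(3,4,5)$ of Lemma~\ref{L:20210529-205}/Corollary~\ref{M20210802-1} from scratch. That is valid, but redundant: the general result you need is already in the paper and applies directly to $\X$, not merely to $\X'$. Your upper bound via $\Fa=\La_1+\La_2+\La_3$ is exactly the curve used in Lemma~\ref{L:20210529-205}, and your fixed-component reduction for $\X'$ is the same Bezout peeling carried out there in full generality.

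Two minor slips worth fixing. First, your line labels are inverted relative to the paper's Section~\ref{Sabc} convention: there $\La_1$ carries the $c$ points and $\La_3$ the $a$ points, so ``five points on $\La_3$'' would require $a\ge 5$; you mean three on $\La_3$, four on $\La_2$, five on $\La_1$. Second, the numerators in your ceiling estimates ($15m$, $12m$, $10m$) do not match the multiplicity $2m$ you announced; with $\Fa'=2\La_1+2\La_2+2\La_3$ and $[I_{2m\X'}]_{6m-1}$ the correct numerators are $10m$, $8m$, $6m$. The conclusions $\ge m+1$ survive, so the error is cosmetic, but it suggests you were toggling between $\mu=2$ and $\mu=3$ mid-computation. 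In any case, simply citing Corollary~\ref{M20210802-1} obviates all of this.
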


\begin{proof}
It follows immediately from Corollary~\ref{M20210802-1}.
\end{proof}

\begin{rem}\label{R:20230725} We  recall the Chudnovsky’s Conjecture:

{\it Let $\X$ be a finite set of distinct points in $\P^n$. Then,  for all $m>0$, 
$$
\frac{ \alpha(I^{(m)}_\X)}{m}\ge \frac{\alpha(I_\X)+n-1}{n}. 
$$}
This conjecture was proved  in $\P^2$ by Chudnovsky (see, for instance  \cite[Proposition 3.1]{HH}).
As an application, we wish to show that the Chudnovsky’s conjecture is verified by a standard $\k$-configurations in $\P^2$ of type $(a,b,c)$. 

Let $\X$ and $\Y$ be  standard $\k$-configurations in $\P^2$ of type $(a,b,c)$, and  $(b,c)$, respectively.  We know that 
$\alpha(I_{\X})= 3$ , and from the proof of Lemma \ref{L:20210529-205}, recalling that $b >1$, we get that $\alpha(I^{(m)}_{\Y})= 2m$.
Moreover, since  the scheme $m\X \supset m\Y$, then $\alpha(I^{(m)}_{\X})  \geq \alpha(I^{(m)}_{\Y})$. 
It follows that,  for all $m>0$,
$$
\frac{ \alpha(I^{(m)}_\X)}{m}\ge \frac{ \alpha(I^{(m)}_\Y)}{m}=2=  \frac{3+2-1}{2}=\frac{\alpha(I_\X)+n-1}{n}.
$$
\end{rem}

\section{Appendix.}\label{appendice}

We recall the notation for the proofs of theorems about  standard $\k$-configurations of type $(1,b,c)$, summarized in Figure \ref{figuragenerale2}.

\begin{figure}[ht] 
\centering
\vskip -5.5cm
\psset{xunit=1cm,yunit=1cm,algebraic=true,dimen=middle,dotstyle=o,dotsize=7pt 0,linewidth=.5pt,arrowsize=3pt 2,arrowinset=0.25}
\begin{pspicture*}(-10,-8.8)(5,8.8)

\psline[linewidth=1pt](-6.7,1)(1.7,1)

\psline[linewidth=1pt](-6.7,2)(0.7,2)

\psline[linewidth=1pt](-6,3.5)(-6,0.65)

\psline[linewidth=1pt](-6.23,3.46)(-4.81,0.65)

\psline[linewidth=1pt](-6.41,3.37)(-3.65,0.66)

\psline[linewidth=1pt](-6.41,3.25)(-2.45,0.66)

\psline[linewidth=1pt](-6.51,3.2)(-1.25,0.66)

\psline[linewidth=1pt](-6.51,3.1)(1.25,1.66)

\rput[tl](-0.65,2.17){\Huge $\cdots$}
\rput[tl](0.0,1.17){\Huge$\cdots$}

\rput[tl](-7.15,1.1){\tiny$\La_1$}
\rput[tl](-7.15,2.1){\tiny$\La_2$}

\rput[tl](-6.40,2.9){\tiny$R$}
\rput[tl](-6.40,2.3){\tiny$Q_1$}
\rput[tl](-5.48,2.3){\tiny$Q_2$}
\rput[tl](-0.00,2.3){\tiny$Q_b$}

\rput[tl](-6.40,1.3){\tiny$P_1$}
\rput[tl](-5.48,1.3){\tiny$P_2$}
\rput[tl](-4.55,1.3){\tiny$P_3$}
\rput[tl](-3.65,1.3){\tiny$P_4$}
\rput[tl](0.7,1.3){\tiny$P_c$}

\rput[tl](-6.27,0.6){\tiny$\Ma_1$}
\rput[tl](-4.9,0.6){\tiny$\Na_1$}
\rput[tl](-3.7,0.6){\tiny$\Ma_2$}
\rput[tl](-2.6,0.6){\tiny$\Na_2$}
\rput[tl](-1.5,0.6){\tiny$\Ma_3$}
\rput[tl](1.3,1.7){\tiny$\Ta_i$}

\begin{scriptsize}
\psdots[dotstyle=*](-6,3)
\psdots[dotstyle=*](-6,2)
\psdots[dotstyle=*](-5,2)
\psdots[dotstyle=*](-4,2)
\psdots[dotstyle=*](-3,2)
\psdots[dotstyle=*](-6,1)
\psdots[dotstyle=*](-5,1)
\psdots[dotstyle=*](-4,1)
\psdots[dotstyle=*](-3,1)
\psdots[dotstyle=*](-2,1)
\end{scriptsize}
\end{pspicture*}
\vskip -9.4cm
\caption{A standard $\k$-configuration of type $(1,b,c)$}\label{figuragenerale2}
\end{figure} 

We denote by  
$$
\begin{array}{llllllllllllllllll}
\La_1 & \text{be the line through  $P_1,P_2,\dots,P_{c}$;} \\
\La_2 & \text{be the line through  $Q_1,Q_2,\dots,Q_{b}$;} \\
\Ma_1 & \text{be the line through  $P_1,Q_1,R$;} \\
\Ma_2 & \text{be the line through  $P_3,Q_2,R$;} \\
   & \hskip 2 cm \vdots \\
  \Ma_i& \text{be the line through  $P_{2i-1},Q_i,R$,
       for $i\leq b$ and $2i\leq c+1$;}\\
\Na_1 & \text{be the line through  $P_2,R$;} \\
\Na_2 & \text{be the line through  $P_4,R$;} \\
      & \hskip 2 cm \vdots \\
 \Na_i  & \text{be the line through  $P_{2i},R$, \ for  $2i\leq c$;}\\

 \Ta_i &  \text{be the line through  $Q_{i},R$,  for $i\leq b$ and $2i\geq c+2$.}\\
\end{array}
$$
\begin{proof} [Proof of Theorem \ref{T:20210606-402}] 
Let $\X$ be a { standard} $\k$-configuration  of type $(1,b,b+1)$. If $b \geq4$ is an even  integer, we  show  that
$$
\widehat\alpha(I_\X)=\frac{9b-4}{3b}.
$$

Let
$$
\Fa=\frac{3b-2}{2}\La_1 +\frac{3b-2}{2}\La_2 +\Ma_1+\cdots+\Ma_{\frac{b}{2}+1}+\Na_1+\cdots+\Na_{\frac{b}{2}}+\Ta_{\frac{b}{2}+2}+  \cdots+\Ta_{b},
$$
so $m\Fa$ is a curve in the linear system $[I_{\frac{3b}{2}m \X}]_{\frac{9b-4}{2}m }$.
Now we need to prove that, for each $m>0$,  $\dim [I_{\frac{3b}{2}m \X}]_{\frac{9b-4}{2}m -1 }=0$.

By    Lemma \ref {L:20210429-201}, if $\dim [I_{\frac{3b}{2}m \X}]_{\frac{9b-4}{2}m -1 }\neq \{0\}$, then $\La_1$ is a fixed component of multiplicity at least 
\begin{equation}\label{M-b+1-1} 
\left\lceil \frac{\frac{3b}{2}m (b+1)-\frac{9b-4}{2}m +1}{b}\right\rceil=
\left\lceil \frac{(3b^2-6b+4)m +2}{2b}\right\rceil \ge {\frac{3b-6}{2} m} ,
\end{equation}
for the plane curves of the linear system
$[I_{\frac{3b}{2}m \X}]_{\frac{9b-4}{2}m -1 }$.

If we remove $\frac{3b-6}{2} m \La_1$, we get that $\La_2$ is a fixed component of multiplicity at least

\begin{equation}\label{M-b+1-2} 
\left\lceil \frac{\frac{3b}{2} m b- ( 3b+1 )m +1}{b-1}\right\rceil=
\left\lceil \frac{(3b^2-6b-2)m +2}{2(b-1)}\right\rceil \ge {\frac{3b-6}{2}m} .
\end{equation}
By removing $\frac{3b-6}{2} m \La_2$, we have that $\La_1$ is a fixed component of multiplicity at least 
\begin{equation}\label{M-b+1-3} 
\left\lceil \frac{3m (b+1)- \frac{3b+8}{2}m +1}{b}\right\rceil=
m + \left\lceil \frac{(b-2)m +2}{2b}\right\rceil.
\end{equation}
After removing $m\La_1$,  then $\La_2$ is a fixed component of multiplicity at least 
\begin{equation}\label{M-b+1-4} 
\left\lceil \frac{3bm- \frac{3b+6}{2}m +1}{b-1}\right\rceil=
m + \left\lceil \frac{(b-4)m +2}{2b-2}\right\rceil.
\end{equation}
Remove $\ m \La_2$.  The  residual scheme is

$$\Y= \X- \Big (\frac{3b-6}{2} m +m\Big)\La_1- \Big (\frac{3b-6}{2} m+m\Big ) \La_2 
= \frac{3b}{2} m R + \sum_i 2m P_i +\sum_i 2m Q_i,
$$
and  
$$\dim [I_{\frac{3b}{2}m \X}]_{\frac{9b-4}{2}m -1 }=
\dim [I_{\Y}]_{\frac{3b+4}{2}m -1 }.
$$
Now, by Bezout's Theorem,  the lines $\Ma_i$,  $\Na_i$, and $\Ta_i$ are fixed components.

Set 
\begin {equation} \label {barm-per-b+1}  \bar m =\min \{ m \ | \ [I_{\frac{3b}{2}m \X}]_{\frac{9b-4}{2}m -1 }\neq\{0\} \}.
\end {equation}
First observe that $\bar m \neq 1$, in fact for $m=1$,
by \eqref{M-b+1-1}, \eqref{M-b+1-2}, \eqref{M-b+1-3},  \eqref{M-b+1-4}, and
 using also the ceiling parts, we get that $\Fa$ is a curve of the linear system
$[I_{\frac{3b}{2} \X}]_{\frac{9b-4}{2} -1 }$, but $\Fa$ has degree $ \frac{9b-4}{2}$, a contradiction.

So $\bar m>1$. By by \eqref{M-b+1-1}, \eqref{M-b+1-2}, \eqref{M-b+1-3},  \eqref{M-b+1-4}, we get that
$\Fa$ is a fixed component for the linear system
$\big[I_{\frac{3b}{2} \bar m\X}\big]_{\frac{9b-4}{2}\bar m -1 }$, hence, by recalling that 
$\deg \Fa=\frac{9b-4}{2}$ and $\Fa$  contains each point of   $\X$ with  multiplicity $\frac{3b}{2}$, we get
$$\dim \big[I_{\frac{3b}{2} \bar m\X}\big]_{\frac{9b-4}{2}\bar m -1 }=
\dim \big[I_{\frac{3b}{2} \bar m\X- \Fa}\big]_{\frac{9b-4}{2}\bar m -1- \frac {9b -4}{2}}=
\dim \big[I_{\frac{3b}{2} (\bar m-1)\X}\big]_{\frac{9b-4}{2}(\bar m -1)-1},
$$
which is zero by \eqref {barm-per-b+1}, a contradiction.
\end{proof}

\begin{proof} [Proof of Theorem \ref{T:20210717}] 
Let $\X$ be a standard $\k$-configuration  of type $(1,b,2b-1)$. We show that
$$
\widehat \alpha(I_\X)=\frac{6b^2-8b+1}{2b^2-2b}.
$$
Let
$$
\begin{array}{lllllllllllllllll}
\Ca_i & \text{be the  irreducible  curve of degree $(b-1)$ through $P_2,P_4,\dots, P_{2b-2},Q_1,\dots,\widehat Q_i,\dots, Q_b, (b-2)R$} \\
& \text{for $1\le i\le b$ (see Lemma~\ref{T:20210527-109}),} 
\end{array} 
$$
and let  $\Fa$ be the following curve of degree $6b^2-8b+1$  with multiplicity $2b^2-2b$ at each point of $\X$. 
$$
\Fa=(2b^2-3b)\La_1+(2b^2-4b+1)\La_2+b\Ma_1+\cdots+b\Ma_{b}+\Ca_1+\cdots+\Ca_{b}.
$$
 Hence for $m>0$
$$
m\Fa\in [I_{(2b^2-2b)m\X}]_{(6b^2-8b+1)m}. 
$$
We  should now prove that for $m >0$,
$$
 [I_{(2b^2-2b)m\X}]_{(6b^2-8b+1)m-1}= \{0\}.
$$
Since the proof is analogous to the one of Theorem \ref{M2b-2}, assuming that the ideals which we will consider are different from zero, 
we just show  the computation that,  from Lemma \ref {L:20210429-201}, gives how many times each component of $\Fa$ is a fixed component for the curves of the linear system
$[I_{(2b^2-2b)m\X}]_{(6b^2-8b+1)m-1}$.

We get that  $\La_1$ is fixed component  of multiplicity at least 
\begin{equation}\label{M2b-1-1} 
 \left\lceil \frac{(2b-1)(2b^2-2b)m-(6b^2-8b+1)m+1}{2b-2} \right\rceil 
\geq  (2b^2-4b+1)m.
\end{equation}
By removing $(2b^2-4b+1)m\La_1$, we get that $\La_2$  is fixed component  of multiplicity at least 
\begin{equation}\label{M2b-1-2} 
 \left\lceil \frac{b(2b^2-2b)m-(4b^2-4b)m+1}{b-1} \right\rceil 
= (2b^2-4b)m +\left\lceil \frac{1}{b-1} \right\rceil.
\end{equation}
By removing $(2b^2-4b)m\La_2$,
we find
 that  $\La_1$ is fixed component  of multiplicity at least 
\begin{equation}\label{M2b-1-3} 
 \left\lceil \frac{(2b-1)^2m-2b^2m+1 }{2b-2} \right\rceil 
=  (b-2)m +  \left\lceil \frac{(2b-3)m+1}{2b-2}\right\rceil .
\end{equation}
Now we remove $(b-2)m\La_1$ and we find
 that  each $\Ma_i$ is fixed component  of multiplicity at least 
\begin{equation}\label{M2b-1-4} 
 \left\lceil \frac{(2b^2-2b)m+2bm+(b+1)m-(2b^2-b+2)m+1 }{2} \right\rceil 
=  (b-1)m+\left\lceil \frac{m+1}{2} \right\rceil .
\end{equation}
So, after we remove $((2b^2-4b+1)+(b-2))m\La_1 +(2b^2-4b)m\La_2+
\sum _{i=1}^b(b-1)m \Ma_i$, the residual scheme is 
$$\Y=(b^2-b)R + \sum _{i=1}^b (b+1)Q_i + \sum_{\text {for $i $ odd}}2mP_i
+ \sum_{\text {for $i $ even }}(b+1)mP_i ,
$$
and the degree we have to consider is
$((6b^2-8b+1)-(2b^2-4b+1)-(b-2)-(2b^2-4b)-b(b-1) )m-1=(b^2+2)m-1,
$
thus
$$ \dim [I_{(2b^2-2b)m\X}]_{(6b^2-8b+1)m-1}=
\dim [I_{\Y}]_{(b^2+2)m-1}.
$$
Now if $\Ha$ is a curve of the linear system $[I_{\Y}]_{(b^2+2)m-1}$,
the multiplicity of intersection between each $\Ca_i$ and $\Ha$ is at least 
$$|\Ca_i \cdot \Ha | \geq 
(b-2)(b^2-b)m  
+(b+1) (b-1)m+(b+1) (b-1)m=
(b^3-b^2+2b-2)m,
$$
and this number is bigger than the product of the degree of $\Ca_i$ and $\Ha$, which is
$$\deg \Ca_i \cdot \deg \Ha=(b-1)((b^2+2)m-1)=(b^3 -b^2+2b-2)m -(b-1). $$
Hence, by B\'ezout's Theorem, each curve $\Ca_i$ is a fixed component for the curves of  $[I_{\Y}]_{(b^2+2)m-1}.$

Now let 
\begin {equation} \label {barm-per-2b-1} 
 \bar m =\min \{ m |\ [I_{(2b^2-2b)m\X}]_{(6b^2-8b+1)m-1}\neq\{0\} \}.
\end {equation}
We have $\bar m>1$, in fact for $m=1$,
 by \eqref{M2b-1-1}, \eqref{M2b-1-2}, \eqref{M2b-1-3},  \eqref{M2b-1-4}, and
 using also the ceiling parts, we get that $\Fa$ should be a curve in the linear system
$[I_{(2b^2-2b)\X}]_{6b^2-8b}$, but $\Fa$ has degree $ 6b^2-8b+1$, a contradiction.

Hence $\bar m>1$. 

By  the above computation, 
then
$\Fa$ is a fixed component for the linear system
$ [I_{(2b^2-2b)m\X}]_{(6b^2-8b+1)m-1}.$
We have
$$
\begin{array}{llllllllllllll}
\dim   [I_{(2b^2-2b)\bar m\X}]_{(6b^2-8b+1)\bar m-1}&= &
 \dim  [I_{(2b^2-2b)\bar m\X-\Fa}]_{(6b^2-8b+1)\bar m-1-(6b^2-8b+1)} \\
&= &
 \dim  [I_{(2b^2-2b)(\bar m-1)\X}]_{(6b^2-8b+1)(\bar m-1)-1}
\end{array}
$$
which is zero by \eqref {barm-per-2b-1}, a contradiction.
\end{proof}

\begin{proof} [Proof of Theorem \ref{T:20210428-203}] 
Let $\X$ be a  standard $\k$-configuration  of type $(1,b,2b)$. We show that
$$
\widehat\alpha (I_\X)=\frac{6b-5}{2b-1}.
$$
Let
$$
\begin{array}{lllllllllllllllll}
\Ca& \text{be the irreducible curve of degree $b$ through $P_2,P_4,\dots, P_{2b},Q_1,\dots, Q_b, (b-1)R$} \\
& \text{  (see Lemma~\ref{T:20210527-109}),} 
\end{array} 
$$
and let $\Fa$ be the following curve of degree $(6b-5)$  with multiplicity exactly  $(2b-1)$ at the points of~$\X$, 
$$
\Fa=(2b-2)\La_1+(2b-3)\La_2+\Ma_1+\cdots+\Ma_b+\Ca.
$$
Hence, for $m>0$, 
$
m  \Fa\in [I_{m (2b-1)\X}]_{m (6b-5)}.
$
Now we will show that for each $m>0$ we have
$$
  [I_{m (2b-1)\X}]_{m (6b-5)-1}=\{0\},
$$
and the conclusion will  follow from Lemma \ref {L:20210722-1}.

Assume that $ [I_{m (2b-1)\X}]_{m (6b-5)-1}\neq\{0\}$ for some $m>0$.

Let $\Ha$ be a curve of the linear system $[I_{m(2b-1)\X}]_{m(6b-5)-1}$. Then
the multiplicity of intersection between $\Ca$ and $\Ha$ is at least $(2b-1)m$ in each of the points $P_i$ and $Q_i$ and at least $(b-1)(2b-1)m$ in $R$. Since we have $2b$ points $P_i$ and $Q_i$,
$$|\Ca \cdot \Ha | \geq 2b(2b-1)m+(b-1)(2b-1)m,$$
and this number is bigger than the product of the degree of $\Ca$ and $\Ha$, which is
$b (m (6b-5)-1)$, in fact
$$2b(2b-1)m+(b-1)(2b-1)m - b (m (6b-5)-1)=
m+b >0.
$$
Hence, by B\'ezout's Theorem, the curve $\Ca$ is a fixed component for the curves of  $[I_{m(2b-1)\X}]_{m(6b-5)-1}$.

Moreover, for the curves of  this linear system, by Lemma \ref {L:20210429-201},  $\Ma_i$, ($1\le i \le b$),  is a fixed component of multiplicity at least 
$$
\left\lceil \frac{3(2b-1)m -(6b-5)m +1}{2} \right\rceil
=\left\lceil \frac{2m +1}{2} \right\rceil = m+1,
$$
and  $\La_1$ is a fixed component of multiplicity at least 
$$
 \left\lceil \frac{2b(2b-1)m -(6b-5)m +1}{2b-1} \right\rceil
 =   (2b-3)m +  \left\lceil \frac{2m+1}{2b-1} \right\rceil.
$$
If we remove the curve $(2b-3)m\La_1$ we get
$$\dim [I_{m(2b-1)\X}]_{m(6b-5)-1}=
\dim [I_{m(2b-1)\X -  (2b-3)m\La_1})]_{(4b-2)m-1}.
$$
If this dimension is different from zero, by Lemma \ref {L:20210429-201},  we get that $\La_2$ is a fixed component of multiplicity at least 
$$
 \left\lceil \frac{(2b-1)m\cdot b-(4b-2)m+1}{b-1} \right\rceil
=   (2b-4)m+ \left\lceil\frac{(b-2)m+1}{b-1} \right\rceil 
$$ 
for the curves of  $[I_{m(2b-1)\X}]_{m(6b-5)-1}$.

Now let 
\begin {equation} \label {barm-per-2b}  \bar m =\min \{ m |[I_{m (2b-1)\X}]_{m (6b-5)-1}\neq\{0\} \}.
\end {equation}
We have $\bar m>1$, in fact for $m=1$, by the computation above, the curve $\Fa$of degree $6b-5$
should be a fixed component for the linear system,  $[I_{(2b-1)\X}]_{6b-4}$,
a contradiction.

Hence $\bar m>1$. Since
$\Fa$ is a fixed component for the linear system
$[I_{m(2b-1)\X}]_{m(6b-5)-1}$
 we have
$$\dim  [I_{\bar m(2b-1)\X}]_{\bar m (6b-5)-1}= \dim [I_{\bar m (2b-1)\X-\Fa}]_{\bar m(6b-5)-1-(6b-5)} =\dim [I_{(\bar m -1)(2b-1)\X}]_{(\bar m-1)(6b-5)-1} ,
$$
which is zero by (\ref {barm-per-2b} ), a contradiction.
\end{proof}

\begin{proof} [Proof of Theorem \ref{T:20210618-203}] 
Let $\X$ be a  standard $\k$-configuration  of type $(1,b,2b+1)$. We show that
$$
\widehat\alpha (I_\X)=\frac{6b^2-2b-3}{2b^2-1}.
$$
Let
$$
\begin{array}{lllllllllllllllll}
\Ca_i & \text{be the irreducible curve of degree $b$ through  $P_2,P_4,\dots,\widehat P_{2i},\dots, P_{2b}, P_{2b+1}$ ,} \\
 & \text{$Q_1,\dots,Q_b, (b-1)R$ for $1\le i\le b$,} \\
\Ca_{b+1}& \text{be the irreducible curve  of degree $b$ through $P_2,P_4,\dots, P_{2b}, Q_1,\dots,Q_b, (b-1)R$;} 
\end{array} 
$$
(see Lemma~\ref{T:20210527-109} for the $b+1$ curves $\Ca_i$).
Note that the  curve $\Ca_1+ \cdots +\Ca_{b+1}$ has degree $b(b+1)$,  multiplicity $b+1$ at each of the points $Q_1,\dots,Q_b$,  multiplicity $b$ at each of the points $P_2,P_4,\dots, P_{2b}, P_{2b+1}$, and multiplicity $b^2-1$ at $R$.
Let
$$
\Fa=(2b^2-b-1)\La_1+(2b^2-2b-2))\La_2+b\Ma_1+\cdots+b\Ma_b+\Ca_1+\cdots+\Ca_{b+1}.
$$
Then $\Fa$ is a curve of degree $(6b^2-2b-3)$ with  multiplicity $(2b^2-1)$ at each point of $\X$. Hence for $m>0$
$$
m\Fa\in [I_{(2b^2-1)m\X}]_{(6b^2-2b-3)m}. 
$$
We now have to prove that
$$
[ I_{(2b^2-1)m\X}]_{(6b^2-2b-3)m-1}=0.
$$
Assume that for some $m >0$, 
$[ I_{(2b^2-1)m\X}]_{(6b^2-2b-3)m-1}\neq \{0\}.$

Analogously to the proof of Theorem \ref{T:20210428-203}, let $\Ha$ be a curve of the linear system $[I_{(2b^2-1)m\X}]_{(6b^2-2b-3)m-1}$.  Then
the multiplicity of intersection between each $\Ca_i$ and $\Ha$ is at least 
$(2b^2-1)m$ in each of the $2b$ points $P_i$ and $Q_i$ and at least $(b-1)(2b^2-1)m$ in $R$, so, 
$$|\Ca_i \cdot \Ha | \geq 2b(2b^2-1)m+(b-1)(2b^2-1)m,$$
and this number is bigger than the product of the degree of $\Ca_i$ and $\Ha$, which is
$b ( (6b^2-2b-3)m-1)$.
Hence, by B\'ezout's Theorem, each curve $\Ca_i$ is a fixed component for the curves of  $\ [I_{(2b^2-1)m\X}]_{(6b^2-2b-3)m-1}$.

Moreover, for the curves of  this linear system, by Lemma \ref {L:20210429-201},  
each $\Ma_i$ is a fixed component of multiplicity at least 
$$
 \left\lceil \frac{3(2b^2-1) m-(6b^2-2b-3)m+1}{2} \right\rceil
= bm+1,
$$
 $\La_1$  is a fixed component of multiplicity at least 
$$
 \left\lceil \frac{(2b^2-1) (2b+1)m-(6b^2-2b-3)m+1}{2b} \right\rceil=
 \left\lceil \frac{(4b^3-4b^2+2) m+1 }{2b} \right\rceil
=  (2b^2-2b)m+\left\lceil \frac{2 m+1}{2b} \right\rceil,
$$
and, by removing $ (2b^2-2b)m\La_1$, we get that   $\La_2$ is a fixed component of multiplicity at least 
$$
 \left\lceil \frac{(2b^2-1)m\cdot b-(4b^2-3)m+1}{b-1} \right\rceil 
=  (2b^2-2b-3)m + \left\lceil \frac{1}{b-1} \right\rceil.
$$
Now let 
\begin {equation} \label {barm-per-2b+1}  \bar m =\min \{ m |[I_{m (2b^2-1)\X}]_{m (6b^2-2b-3)-1}\neq\{0\} \}.
\end {equation}
We have $\bar m>1$, in fact for $m=1$, by the computation above, the curve $\Fa'$of degree $6b^2-3b-1$,
$$\Fa'=(2b^2-2b+1)\La_1+(2b^2-2b-2 )\La_2+b\Ma_1+\cdots+b\Ma_b+\Ca_1+\cdots+\Ca_{b+1},
$$
should be a fixed component for the linear system, so
$$
\begin{array}{lllllll}
\dim [ I_{(2b^2-1)\X}]_{(6b^2-2b-3)-1} &=&
\dim [ I_{(2b^2-1)\X-\Fa'}]_{(6b^2-2b-4)- (6b^2-3b-1)     } \\
&=&
\dim [ I_{(b-2)P_1+\cdots+(b-2)P_{2b+1}}]_{(b-3)}\\
&=&0,
\end{array}
$$
a contradiction.

Hence $\bar m>1$. By  the computation above
$\Fa$ is a fixed component for  $[ I_{(2b^2-1)\bar m\X}]_{(6b^2-2b-3)\bar m-1}$,
hence we have
$$
\begin{array}{lllllll}
\dim [ I_{(2b^2-1)\bar m\X}]_{(6b^2-2b-3)\bar m-1}  &=&
\dim [ I_{(2b^2-1)\bar m\X-\Fa}]_{(6b^2-2b-3)\bar m-1- (6b^2-2b-3) } \\
&=&\dim [ I_{(2b^2-1)(\bar m-1)\X}]_{(6b^2-2b-3)(\bar m-1)-1 },
\end{array}
$$
which is zero by \eqref {barm-per-2b+1}, a contradiction.
\end{proof}


\begin{thebibliography}{10}

\bibitem{cocoa}
J.\ Abbott, A.M.\ Bigatti, L.\ Robbiano.
CoCoA: a system for doing Computations in Commutative Algebra.
Available at http://cocoa.dima.unige.it.

	
	
	
	
	
	
	



	
	\bibitem{BCG^+} 	
	C. Bocci, S. Cooper, E. Guardo, B. Harbourne, M. Janssen, U. Nagel, A. Seceleanu, A. Van Tuyl, T. Vu. The Waldschmidt constant for squarefree monomial ideals. J. Algebraic Combin.,  {\bf 44}(4), (2016).
	
	\bibitem{BH}
	C.\ Bocci,  B.\ Harbourne.
	Comparing powers and symbolic powers of ideals.
	J. Algebraic Geom. {\bf 19} (2010), no. 3, 399-417.
	
	
	
	
	
        \bibitem{CHHV} E. Carlini, T.H.  H\'a, B. Harbourne, A. Van Tuyl.
         Ideals of powers and powers of ideals.
         S.L., Springer International Publishing, P. 159. ISBN: 978-3-030-45246-9.


	
	
	\bibitem{CGS:1}
M.V. Catalisano, E. Guardo, Y.S. Shin.
\newblock The Waldschmidt constant of special $\k$-configurations in $\P^n$.
\newblock {J. Pure Appl. Algebra}, {\bf 224}(10):106341, (2020).


	

	
	\bibitem{CHT} S.M. Cooper, B. Harbourne, Z. Teitler. Combinatorial bounds on Hilbert functions of fat points in projective space. J. Pure Appl. Algebra, {\bf 215}(9), (2011), 2165-2179.
	
	\bibitem{CH} S.M. Cooper, S. Hartke. The alpha problem \& line count configurations. J. Algebra {\bf 407}, (2014), 224-245.
	
	
	
	
	\bibitem{DFMS} M. DiPasquale, C.A. Francisco, J. Mermin, J. Schweig.	 Asymptotic resurgence via integral closures. Trans. Amer. Math. Soc. {\bf 372} (2019) 6655-6676.
	

	
	\bibitem{DHNSST} M. Dumnicki, B. Harbourne, U. Nagel, A. Seceleanu, T. Szemberg, H. Tutaj-Gasinska. Resurgences for ideals of special point configurations in $\mathbb{P}^N$ coming from hyperplane arrangements. J. Algebra, {\bf 443} (2015) 383-394.
	
	\bibitem{DHST}
	M.\ Dumnicki, B.\ Harbourne, T.\ Szemberg, H.\ Tutaj-Gasi\'nska.
	Linear subspaces, symbolic powers and Nagata type conjectures.
	Adv. Math. {\bf 252} (2014), 471-491.
	


\bibitem{DHST:2}	
M. Dumnicki,  T. Szemberg, H. Tutaj-Gasi\'nska.
Symbolic powers of planar point configurations II.  J. Pure Appl. Algebra,
{\bf 220} (2016), 2001-2016. 


 
	

	
 	
 		
	
	
	

	
	\bibitem{GGSV:2} F. Galetto,  Y.S. Shin, A. Van Tuyl. Distinguishing $\k$-configuration. Illinois J. Math. {\bf 61},3,4,  (2017), 415-441.
	

	
	\bibitem{GHMN}  A.V. Geramita, B. Harbourne,  J.C. Migliore, U. Nagel. Matroid configurations and symbolic powers of their ideals. Trans. Amer. Math. Soc. {\bf 369} (2017), 7049-7066.
	
	
	\bibitem{GHS:1}
	A.V. Geramita, T.~Harima, Y.S. Shin.
	\newblock {An Alternative to the Hilbert function for
		the ideal of a finite set of points in ${\mathbb P}^n$}.
	\newblock  Illinois J. Math. {\bf 45}(1) (2001), 1-23
	
	\bibitem{GHS:2}
	A.V. Geramita, T.~Harima and Y.S. Shin. {Extremal point sets
		and Gorenstein ideals}, Adv. Math. {\bf 152} (2000), no. 1,
	78-119.
	
	\bibitem{GHS:5} A.V. Geramita, T. Harima, Y.S. Shin.
	Decompositions of the Hilbert function of a set of points in $\P^n$.
	Canad. J. Math. {\bf 53} (2001), 923-943.
	
\bibitem{GMS:1}
	A.V. Geramita, J.C. Migliore, S. Sabourin.
	On the first infinitesimal neighborhood of a linear configuration of points in $\P^2$.
	J. Algebra {\bf 298}, (2008), 563-611.
	
	\bibitem{GS:1}
	A.V. Geramita, Y.S. Shin. \emph{\em $\k$-configurations in ${\mathbb
			P}^3$ all have extremal resolutions}. J. Algebra {\bf 213} (1999),
	no. 1, 351-368.
	

	
	\bibitem{M2} D.R. Grayson, M.E. Stillman.
	{Macaulay2, a software system for research in algebraic geometry},
	{Available at \url{http://www.math.uiuc.edu/Macaulay2/}}.
	
	
	
	
	
	
	
	
		
		
	
	
	
		
		
	\bibitem{HM} H. Haghighi, M. Mosakhani. On the configurations of points in $\mathbb{P}^2$ with the Waldschmidt constant equal to two.
	J. Pure Appl. Algebra {\bf 220}(12), (2016), 3821-3825.
	
	\bibitem{HMF} 	H. Haghighi, M. Mosakhani, M.Z. Fashami.  Resurgence and Waldschmidt constant of the ideal of a fat almost collinear subscheme in $\mathbb{P}^2$. Ann. Univ. Paedagog. Crac. Stud. Math. {\bf 17} (2018), 59-65.
	
	

	

\bibitem{HH}
	B. Harbourne, C.Huneke.
	Are symbolic powers hyghly evolved?
	J. Ramanujan Math. Soc., {\bf 28A}(2013), 247-266.
	
\bibitem{H}
	T. Harima.
	Some examples of unimodal Gorenstein sequences.
	J. Pure Appl. Algebra {\bf 103} (1995), 313-324.

	

	
	
	
	
	\bibitem{MSS} G. Malara, T. Szemberg, J. Szpond.
	On a conjecture of Demailly and new bounds on Waldschmidt constants in $\P^n$.
	J. Number Theory {\bf 189} (2018), 211-219.
		
	
	\bibitem{RR}
	L.G. Roberts, M. Roitman.
	On Hilbert functions of reduced and of integral algebras.
	J. Pure Appl. Algebra {\bf 56} (1989), 85-104.
	
	\bibitem{NCT}
	N.C. Tu,
	Computation initial degree and Waldschmidt constant for sets of small number of multiple points.
	J. Sci. Technol.: Issue on ICT {\bf 21} N.6.2 (2023), 21-24.
	
	\bibitem{W} M. Waldschmidt, Propriétés arithmétiques de fonctions de plusieurs variables. II. In: Lelong, S.P. (Analyse), 1975/76, Lecture Notes in Math., vol. 578. Springer (1977).
	
	
	
	
	
	
\end{thebibliography}
\end{document}